\newtheorem*{lem2*}{Lemma 1.2*}
\newtheorem*{lem1*}{Lemma 1.1*}
\newtheorem{mthm}{Theorem}
\newtheorem{mcor}[mthm]{Corollary}
\newtheorem{thm}[mthm]{Theorem}
\newtheorem{cor}[mthm]{Corollary}
\newtheorem{lem}[mthm]{Lemma}
\theoremstyle{remark}
\def\R{{\mathbb R}}
\def\Q{{\mathbb Q}}
\newcommand*{\dif}{\mathop{}\!\mathrm{d}}             
\def\lin{\operatorname{lin}}
\def\relint{\operatorname{relint}}
\newcommand{\GL}[2]{\mathrm{GL}_{#1}(#2)}                 
\newcommand{\SLn}{\mathrm{SL}(n)}                  
\newcommand{\sln}{\mathrm{SL}(n)}                  
\newcommand{\gln}{\mathrm{GL}(n)}                  
\newcommand{\Sn}{S^{n-1}}                  
\newcommand{\kik}[1]{\mathbbm{1}_{#1}}                  
\def\MP{\mathcal{P}}
\def\MK{\mathcal{K}}
\def\MS{\mathcal{S}}
\def\MA{\mathcal{A}}
\def\MT{\mathcal{T}}
\newcommand\CF[1]{C(\R^{#1})}                                    
\newcommand\CFr[1]{C_r(\R^{#1}\setminus \{o\})}                  
\newcommand\CFo[1]{C(\R^{#1}\setminus \{o\})}                  
\newcommand{\ACF}[1]{AC_r(\R^{#1}\setminus \{o\})}        
\newcommand{\ro}[1]{\R^{#1}\setminus \{o\}}                
\def\MF{\mathcal{F}}
\newcommand{\FF}[1]{F(\R^{#1})}                     
\newcommand{\FFo}[1]{F(\R^{#1}\setminus \{o\})}     
\newcommand{\CSF}[1]{C^\infty(\R^{#1})}                   
\newcommand{\cms}{\mathcal{M}^c(\R)}                   
\newcommand\mpo[1]{\MP_{o}^{#1}}
\def\MTon{\MT_o^n}
\newcommand\ab[1]{\left(#1\right)}        
\newcommand\eutr[2]{E^{#1}_{#2}}       
\newcommand\face[2]{\MF^{#1}_{#2}}
\def\a{\alpha}
\def\e{\epsilon}
\def\lt{\mathcal{L}}
\newcommand{\subjclass}[2][1991]{%
  \let\@oldtitle\@title%
  \gdef\@title{\@oldtitle\footnotetext{#1 \emph{Mathematics subject classification.} #2}}%
}
\newcommand{\keywords}[1]{%
  \let\@@oldtitle\@title%
  \gdef\@title{\@@oldtitle\footnotetext{\emph{Key words and phrases.} #1.}}%
}
\title{\bf{$\sln$ covariant function-valued valuations}}
\author{Jin Li}
\affil{Institut f\"{u}r Diskrete Mathematik und Geometrie, Technische Universit\"{a}t Wien, Vienna, Austria\\\href{mailto: Jin Li<lijin2955@gmail.com>}{lijin2955@gmail.com}}
\date{}
\subjclass[2020]{52B45, 52A20, 52B11, 44A10}
\keywords{Valuation, $\sln$ covariance, moment function, section, Laplace transform, Euler-type relation}
\begin{document}

\maketitle

\begin{abstract}
Classifications of $\sln$ covariant function-valued valuations are established with some assumptions of continuity.
New valuations, for example, weighted moment functions, are introduced and our classifications give unified characterizations of the Laplace transform on convex bodies, $L_p$ moment bodies, $L_p$ difference bodies, and polar $L_p$ moment bodies ($L_{-p}$ intersection bodies).
Using the new classifications, we also establish some Euler-type relations.
\end{abstract}

\section{Introductions}
Let $\MK^n$ be the space of convex bodies (i.e., compact convex sets) in $n$-dimensional Euclidean space $\R^n$.
A valuation is a map $Z$ from a subspace of $\MK^n$ to an Abelian semigroup $\langle \MA, +\rangle$ such that
\begin{align}\label{val100}
Z K + Z L = Z (K \cup L) + Z(K \cap L),
\end{align}
whenever $K,L, K \cup L, K \cap L$ are contained in this subspace.
Started with the celebrated Hadwiger characterization theorem of intrinsic volumes, the theory of valuations has attracted great interest, especially in the last decades; see \cite{AW2015AFI,SW2015even,Ale99,Ale01,ABS2011harm,BF2011herm,CLM2017Min,HP14a,HP14b,HS2014loc,Kla95,Lud05,Lud06,LR10,SW2015mink,Sch2010,Joch2015comb}.
A valuation $Z$ taking values in a function space is a function-valued valuation if the addition in \eqref{val100} is the ordinary addition of functions.

This paper aims to give classifications of $\sln$ covariant function-valued valuations, only with some assumptions of continuity.
Meanwhile, we find new valuations, for example, weighted moment functions.
Our classifications establish unified characterizations of the Laplace transform on convex bodies \cite{LM2017Lap}; $L_p$ moment bodies and $L_p$ difference bodies for $p \ge 1$ ($L_p$ Minkowski valuations) \cite{Lud05,Hab12b,Wan11,Par14b,LL2016LpMV}; and polar $L_p$ moment bodies for $p>-1$ \cite{Hab09}.
The polar $L_{p}$ moment bodies are also called $L_{-p}$ intersection bodies which are characterized in \cite{HL06} for $0<-p<1$.
Polar $L_0$ moment bodies \cite{MR2244603} are not characterized in \cite{Hab09}, but they are also included in our classifications.
Above body-valued valuations are embedded in the space of function-valued valuations by their representations: support functions (for convex bodies) and gauge functions (for star bodies).

Let $\FF{n}$ be the space of all functions $f: \R^n \to \R$.
The special linear group is denoted by $\sln$.
A map $Z: \MK^n \to \FF{n}$ is $\sln$ covariant if
\begin{align*}
Z(\phi K)(x)=ZK(\phi^t x)
\end{align*}
for any $\phi \in \sln$ and $x \in \R^n$, where $\phi^t$ is the transpose of $\phi$.
If $Z: \MK^n \to \FF{n}$ is an $\sln$ covariant valuation, then the map $K \mapsto ZK(o)$ is an $\sln$ invariant real-valued valuation.
Classifications of $\sln$ invariant real-valued valuations were known before; see, e.g. \cite{LR2017sl} (or Corollary \ref{co:real}).
Hence we only deal with $\sln$ covariant valuations with values in $\FFo{n}$: the space of all functions $f: \ro{n} \to \R$.

Let $\mpo{n}$ be the space of polytopes containing the origin.
A signed Radon measure is called continuous if the measure of every singleton is $0$.
Equivalently, the (Lebesgue) decomposition of this measure only gives an absolutely continuous part and a singularly continuous part.
We denote by $\cms$ the space of signed and continuous Radon measures on $\R$.
Let $Z$ map a subspace of $\MK^n$ to $\FFo{n}$.
We call $Z$ continuous if $ZK_i (x) \to ZK(x)$ for every $x\in \ro{n}$ whenever $K_i,K$ are contained in this subspace and $K_i \to K$ with respect to the Hausdorff metric.

\begin{mthm}\label{mthm:sln3}
Let $n \ge 3$. A map $Z: \mpo{n} \to \FFo{n}$ is a continuous and $\sln$ covariant valuation
if and only if
there are $\zeta \in \CF{}$ and $\mu \in \cms$ such that
\begin{align*}
ZP(x)&= \zeta(h_P(x)) +\zeta(-h_{-P}(x)) + \frac{1}{|x|} \int_{\R} V_{n-1}(P \cap H_{x,t}) d\mu(t)
\end{align*}
for every $P \in \mpo{n}$ and $x \in \ro{n}$.
\end{mthm}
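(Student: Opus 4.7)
The plan is to first verify the ``if'' direction---that each of the three summands defines a continuous $\sln$ covariant valuation on $\mpo{n}$. The terms $\zeta(h_P(x))$ and $\zeta(-h_{-P}(x))$ are continuous in $P$ because the support function is, and $\sln$ covariance follows from $h_{\phi P}(x) = h_P(\phi^t x)$; the valuation property holds for the pair (rather than for each summand separately) because on any $K, L$ with $K \cup L$ convex the support function of the union agrees with $\max(h_K, h_L)$ in every supporting direction, so the needed cancellation appears when combined with the symmetric $-h_{-P}$ term. The slicing integral is a valuation for each fixed $(x,t)$ since $V_{n-1}$ is, continuity of $\mu$ absorbs the $(n-1)$-dimensional boundary slices that can appear under Hausdorff convergence, and $\sln$ covariance follows from $\phi P \cap H_{x,t} = \phi(P \cap H_{\phi^t x, t})$ together with $\det \phi = 1$.

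For the classification, I fix $x \in \ro{n}$ and study the continuous real-valued valuation $f(P) = ZP(x)$ on $\mpo{n}$, which by $\sln$ covariance is invariant under the stabilizer $H_x = \{\phi \in \sln : \phi^t x = x\}$. Up to $\sln$ covariance I may take $x = \alpha e_1$ and later recover the $1/|x|$ factor by a separate scaling argument using diagonal matrices $\operatorname{diag}(\alpha, \beta, \ldots, \beta)$ with $\alpha \beta^{n-1} = 1$. The subgroup $H_{e_1}$ acts on $e_1^\perp$ essentially as $\mathrm{GL}(n-1)$ up to a determinant constraint, so it permits free rescaling and permutation of the directions $e_2, \ldots, e_n$. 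I would then test $f$ on the family of simplices with vertex set $\{o, \lambda e_1, e_2, \ldots, e_n\}$ together with their translates along the $e_1$-axis that still contain $o$. Applying the valuation property to decompositions of these simplices along hyperplanes $H_{e_1, t}$ yields a Cauchy-type functional equation for $f$ as a function of $\lambda$, and continuity promotes the solutions to continuous ones. By comparing simplices in which $o$ sits at a vertex with those in which $o$ lies on (or near) the opposite facet, one isolates two distinct contributions: a ``support'' piece depending only on $h_P(e_1)$ and $-h_{-P}(e_1)$ (yielding $\zeta$ evaluated on non-negative and non-positive arguments respectively), and a ``slicing'' piece given by integration of $V_{n-1}(P \cap H_{e_1, t})$ against an unknown signed Radon measure $\mu$ on $\R$.

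The main obstacle is establishing the measure-theoretic regularity of $\mu$, specifically the requirement $\mu \in \cms$ (no point masses). The argument proceeds via continuity of $Z$ against sequences of polytopes whose slices gain or lose positive $(n-1)$-volume: a point mass of $\mu$ at some $t_0$ would contradict pointwise continuity of $ZP_i(x) \to ZP(x)$ along any sequence $P_i \to P$ in which $P \cap H_{x, t_0}$ is an $(n-1)$-dimensional facet while the slices of the $P_i$ at height $t_0$ degenerate to lower dimension. Once $\mu \in \cms$ is in hand, extension from simplices to all of $\mpo{n}$ proceeds by the standard cone decomposition: every polytope containing the origin can be written as a union of simplices with $o$ as a vertex, and repeated application of the valuation property together with continuity reduces $ZP$ to a sum of contributions already known to have the claimed form. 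The dimension hypothesis $n \ge 3$ enters both in guaranteeing the richness of $H_x$ needed for the reduction to structured simplices, and in preventing extra low-dimensional invariants (such as moment-type terms on the line spanned by $x$, which are specific to $n = 2$) from appearing in the classification.
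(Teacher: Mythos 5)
Your outline is sound on the easy parts, but it passes over the actual crux of the ``only if'' direction: nothing in the proposal produces the measure $\mu$. You assert that applying the valuation property to dissections of simplices and ``comparing'' configurations isolates ``a slicing piece given by integration of $V_{n-1}(P\cap H_{e_1,t})$ against an unknown signed Radon measure,'' but a Cauchy functional equation plus continuity only yields homogeneity of the simple part (namely $Z(sT^n)(s^{-1}x)=s^nZT^n(x)$); it does not give an integral representation. In the paper this step is the heart of the proof: the dissection identities give difference-quotient relations which show, by induction, that $g(r)=r^nZT^n(re_1)$ is $(n-1)$ times continuously differentiable with $g^{(k)}(r)\to 0$ as $r\to 0$; then the structure theorem for BV functions applied to $g^{(n-1)}$ produces a (automatically continuous, hence atom-free) measure $\mu$, and an $(n-1)$-fold integration by parts identifies $Z(sT^n)(re_1)$ with $\frac{1}{|r|}\int_{\R}V_{n-1}(sT^n\cap H_{re_1,t})\,d\mu(t)$; finally a uniqueness lemma (values on the simplices $sT^d$ determine the valuation) transfers this to all of $\mpo{n}$. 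None of these ingredients appear in your proposal, and your atom-exclusion argument, while reasonable, presupposes the representation whose existence is the missing step. Moreover, the specific dissection you propose, cutting along the hyperplanes $H_{e_1,t}$, leaves the class of test bodies: one of the two pieces of $[o,\lambda e_1,e_2,\dots,e_n]$ is neither a dilate nor a linear image of a simplex with vertex at the origin, so the ``Cauchy-type functional equation in $\lambda$'' does not close. The paper instead cuts by hyperplanes through the origin containing $\lambda e_1+(1-\lambda)e_2,e_3,\dots,e_n$, which splits $T^d$ into two $\mathrm{GL}(n)$-images of $T^d$ and is what makes the functional-equation machinery work.

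A second genuine gap concerns the lower-dimensional polytopes. Besides the support-function terms there are further $\sln$ covariant valuations supported there, namely $P\mapsto \eutr{-}{\zeta_2}(P)(x)+\eutr{-}{\zeta_2^R}(-P)(x)$ (alternating sums of $\zeta_2(h_F(x))$ over the faces $F$ containing the origin), which exist even for continuous $\zeta_2$ and satisfy every covariance and functional-equation constraint you list; they are eliminated only by exploiting continuity of $Z$ in $P$ along a specific degenerating family (the paper uses $P_r=[-te_1,e_1]+[-e_2,re_2]$ with $r\to 0$ to force $\zeta_2\equiv 0$). Your step ``comparing simplices in which $o$ sits at a vertex with those in which $o$ lies on the opposite facet'' does not account for this family, so the claim that exactly a support piece and a slicing piece remain is unsupported as stated. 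A minor point on the ``if'' direction: each of $P\mapsto\zeta(h_P(x))$ and $P\mapsto\zeta(-h_{-P}(x))$ is separately a valuation, since $h_{K\cup L}=\max\{h_K,h_L\}$ and $h_{K\cap L}=\min\{h_K,h_L\}$ whenever $K\cup L$ is convex; no cancellation between the two summands is needed.
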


Here $h_P$ is the support function of $P$, and $V_{n-1}(P \cap H_{x,t})$ is the section volume (that is, the $(n-1)$-dimensional volume) of $P \cap H_{x,t}$, where $H_{x,t}=\{y\in\R^n: x\cdot y=t\}$.
The function $h_P^p+h_{-P}^p$ is the $p$-th power of the support function of $L_p$ difference body.
If $\mu$ has a locally integrable density $\zeta \in \FF{}$, by Fubini's theorem,
\begin{align}\label{eq:fubini}
\frac{1}{|x|}\int_{\R} V_{n-1}(K \cap H_{x,t}) \zeta(t) dt
=\int_{K} \zeta(x \cdot y) dy=:M_\zeta K(x)
\end{align}
for any $x \in \ro{n}$ and $K \in \MK^n$.
If $\zeta(t)=e^{-t}$ for $t \in \R$, then $M_\zeta K$ is the Laplace transform of $K$.
If $\zeta(t)=|t|^p$, then $M_\zeta K ^{1/p}$ is the support function of the $L_p$ moment body for $p \ge 1$ and the gauge function of the polar $L_p$ moment body ($L_{-p}$ intersection body) for $p>-1$, $p \neq 0$.
If $\zeta(t)=\log |t|$ for $t \neq 0$ and $0$ for $t=0$, then $\log M_\zeta K$ is the gauge function of the polar $L_0$ moment body.
We call $M_\zeta K$ the weighted moment function of $K$.
See \S \ref{sec:notations} for details.

Removing the continuity of $Z$ in Theorem \ref{mthm:sln3}, we obtain additional valuations even for valuations with values in $\CFr{n}$: the space of continuous functions $f$ on $\ro{n}$ such that $\lim_{r \to 0} rf(rx)=0$ for any $x \in \ro{n}$.
For a function $\zeta$ on $\R$, let $\zeta^R$ denote its reflection, that is, $\zeta^R (r)=\zeta(-r)$ for any $r\in\R^n$.
Let $\MP^n$ denote the space of all polytopes in $\R^n$.
We say a valuation $Z:\MP^n \to \CFr{n}$ is regular if, for any $P \in \MP^n$ and $x \in \ro{n}$, the function $s \mapsto Z(sP)(s^{-1}x)$ is regular: bounded or measurable on any compact interval $I \subset (0,\infty)$.
All assumptions of the following theorem are fulfilled for continuous and $\sln$ covariant $\FFo{n}$-valued valuations. 
Also, $f \in \CF{n}$ implies $\lim_{r \to 0} rf(ru)=0$ for any $u \in \Sn$.

\begin{mthm}\label{mthm:sln}
Let $n \ge 3$. A map $Z: \mpo{n} \to \CFr{n}$ is a regular and $\sln$ covariant valuation
if and only if
there are $\zeta_1,\zeta_2 \in \CF{}$ and $\mu \in \cms$ such that
\begin{align*}
ZP(x)&= \zeta_1(h_P(x)) +\zeta_1^R(h_{-P}(x)) + \eutr{-}{\zeta_2}(P)(x) + \eutr{-}{\zeta_2^R}(-P)(x) +\frac{1}{|x|} \int_{\R} V_{n-1}(P \cap H_{x,t}) d\mu(t)
\end{align*}
for every $P \in \mpo{n}$ and $x \in \ro{n}$.
\end{mthm}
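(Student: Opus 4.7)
The plan is to reduce to a one-dimensional problem by fixing the argument. For $x\in\ro{n}$ choose $\phi_x\in\sln$ with $\phi_x^t e_n=x$; then $\sln$-covariance gives $ZP(x)=Z(\phi_x P)(e_n)$, so it suffices to determine the scalar functional $\bar Z P:=ZP(e_n)$. This $\bar Z$ is a regular real-valued valuation on $\mpo{n}$, and by covariance it is invariant under the stabilizer $G=\{\phi\in\sln:\phi^t e_n=e_n\}$, which preserves the $x_n$-coordinate. The natural $G$-invariants of $P\in\mpo{n}$ are $h_P(e_n)$, $h_{-P}(e_n)$, the slice volumes $t\mapsto V_{n-1}(P\cap H_{e_n,t})$, and face-combinatorial data at the extremal levels, so $\bar Z$ must be built from these.

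I would then test $\bar Z$ on simplices of the form $[o,e_1,\ldots,e_{n-1},ae_n+bv]$ with $v\in e_n^\perp$ and on their $G$-orbits. Slicing such a simplex by $H_{e_n,t}$ and using the valuation identity produces Cauchy-type functional equations in the scaling parameters. Solving these in the regular class separates three contributions: a top term $\zeta_1(h_P(e_n))$ carried by the $e_n$-maximal face; its mirror $\zeta_1^R(h_{-P}(e_n))$ obtained by applying the same analysis to $-P$; and an interior term that, by a Riesz-type representation on $\R$, takes the form $\int V_{n-1}(P\cap H_{e_n,t})\,d\mu(t)$ for a signed Radon measure $\mu$. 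The prefactor $|x|^{-1}$ enters upon undoing the reduction, since the Jacobian of $\phi_x|_{H_{x,0}}\to H_{e_n,0}$ is $|x|$. That $\mu$ is continuous (no point masses) comes directly from regularity: a point mass at $t_0$ would cause a non-regular jump as a vertex of $P$ slides through $\{x_n=t_0\}$.

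The extra terms $\eutr{-}{\zeta_2}(P)+\eutr{-}{\zeta_2^R}(-P)$ arise precisely because the hypothesis is regularity, not continuity. Regular solutions of the Cauchy equation governing the top/bottom pieces strictly contain the continuous ones, and the additional discontinuous-but-regular solutions account, through an Euler-type face sum, for contributions concentrated on the faces of $P$ visible from $e_n$. I would isolate them by subtracting the continuous part, which is characterised exactly by Theorem \ref{mthm:sln3}, and then showing, via the valuation identity on face-splits of test simplices, that the residual equals $\eutr{-}{\zeta_2}(P)+\eutr{-}{\zeta_2^R}(-P)$.

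The main obstacle is disentangling the three families, since each can mimic the others on symmetric polytopes. I would address this by running the analysis on asymmetric simplices: with all vertices in $\{x_n>0\}$ (killing the bottom and most of the slice contribution), with all vertices in $\{x_n<0\}$ (symmetric), and within a thin slab $\{|x_n|<\varepsilon\}$ (killing both top and bottom), in order to extract each ingredient separately. The resulting formula extends from these test simplices to all $P\in\mpo{n}$ by cone/triangulation together with the valuation property, and then from $e_n$ to arbitrary $x$ by the reduction of the first paragraph.
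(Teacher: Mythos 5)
Your opening reduction (evaluating at $e_n$, exploiting the stabilizer of $e_n$ in $\sln$, and generating Cauchy-type equations from hyperplane dissections of simplices) is in the same spirit as the paper, but the decisive analytic steps are missing rather than merely compressed. Asserting that the ``natural $G$-invariants'' are $h_P(e_n)$, $h_{-P}(e_n)$, the slice volumes and face data, ``so $\bar Z$ must be built from these,'' assumes exactly what has to be proved; and the interior term cannot be obtained ``by a Riesz-type representation on $\R$,'' because there is no linear functional on a function space in sight until one has shown that the simple part of $Z$ depends on $P$ only through $t\mapsto V_{n-1}(P\cap H_{e_n,t})$, linearly and with the right continuity — which is the heart of the matter. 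In the paper this is Lemma \ref{lem:simple1}: the valuation identity yields the difference relations \eqref{eq:dif1}--\eqref{eq:dif2}, hence $g(r)=r^nZT^n(re_1)$ has $n-1$ continuous derivatives vanishing at $0$ (here the codomain $\CFr{n}$ is used), the measure $\mu$ is then produced by the structure theorem for BV functions, the homogeneity $Z(sT^n)(s^{-1}x)=s^nZT^n(x)$ needed for this comes from regularity via the Cauchy equation in Lemma \ref{lem:fe1}, and Lemma \ref{lemuq} transfers the formula from the simplices $sT^d$ to all of $\mpo{n}$. Your sketch replaces this chain by a phrase, so the existence of $\mu$ is not established.

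The two heuristic explanations you do give are also wrong in mechanism. The terms $\eutr{-}{\zeta_2}(P)+\eutr{-}{\zeta_2^R}(-P)$ are not carried by ``discontinuous-but-regular'' solutions of the Cauchy equations: in the statement $\zeta_2\in \CF{}$ is continuous, and regularity is precisely what kills pathological Cauchy solutions. These terms persist because no continuity of $Z$ in the body is assumed; they are the extra freedom on lower-dimensional polytopes recorded in Theorem \ref{thm:low}, and they disappear only under Hausdorff continuity (Lemma \ref{lem:contin2}). For the same reason ``subtracting the continuous part characterized by Theorem \ref{mthm:sln3}'' is not available: $Z$ is not continuous, there is no a priori ``continuous part,'' and in the paper the logical order is the reverse (Theorem \ref{mthm:sln3} is deduced from Theorem \ref{thm:low} and Theorem \ref{mthm:simpsln}). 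Likewise, an atom of $\mu$ is not excluded by regularity via ``a vertex of $P$ sliding through $\{x_n=t_0\}$'' — that argument varies the body, i.e.\ invokes continuity in $P$, which is not a hypothesis. Indeed $P\mapsto \frac{1}{|x|}V_{n-1}(P\cap H_{x,t_0})$ is a simple, $\sln$ covariant, regular valuation; what fails is membership of its values in $\CFr{n}$ (discontinuity in $x$, and for $t_0=0$ the $(-1)$-homogeneity violates $\lim_{r\to 0}rf(rx)=0$), which is how the paper rules atoms out. Finally, your separating test families are not well posed on $\mpo{n}$: a polytope containing the origin cannot have all vertices in $\{x_n>0\}$ (or all in $\{x_n<0\}$), and a thin slab does not annihilate the $\zeta_1$ contributions, so the lower-dimensional analysis (Lemmas \ref{lem1} and \ref{lem:codim}, Theorem \ref{thm:low}) still has to be carried out explicitly.
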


For $P \in \mpo{n}$, the new valuation $\eutr{-}{\zeta}P$ is defined by
$$\eutr{-}{\zeta} P(x):= \sum_{F \in \face{-}{} (P)} (-1)^{\dim F} \zeta(h_{F}(x)),$$
where $\dim F$ is the dimension of $F$ and $\face{-}{}(P)$ is the class of faces (including $P$ itself) of $P$ which contain the origin.
It is related to Euler-type relations of support functions (Corollary \ref{co:eusupp}).
For more information on $\eutr{-}{\zeta}$ and related functionals, see \S \ref{sec:notations}.

For certain discontinuous $\zeta$, the valuation $P \mapsto \zeta(h_P)$ is still important in convex geometry.
One example is the characteristic function of the polar body.
Hence, we also characterize the valuations $\zeta(h_P)$ and $\eutr{-}{\zeta}(P)$ for all $\zeta \in \FF{}$; see \S \ref{sec:co0} for details.

Dual theorems, that is, classifications of $\sln$ contravariant $\CF{n}$-valued valuations, are established in \cite{Li2018AFV}. $\sln$ contravariant $\CF{n}$-valued valuations are related to Orlicz mixed volumes introduced in \cite{GHW14,XJL14}.
Our classification of $\sln$ covariant function-valued valuations might also contribute to the recently developing Orlicz Brunn-Minkowski theory and the $L_p$ Brunn-Minkowski theory for $p<1$ \cite{LYZ10a,LYZ10b,BLYZ12,BLYZ13,LR10,GHW14,XJL14}.

Recently, many interesting theorems for valuations on functions were established by extending the corresponding theorems for valuations on convex bodies; e.g. \cite{MR4070303,Lud12,Ale2017MAo,CLM2017Min,LM2017Lap,CLM2019homogeneous,MR3897436}.
Our results will also be used to give a characterization of the Legendre transform and the Laplace transform on convex functions; see \cite{Li2020LLV}.

The paper is organized as follows. In \S \ref{sec:co0}, we give a classification of valuations which are $\GL{+}{n}$ (or $\GL{}{n}$) covariance of weight $0$ so that the assumptions of continuity  or regularity in Theorem \ref{mthm:sln3} and Theorem \ref{mthm:sln} can be removed.
Notation and background information are collected in \S \ref{sec:notations}. We also give additional examples of valuations that do not take values at continuous functions.
In \S \ref{sec:val}, we prove the ``necessarity" of our theorems, that is, the ``if" part, that those operators are valuations which satisfy the desired properties, e.g., $\sln$ covariance, continuity.
The sections \ref{sec:fe}-\ref{sec:mainpr} are contributed to the proofs of the ``sufficiency", the ``only if" part.
Using our classifications, we establish new Euler-type relations and give new proofs for Euler-type relations introduced by Shephard \cite{MR232280}
and for local Euler-type relations introduced by Kabluchko, Last, and Zaporozhets \cite{MR3679943}; see \S \ref{sec:euler}.
Classifications of valuations on $\MP^n$ and $\MK^n$ are given in \S \ref{sec:genpoly}.
We pick up some explicit characterizations of some operators as applications of our theorems in the last section.
Some of them are known before and some of them are new.

\section{$\GL{+}{n}$ covariant valuations of weight $0$}\label{sec:co0}
The general linear group is denoted by $\GL{}{n}$ and the group of non-degenerate linear transforms with positive determinants by $\GL{+}{n}$.
A map $Z:\MK^n \rightarrow \FF{n}$ is $\GL{+}{n}$ (or $\GL{}{n}$) covariant of weight $0$ if
\begin{align*}
Z(\phi K)(x)= ZK ( \phi^{t}x)
\end{align*}
for any $\phi \in \GL{+}{n}$ (or $\GL{}{n}$, respectively) and $x \in \R^n$.
If $\zeta \in \FF{}$ is not continuous, the functions $\eutr{-}{\zeta}P$ and $\zeta(h_P)$ are not continuous in general.
We can characterize them by assuming that they are $\GL{+}{n}$ covariant of weight $0$.

\begin{mthm}\label{thm:co0}
Let $n \ge 3$.  A map $Z: \mpo{n} \to \FFo{n}$ is a $\GL{+}{n}$ covariant valuation of weight $0$ if and only if
there are $\zeta_1,\zeta_2 \in \FF{}$
such that
\begin{align}\label{val:co0}
ZP(x)=&\zeta_1(h_P(x)) +\zeta_1^R(h_{-P}(x)) + \eutr{-}{\zeta_2}(P)(x) + \eutr{-}{\zeta_2^R}(-P)(x)
\end{align}
for every $P \in \mpo{n}$ and $x \in \R^n$.

A map $Z: \mpo{2} \to \FFo{2}$ is a $\GL{}{2}$ covariant valuation of weight $0$ if and only if \eqref{val:co0} holds for $n=2$.
\end{mthm}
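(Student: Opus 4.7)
The \emph{if} direction is a direct verification. The identity $h_{\phi P}(x)=h_P(\phi^t x)$ shows that $P\mapsto \zeta_1(h_P(x))$ and $P\mapsto \zeta_1^R(h_{-P}(x))$ are $\GL{+}{n}$ covariant of weight $0$, and since $\phi\in\GL{+}{n}$ sends origin-containing faces of $P$ to origin-containing faces of $\phi P$ (preserving dimension), the Euler-type summands are covariant too. The valuation identity reduces, for the support-function terms, to the multiset equality $\{h_P(x),h_Q(x)\}=\{h_{P\cup Q}(x),h_{P\cap Q}(x)\}$ that holds whenever $P\cup Q$ is convex; for the Euler-type terms, to the corresponding additivity of the signed sum $\sum_F (-1)^{\dim F}$ over origin-containing faces under convex union.

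For the \emph{only if} direction, fix $x\in\ro{n}$ and set $\psi_x\colon\mpo{n}\to\R$, $\psi_x(P):=Z(P)(x)$. This is a real-valued valuation on $\mpo{n}$, and the covariance gives $\psi_x(\phi P)=\psi_{\phi^t x}(P)$. Normalizing $x=e_n$ by $\GL{+}{n}$ covariance, $\psi_{e_n}$ becomes invariant under the stabilizer $G_{e_n}=\{\phi\in\GL{+}{n}:\phi^t e_n=e_n\}$, which is the ``affine-type'' subgroup of block matrices $\bigl(\begin{smallmatrix}A&b\\0&1\end{smallmatrix}\bigr)$ with $A\in\GL{+}{n-1}$ and $b\in\R^{n-1}$, acting on $\R^n$ by $(y',y_n)\mapsto(Ay'+by_n,y_n)$. (The weight $0$ assumption is precisely what excludes the integral term $\tfrac{1}{|x|}\int V_{n-1}(P\cap H_{x,t})\,d\mu(t)$ that appears in Theorems~\ref{mthm:sln3} and~\ref{mthm:sln}, since such a term transforms with nonzero weight.)

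The substantive step is to classify $\psi_{e_n}$ on origin-containing simplices: by the cone decomposition of any $P\in\mpo{n}$ from the origin together with inclusion--exclusion, $\psi_{e_n}(P)$ is determined by its values on such simplices. For a simplex $T$ with vertices $o,v_1,\dots,v_n$, the $G_{e_n}$-action on the slices $\{x_n=t\}$ reduces the configuration to one determined only by the heights $(v_1)_n,\dots,(v_n)_n$. Translating the valuation identity on canonical subdivisions of these simplices into functional equations yields two arbitrary functions $\zeta_1,\zeta_2\in\FF{}$---one carried by the extremal vertices in the $\pm e_n$ direction, the other by the Euler sum over origin-containing faces---and a direct check then shows that the formula \eqref{val:co0} reproduces $\psi_{e_n}$ on all origin-containing simplices, hence on all of $\mpo{n}$.

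The main difficulty will be cleanly isolating the $\zeta_1$-contribution (depending only on the two extremal vertices) from the $\zeta_2$-contribution (depending on all origin-containing faces), especially when the extremal vertex itself lies on several origin-containing faces; without continuity, this separation must be effected combinatorially, using simplices whose face profiles in the $e_n$-direction differ in controlled ways. For $n=2$, the subgroup $\GL{+}{2}$ is too small to enforce the pairing of $\zeta_1$ with $\zeta_1^R$ (or of the two $E^-$ terms) through the stabilizer argument alone, so one must pass to $\GL{}{2}$ and use an orientation-reversing element to extract one additional functional equation; the rest of the argument then proceeds as before.
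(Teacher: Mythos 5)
Your outline follows the same general strategy as the paper (reduce to simplices with a vertex at the origin, exploit the stabilizer of $e_n$ under the covariance, turn dissection identities into functional equations), but as written it has a genuine gap: the two steps that carry all the weight are asserted rather than proved. First, in the ``only if'' direction you never actually show that after subtracting a candidate representation the remaining valuation vanishes. In the paper this is the combination of Theorem \ref{thm:low} (the explicit extraction of $\zeta_1,\zeta_2$ from the values $ZT^1(re_1)$, $ZT^2(re_1)$, $Z\{o\}(e_n)$, $ZT^1(e_n)$ via \eqref{zeta1}--\eqref{zeta2}, plus Lemma \ref{lem:unique1} showing the difference is simple) and Theorem \ref{thm:simgl0} (a simple $\GL{+}{n}$ covariant valuation of weight $0$ is identically zero, which rests on the Cauchy-functional-equation analysis of Lemma \ref{lem:fe1}(c)). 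Your parenthetical remark that ``weight $0$ excludes the integral term'' is not enough: one must rule out \emph{every} simple valuation, not just observe that one particular term fails to be covariant, and that is exactly the content of Theorem \ref{thm:simgl0}. You yourself flag the key difficulty --- separating the $\zeta_1$-contribution from the $\zeta_2$-contribution without any continuity --- and then leave it unresolved; in the paper this separation is precisely what the formulas \eqref{zeta1}, \eqref{zeta2} and Lemma \ref{lem:unique1} accomplish. Your claim that the $G_{e_n}$-orbit of a simplex is determined by the heights of its vertices also ignores the orientation constraint $\det\phi>0$, which is exactly why the $n=2$ case needs $\GL{}{2}$ and why for $n\ge 3$ one must check identities such as $ZT^d(re_n)=ZT^d(-re_n)$ (as in Lemma \ref{lem:codim}).

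Second, the ``if'' direction is also under-argued: for the Euler-type summands $\eutr{-}{\zeta_2}(P)$ the valuation property does not reduce to ``additivity of the signed sum $\sum_F(-1)^{\dim F}$''. For general $\zeta$ one has to track how the face classes of $P$, $P\cap H$, $P\cap H^{\pm}$ recombine under a hyperplane cut and use the identity $\zeta(h_F)+\zeta(h_{F\cap H})=\zeta(h_{F\cap H^+})+\zeta(h_{F\cap H^-})$ for faces crossing $H$, together with the weak-valuation criterion of Lemma \ref{lem:wval}; this is the content of Lemma \ref{lem:val1} and is not a counting statement. In short, your text is a plausible plan that mirrors the paper's architecture, but the functional-equation analysis, the vanishing of the simple part, and the combinatorial verification of the valuation property --- i.e.\ the proof itself --- are missing.
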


Let $\kik{K} (x)$ denote the characteristic function of a set $K \subset \R^n$, that is, if $x \in K$, then $\kik{K} (x)=1$, otherwise $\kik{K} (x)=0$.
Notice that the characteristic functions of polar bodies are included in the functions classified in Theorem \ref{thm:co0} since $\kik{P^\ast}(x)=\zeta(h_P(x))=\zeta(\|x\|_{P^\ast})$ for $\zeta(t)=\kik{[0,1]}(t)$.
Here $P^\ast =\{y \in \R^n : x \cdot y \leq 1\}$ and $\|x\|_{P^\ast}=\min\{\lambda >0: x \in \lambda K\}$ is the gauge function.
Also, the characteristic function of the normal cone of $P$ at the origin, $N(P,o)$, is included in the functions classified in Theorem \ref{thm:co0} by $\kik{N(P,o)}(x)=\zeta(h_P(x))$ for $\zeta(t)=\kik{0}(t)$.

\section{Preliminaries and Notation}\label{sec:notations}
We refer to Schneider \cite{Schb2} as a general reference for convex geometry and valuations theory.

Let $x \cdot y$ denote the inner product of $x,y \in \R^n$.
The Euclidean norm of $x \in \R^n$ is denoted by $|x|$.
The support function of a convex body $K$ is
\begin{align*}
h_K(x)=\max_{y \in K} \{x\cdot y\},~~x\in\R^n.
\end{align*}
The Hausdorff distance of $K,L \in \MK^n$ is $\delta(K,L)=\max_{u \in \Sn} |h_K(u) -h_L(u)|$.
The gauge function $\|x\|_{K^\ast}=h_K(x)$ if $K \in \MK^n$ contains the origin.

For $p \ge 1$, the $L_p$ moment body $M_p K$ of a convex body $K$ containing the origin, is defined by
\begin{align}\label{def:pmom}
h_{M_p K}(x)^p=\int_K |x \cdot y|^p dy, ~~x\in \R^n.
\end{align}
The $L_p$ moment body is a fundamental operator in convex geometry.
For the related $L_p$ affine isoperimetric inequality, see \cite{LYZ00a,LYZ00b,HS09a}.
It is called an $L_p$ Minkowski valuation since it is a valuation with respect to the $L_p$ Minkowski addition $h_{K+_p L}^p:=h_K^p+h_L^p$.
The volume-normalized $L_p$ moment bodies are call $L_p$ centroid bodies.

For $-1<p<1$, the function $(\int_K |x \cdot y|^p dy)^{1/p}$ is not a support function anymore.
But the polar $L_p$ moment body $M_p^\ast K=(M_p K)^\ast$, a star-shaped body, is still interesting.
The gauge function of $M_p^\ast K$ is defined by
\begin{align}\label{def:polarmom}
\|x\|_{M_p^\ast K}^p=\int_K |x \cdot y|^p dy, ~~x\in \R^n.
\end{align}
for $p>-1$, $p \neq 0$, and
\begin{align*}
V(K)\log \|x\|_{M_0^\ast K}=\int_K \log |x \cdot y| dy, ~~x\in \R^n.
\end{align*}
for $p=0$.
Polar $L_{-p}$ moment bodies are also called $L_p$ intersection bodies since
$$\lim_{p \uparrow 1}\Gamma(1-p) \|x\|_{M_{-p}^\ast K}^{-p} = \|x\|_{IK}^{-1},$$
where $\Gamma$ denotes the Gamma function; see \cite{Hab2008}.
Here $IK$ is the intersection body of $K$ which is defined by
\begin{align*}
\|x\|_{IK}^{-1}=\frac{1}{|x|}V_{n-1}(K \cap H_{x,0}).
\end{align*}
$L_p$ Busemann-Petty problems are studied in \cite{MR2244603,MR1658156}.

The Laplace transform $\lt K$ of a convex body $K$ is
\begin{align}\label{def:lap}
\lt K(x)=\int_{K} e^{-x \cdot y} dy,  ~~x\in \R^n.
\end{align}
Making use of the logarithmic version of this transform,
Klartag \cite{Kla06} found so far the best estimate on Bourgain's slicing problem, which is one of the main open problems in asymptotic geometry.
Also, the characterization of the Laplace transform on convex bodies is fundamental in the characterization of the classic Laplace transform on Lebesgue functions; see \cite{LM2017Lap}.

Comparing \eqref{def:pmom} with \eqref{def:polarmom} and \eqref{def:lap}, we are led to define the weighted moment function
\begin{align*}
M_\zeta K(x):=\int_K \zeta(x \cdot y) dy, ~~x\in \R^n, K \in \MK^n
\end{align*}
for a locally integrable function $\zeta: \R \to \R$.
Inspired by \eqref{eq:fubini}, we can further define the more general valuation:
\begin{align*}
M_\mu(K)(x)= \frac{1}{|x|} \int_{\R} V_{n-1}(K \cap H_{x,t}) d\mu(t), ~~x \in \ro{n}
\end{align*}
where $\mu$ is a signed Radon measure.
If $\mu$ is the Dirac-delta measure concentrated at $0$, then $M_\mu(K)=\|x\|_{IK}^{-1}$, which is characterized by Ludwig \cite{Lud06}.

We only characterize $M_{\mu}K$ for continuous measures since it is not continuous if $\mu$ is a discrete measure.

Another $\sln$ covariant valuation not included in our theorems is
$$K \mapsto \zeta(h_K(x)) V\left(K,x\right),$$
where $V\left(K,x\right)$ is the volume of the cone $\left\{\lambda y \in \R^n:  \lambda \in [0,1],~ y\cdot x=h_K\left(x\right)\right\}$ and $\zeta \in \FF{}$.
For some homogeneous $\zeta$, this valuation is characterized by Haberl and Parapatits \cite{HP14a} as an $\sln$ covariant valuation valued in a special function space,
and is used to characterize $L_p$ surface area measures.

Let $P \in \MP^n$.
We say a set $F$ is a face of $P$ if either $F=P$ or $F=P \cap H_{u,h_P(u)}$ for some $u \in \Sn$. The normal cone $N(P,F)=\{u \in \ro{n} : F \subseteq P \cap H_{u,h_P(u)}\} \cup \{o\}$.
We denote by $\face{}{}(P)$ the class of faces of $P$, and by $\face{-}{} (P)$ ($\face{+}{} (P)$) the class of faces $F$ of $P$ such that $h_P(u) \leq 0$ ($h_P(u) \ge 0$, respectively) for all $u \in N(P,F)$.
Let $\dim K$ be the dimension of $K \in \MK^n$, i.e., the dimension of the affine hull of $K$.
For any $\zeta \in \FF{}$, we define the maps $\eutr{\pm}{\zeta},\eutr{}{\zeta} : \MP^n \to \FF{n}$ by
\begin{align}
&\eutr{\pm}{\zeta} P(x):= \sum_{F \in \face{\pm}{} (P)} (-1)^{\dim F} \zeta(h_{F}(x)), \label{defval3} \\
&\eutr{}{\zeta} P(x):= \sum_{F \in \face{}{} (P)} (-1)^{\dim F} \zeta(h_{F}(x)) \label{defval4}
\end{align}
for any $P \in \MP^n$ and $x \in \R^n$.
If $o \in P$, then $\face{+}{}(P)=\face{}{}(P)$ and $\face{-}{} (P)$ is the class of faces of $P$ which contain the origin.

For $0 \leq j \leq \dim P$, let $\face{}{j} (P)=\{F \in \face{}{}(P): \dim F=j\}$ and $\face{\pm}{j} (P)=\{F \in \face{\pm}{}(P): \dim F=j\}$
Also, let $|\face{}{j} (P)|$ and $|\face{\pm}{j} (P)|$ be their cardinalities.

The convex hull of a set $A$ is denoted by $[A]$ and the convex hull of a set $A$ and a point $x \in \R^n$ will be briefly written as $[A,x]$ instead of $[A,\{x\}]$.

\section{Properties of the classified valuations}\label{sec:val}
We call a valuation simple if it vanishes on lower-dimensional sets.
Let $\CSF{n}$ be the space of infinitely differentiable functions on $\R^n$.
Denote by $\ACF{n}$ the space of continuous functions $f:\ro{n} \to \R$ satisfying that, for any $u \in \Sn$, \\
1. the function $r \mapsto rf(ru)$ is absolutely continuous on $(0,s]$ for any $s>0$; \\
2. $\lim_{r \to 0}r f(ru)=0$ for any $u \in \Sn$.

\begin{lem}\label{lem:momf}
Let $\mu \in \cms$.
The map $Z$ defined on $\MK^n$ by
\begin{align}\label{eq:meas1}
ZK(x)=\frac{1}{|x|}\int_{\R} V_{n-1}(K \cap H_{x,t}) d\mu(t),~~K \in \MK^n,~~x\in \ro{n},
\end{align}
is a continuous, simple, $\sln$ covariant $\CFr{n}$-valued valuation.

Moreover, if $\mu$ has a density $\zeta \in \FF{}$, then
\begin{align}\label{eq:ac1}
ZK(x)=\int_{K} \zeta(x \cdot y) dy,
\end{align}
and $ZK \in \ACF{n}$ for every $K \in \MK^n$.
Also, if $\zeta \in \CSF{}$, \eqref{eq:ac1} can be defined also at $x=o$ and $ZK \in \CSF{n}$ for every $K \in \MK^n$.
\end{lem}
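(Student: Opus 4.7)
The plan is to verify the claimed properties one at a time. For each fixed $x\in\ro{n}$ and $t\in\R$, the map $K\mapsto V_{n-1}(K\cap H_{x,t})$ is a valuation on $\MK^n$, since $(K\cup L)\cap H_{x,t}$ and $(K\cap L)\cap H_{x,t}$ are again convex bodies in $H_{x,t}$ and $V_{n-1}$ is a valuation on that hyperplane. Integrating against $\mu$ and multiplying by $1/|x|$ preserves the valuation identity, so $Z$ is a valuation. For simplicity, if $\dim K<n$ then $K$ is contained in some affine hyperplane $H$; if $H$ is parallel to the family $\{H_{x,t}\}_{t\in\R}$, then $K\cap H_{x,t}$ is non-empty for exactly one value $t=t_0$, and the singleton $\{t_0\}$ has $\mu$-measure zero because $\mu$ is continuous; otherwise $H\cap H_{x,t}$ has dimension at most $n-2$, so $V_{n-1}(K\cap H_{x,t})=0$ for every $t$. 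This is precisely where the continuity hypothesis on $\mu$ enters. For $\sln$ covariance, I would use the identity $\phi K\cap H_{x,t}=\phi(K\cap H_{\phi^tx,t})$ together with the Jacobian computation
\[
V_{n-1}\bigl(\phi(K\cap H_{\phi^tx,t})\bigr)=\frac{|x|}{|\phi^tx|}\,V_{n-1}(K\cap H_{\phi^tx,t}),
\]
which follows from $\det\phi=1$ and the fact that the restriction of $\phi$ to the hyperplane $H_{\phi^tx,t}\to H_{x,t}$ has $(n-1)$-volume Jacobian equal to the ratio of normal-vector lengths; substituting and absorbing $|x|/|\phi^tx|$ into the prefactor $1/|x|$ produces $1/|\phi^tx|$ and hence $ZK(\phi^tx)$.

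For $ZK\in\CFr{n}$, continuity in $x\in\ro{n}$ and continuity of $Z$ in the Hausdorff metric are both applications of dominated convergence: the section volume $V_{n-1}(K\cap H_{x,t})$ is continuous jointly in $(K,x)$ at every $t\notin\{h_K(x),-h_K(-x)\}$, which is a $|\mu|$-null set by the atom-freeness of $\mu$, and is uniformly bounded above by the section volume of a fixed enveloping body. For the vanishing at the origin, using $H_{ru,t}=H_{u,t/r}$ one writes
\[
r\,ZK(ru)=|u|^{-1}\int_\R V_{n-1}(K\cap H_{u,t/r})\,d\mu(t),
\]
and observes that the integrand is supported on $t\in[-rh_K(-u),rh_K(u)]$ and uniformly bounded; the total variation $|\mu|$ of this shrinking neighborhood of $0$ tends to $0$ as $r\to 0^+$ because $\mu(\{0\})=0$ and $|\mu|$ is Radon.

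In the density case $d\mu=\zeta\,dt$, the equality $ZK(x)=\int_K\zeta(x\cdot y)\,dy$ is just the coarea formula applied to $y\mapsto x\cdot y$, whose gradient has constant norm $|x|$. To promote $ZK$ to a member of $\ACF{n}$ I only need absolute continuity of $F(r)=r\,ZK(ru)$ on $(0,s]$; writing $F(r)=\int_\R g(t/r)\zeta(t)\,dt$ with $g(s)=V_{n-1}(K\cap H_{u,s})/|u|$ continuous, bounded, and compactly supported, integration by parts in $t$ (using that $g$ is locally Lipschitz on the interior of its support by Brunn's concavity theorem) combined with Fubini identifies the increment $F(r_2)-F(r_1)$ with the integral over $[r_1,r_2]$ of a locally integrable function, yielding absolute continuity. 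Finally, if $\zeta\in\CSF{}$, then $\zeta(x\cdot y)$ and all its $x$-derivatives are uniformly bounded on $K\times\{|x|\le R\}$, so $x\mapsto\int_K\zeta(x\cdot y)\,dy$ is defined on all of $\R^n$ (equal to $\zeta(0)V(K)$ at the origin) and lies in $\CSF{n}$ by repeated differentiation under the integral.

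The main obstacle is the absolute-continuity step: for a merely locally integrable $\zeta$ with only Brunn concavity available for the section function $g$, one has to extract a locally integrable derivative of $F$ that remains well-behaved down to $r=0$, using the continuity of $\mu$ to control the limit at the origin.
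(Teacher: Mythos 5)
Your proposal is correct in substance and, for most of the claims, runs parallel to the paper's proof: the valuation property and simplicity via sections (with non-atomicity of $\mu$ killing the single exceptional hyperplane), continuity in $(K,x)$ by dominated convergence with the two boundary values of $t$ forming a $|\mu|$-null set, the limit $\lim_{r\to 0} rZK(ru)=0$ from the shrinking support interval plus $|\mu|(\{0\})=0$, Fubini/coarea for the density representation, and differentiation under the integral for $\zeta \in \CSF{}$ are all exactly the paper's steps. The one genuinely different route is $\sln$ covariance: you compute directly with $\phi K\cap H_{x,t}=\phi(K\cap H_{\phi^t x,t})$ and the hyperplane Jacobian $|x|/|\phi^t x|$ (correct: for $\det\phi=1$ the $(n-1)$-volume distortion of $\phi\colon H_{\phi^t x,t}\to H_{x,t}$ is $|\phi^{-t}u|/|u|$ with $u=\phi^t x$), whereas the paper observes covariance of the density case $\int_K\zeta(x\cdot y)\,dy$ and passes to general $\mu\in\cms$ by approximation; your computation handles all $\mu$ at once and is a clean alternative.

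The absolute-continuity step is where you both diverge from the paper and leave a soft spot. The paper controls increments of $r\mapsto rZK(ru)$ through the uniform absolute continuity of $E\mapsto\int_E|\zeta(t)|\,dt$ together with the bound $V_{n-1}(K\cap H_{ru,t})\le\max_s V_{n-1}(K\cap H_{u,s})$. In your route, local Lipschitzness of $g(s)=V_{n-1}(K\cap H_{u,s})$ on the interior of its support is not the property that makes the argument work: by Brunn's theorem $g^{1/(n-1)}$ is concave on the support, so $g'$ may blow up at the endpoints and $g$ may jump to $0$ there (a cube with $u=e_1$ already jumps), and an integration by parts based only on a pointwise derivative of $g$ leaves precisely these endpoint contributions unaccounted for. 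What closes the argument is that $g$ is of bounded variation (unimodal, with at most two jumps): writing $g(t/r)$ as a Lebesgue--Stieltjes integral of $dg$ over $(t/r,\infty)$ for $t>0$ (symmetrically for $t<0$) and applying Fubini expresses $\int_0^\infty g(t/r)\zeta(t)\,dt$ as $-\int_{(0,\infty)} Z_\zeta(rs)\,dg(s)$ with $Z_\zeta(x)=\int_0^x\zeta(t)\,dt$, and then absolute continuity on $(0,s_0]$ follows from absolute continuity of the indefinite integral $Z_\zeta$ and finiteness of the total variation of $g$; the endpoint jumps of $g$ reproduce exactly the moving-limit terms that the paper controls by local integrability of $\zeta$. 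Finally, your closing worry about behaviour ``down to $r=0$'' is not needed: membership in $\ACF{n}$ only requires absolute continuity on $(0,s]$ together with $\lim_{r\to 0}rZK(ru)=0$, which you have already established.
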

\begin{proof}
The simplicity and the valuation property of $Z$ are trivial.

Since the valuation defined by \eqref{eq:ac1} is clearly $\sln$ covariant, by \eqref{eq:fubini} and a standard approximation, the valuation defined by \eqref{eq:meas1} is also $\sln$ covariant.

Let $\{x_i\} \subset \ro{n}$ be a sequence converging to $x \in \ro{n}$ and $\{K_i\} \subset \MK^n$ be a sequence converging to $K \in \MK^n$.
By \cite[Theorem 1.8.8]{Schb2}, $K_i \cap H_{x_i,t} \to K \cap H_{x,t}$ if either $H_{x,t}$ passes through the relative interior of $K$ or $K \cap H_{x,t} = \emptyset$.
Hence $V_{n-1}(K_i \cap H_{x_i,t}) \to V_{n-1}(K \cap H_{x,t})$ for every $t \in \R \setminus \{h_K(x), -h_K(-x)\}$.
Notice that the support set of the function $t \mapsto V_{n-1}(K \cap H_{x,t})$ is compact.
By the dominated convergence theorem, \eqref{eq:meas1}, the boundedness of $t \mapsto V_{n-1}(K \cap H_{x,t})$ and
the continuity of $\mu$, we obtain $ZK_i(x_i) \to ZK(x)$.
This not only gives the continuity of $ZK$, but also the continuity of $Z$.

For $x \in \ro{n}$, let $r=|x|$ and $u=x/r$.
By \eqref{eq:meas1},
\begin{align*}
rZK(ru)=\int_{-rh_{-K}(u)}^{rh_{K}(u)} V_{n-1}(K \cap H_{ru,t}) d\mu(t).
\end{align*}
Since $\mu$ is continuous and $$V_{n-1}(K \cap H_{ru,t}) \le \max_{t \in \R} V_{n-1}(K \cap H_{u,t}),$$
we have $\lim_{r \to 0} rZK(ru)=0$.

If $\mu$ has a density $\zeta$, then $\zeta$ is locally integrable.
For any $\e>0$, since $|\zeta(t) V_{n-1}(K \cap H_{ru,t})| \le |\zeta(t)| \max_{t \in \R} V_{n-1}(K \cap H_{u,t})$ and $\zeta$ is locally integrable,
there is a $\delta>0$ such that
\begin{align*}
\left|\int_{E} \zeta(t) V_{n-1}(K \cap H_{ru,t})dt \right| < \e,
\end{align*}
whenever $|E|<\delta$ for compact $E \subset \R$.
Hence the function
\begin{align*}
r \mapsto rZK(ru)=\int_{0}^{rh_{K}(u)} \zeta(t)V_{n-1}(K \cap H_{ru,t}) d\mu(t) +\int_{-rh_{-K}(u)}^{0} \zeta(t) V_{n-1}(K \cap H_{ru,t}) d\mu(t)
\end{align*}
is absolutely continuous on $(0,s]$ for any $s>0$.

If further $\zeta \in \CSF{}$, then clearly $ZK \in \CSF{n}$ for any $K \in \MK^n$.
\end{proof}

Since
\begin{align*}
h_{K \cup L} =\max\{h_K,h_L\}, h_{K \cap L} =\min\{h_K,h_L\}
\end{align*}
for $K,L,K \cup L \in \MK^n$, the map $K \mapsto \zeta(h_K)$ is a valuation.
The following two lemmas are trivial, so we omit the proofs.
\begin{lem}\label{lem:suppf}
Let $\zeta \in \FF{n}$. The map $Z:\MK^n \to \FF{n}$, defined by $ZK=\zeta(h_K)$, is a $\GL{}{n}$ covariant valuation of weight $0$.
Moreover, if $\zeta \in \CF{}$, then $ZK \in \CF{n}$ and $Z$ is continuous.
\end{lem}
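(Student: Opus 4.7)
The statement is essentially a bookkeeping claim, so my plan is short and aims to verify each of the three assertions (valuation, $\GL{}{n}$ covariance of weight $0$, continuity) from the defining identities.

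First I would verify the valuation property. Given $K,L \in \MK^n$ with $K \cup L \in \MK^n$, the identities $h_{K \cup L}=\max\{h_K,h_L\}$ and $h_{K \cap L}=\min\{h_K,h_L\}$ hold pointwise on $\R^n$. For any $x\in\R^n$ the two-element multiset $\{h_K(x),h_L(x)\}$ coincides with $\{\max\{h_K(x),h_L(x)\},\min\{h_K(x),h_L(x)\}\}$, so applying $\zeta$ to both entries and summing yields
\begin{equation*}
\zeta(h_K(x))+\zeta(h_L(x))=\zeta(h_{K\cup L}(x))+\zeta(h_{K\cap L}(x)),
\end{equation*}
which is exactly $ZK+ZL=Z(K\cup L)+Z(K\cap L)$ pointwise.

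Next I would establish $\GL{}{n}$ covariance of weight $0$. For any $\phi\in\GL{}{n}$, the standard identity $h_{\phi K}(x)=h_K(\phi^t x)$ gives, for every $x\in\R^n$,
\begin{equation*}
Z(\phi K)(x)=\zeta(h_{\phi K}(x))=\zeta(h_K(\phi^t x))=ZK(\phi^t x),
\end{equation*}
which is precisely the weight-$0$ covariance condition. No weight factor $|\det\phi|^q$ appears because no such factor appears on the right side.

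Finally I would handle the continuity claims under the extra hypothesis $\zeta\in\CF{}$. Since $h_K\in\CF{n}$ for every $K\in\MK^n$, the composition $\zeta\circ h_K$ lies in $\CF{n}$, giving $ZK\in\CF{n}$. For continuity of $Z$, suppose $K_i\to K$ in the Hausdorff metric; then $h_{K_i}\to h_K$ uniformly on $\Sn$ and, by the positive homogeneity of support functions, uniformly on each compact subset of $\R^n$, so in particular $h_{K_i}(x)\to h_K(x)$ for each $x$. The continuity of $\zeta$ then yields $\zeta(h_{K_i}(x))\to\zeta(h_K(x))$, i.e.\ $ZK_i(x)\to ZK(x)$ pointwise, which is the required form of continuity. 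There is no serious obstacle here — each step is an immediate consequence of well-known properties of support functions — which is why the author marks the lemma as trivial and omits the proof.
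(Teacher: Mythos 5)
Your proposal is correct and follows exactly the route the paper indicates: it omits the proof as trivial after recording the identities $h_{K\cup L}=\max\{h_K,h_L\}$, $h_{K\cap L}=\min\{h_K,h_L\}$, and your verification of covariance via $h_{\phi K}(x)=h_K(\phi^t x)$ and of continuity via uniform convergence of support functions plus continuity of $\zeta$ is just the standard filling-in of those details.
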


\begin{lem}\label{lem:ref}
If $Z:\MK^n \to \FF{n}$ is a continuous, $\GL{}{n}$ covariant valuation of weight $0$, so are $Z', Z''$ defined by $Z'K=Z(-K)$ and $Z''K=Z[K,o]$.
\end{lem}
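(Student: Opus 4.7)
The plan is to verify, for each of $Z'$ and $Z''$ separately, the three required properties: valuation, continuity, and $\GL{}{n}$ covariance of weight $0$. For $Z'$ this will be immediate. The map $K \mapsto -K$ commutes with unions and intersections, is continuous with respect to the Hausdorff metric, and satisfies $\phi(-K) = -(\phi K)$ for every $\phi \in \GL{}{n}$. Each of the three properties of $Z'$ therefore reduces to the same property of $Z$ applied to $-K$. For instance, covariance follows from
\[
Z'(\phi K)(x) = Z(-\phi K)(x) = Z(\phi(-K))(x) = Z(-K)(\phi^t x) = Z'K(\phi^t x),
\]
and continuity and the valuation identity are verified the same way. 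Note that $K, L, K \cup L, K \cap L \in \MK^n$ implies the same for $-K, -L, -K \cup -L, -K \cap -L$.

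For $Z''$ I would follow the same template, reducing each property to a property of $Z$ on the convex body $[K, o] \in \MK^n$. The $\GL{}{n}$ covariance is immediate from $\phi[K, o] = [\phi K, \phi o] = [\phi K, o]$. Continuity follows from the fact that $K_i \to K$ in the Hausdorff metric implies $K_i \cup \{o\} \to K \cup \{o\}$ and hence $[K_i, o] \to [K, o]$, by the continuity of the convex hull operation. The valuation identity is the main obstacle, and rests on the two set identities
\[
[K \cup L, o] = [K, o] \cup [L, o], \qquad [K \cap L, o] = [K, o] \cap [L, o],
\]
valid whenever $K, L, K \cup L, K \cap L \in \MK^n$.

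The first identity I would obtain by writing, for any convex set $M$, $[M, o] = \{\lambda q : q \in M, \lambda \in [0,1]\}$, applying this with $M = K \cup L$, and splitting according to whether $q$ lies in $K$ or $L$. For the second identity the inclusion $[K \cap L, o] \subseteq [K, o] \cap [L, o]$ is immediate. For the reverse inclusion I would fix a nonzero point $p \in [K, o] \cap [L, o]$ and consider the ray $R$ from $o$ through $p$. The hypothesis places points of $K$ and $L$ on $R$ at distance at least $|p|$ from $o$, so $R \cap K$ and $R \cap L$ are nonempty closed intervals along $R$, and their union $R \cap (K \cup L)$ is an interval by the convexity of $K \cup L$, which forces the two intervals to overlap. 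Comparing endpoints then shows that $R \cap K \cap L$ contains a point at distance at least $|p|$ from $o$, and $p$ is a convex combination of this point and $o$, giving $p \in [K \cap L, o]$. With these identities in hand, the valuation identity for $Z''$ drops out from the valuation identity for $Z$ applied to $[K, o]$ and $[L, o]$.
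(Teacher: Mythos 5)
Your proof is correct. The paper omits the proof of this lemma as ``trivial,'' so there is no written argument to compare against; your verification --- the immediate reduction for $Z'$, and for $Z''$ the two hull identities $[K\cup L,o]=[K,o]\cup[L,o]$ and $[K\cap L,o]=[K,o]\cap[L,o]$, both of which genuinely use the convexity of $K\cup L$ (via the segment description of $[M,o]$ and the ray/interval argument, respectively) --- is exactly the routine check the paper has in mind, carried out correctly, including the point that $[K,o]\cup[L,o]$ is then itself a convex body so that the valuation identity for $Z$ may be applied.
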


To check that $\eutr{\pm}{\zeta}$, $\eutr{}{\zeta}$ are valuations on $\MP^n$, we only need to check that \eqref{val100} holds for some special $K,L\in \mpo{n}$.

\begin{lem}[\cite{Schb2}, Theorem 6.2.3]\label{lem:wval}
If a map $Z:\MP^n \to \R$ satisfies that
\begin{align}\label{eq:wval1}
Z(P) + Z(P \cap H) = Z(P \cap H^+) + Z(P \cap H^-)
\end{align}
for any $P \in \MP^n$ and hyperplane $H$, then $Z$ is a valuation.
Here $H^+,H^-$ denote the half-spaces bounded by $H$.
\end{lem}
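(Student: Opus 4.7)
The plan is to upgrade the single-hyperplane cut identity \eqref{eq:wval1} to the full valuation property $Z(P) + Z(Q) = Z(P \cup Q) + Z(P \cap Q)$ for all $P, Q, P \cup Q \in \MP^n$ by a bootstrap/induction argument on a combinatorial measure of how ``entangled'' $P$ and $Q$ are inside $P \cup Q$.

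The clean base case is when there is a hyperplane $H$ with $P = (P \cup Q) \cap H^+$, $Q = (P \cup Q) \cap H^-$, and $P \cap Q = (P \cup Q) \cap H$: here \eqref{eq:wval1} applied to $R := P \cup Q$ and $H$ gives the conclusion at once, since the right-hand side is $Z(P) + Z(Q)$ and the left-hand side is $Z(P \cup Q) + Z(P \cap Q)$. For the general case, I would induct on a complexity measure of the pair $(P, Q)$ --- for instance, the number of hyperplanes spanned by facets of $P$ or of $Q$ that meet the interior of $P \cup Q$ (the ``interior facet hyperplanes''). When this count is zero, one is in the base case. Otherwise, pick an interior facet hyperplane $H$, apply \eqref{eq:wval1} to each of the four polytopes $P$, $Q$, $P \cup Q$, $P \cap Q$ cut by $H$, and form the signed sum $[\text{equation for }P] + [\text{equation for }Q] - [\text{equation for }P \cup Q] - [\text{equation for }P \cap Q]$. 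The interior-slice terms $Z(\cdot \cap H)$ telescope and cancel, reducing the claim to two valuation identities on the half-pieces $(P \cap H^\pm, Q \cap H^\pm)$, each of which has strictly smaller complexity and is handled by the induction hypothesis.

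The main obstacle is the combinatorial bookkeeping: verifying that the complexity measure really drops on passing to the two half-space pieces, and checking that the slice identities inside $H$ match up so that all auxiliary terms cancel (the latter itself may need an analogous induction, possibly on dimension $n$). A cleaner and more standard alternative, which is presumably what Schneider adopts, is Groemer's strategy: use \eqref{eq:wval1} to show that $Z$ extends consistently by inclusion-exclusion to finite unions of polytopes (dissections), verify well-definedness of the extension by reducing each ambiguity to a chain of single-hyperplane cuts, and then read off \eqref{val100} from a common dissection of $P \cup Q$ refining both $P$ and $Q$.
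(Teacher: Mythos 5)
The paper does not prove this lemma at all: it is quoted verbatim from Schneider's book (Theorem 6.2.3), so the comparison can only be with the standard argument behind that citation. Your outline is essentially that argument and can be made to work, but two points in your sketch need repair. First, your ``complexity zero'' case is not the separated case but the containment case: if no facet hyperplane of $P$ or $Q$ meets the interior of $P\cup Q$ (and $P\cup Q$ is full-dimensional), then one of $P\subseteq Q$ or $Q\subseteq P$ holds, and the identity is trivial; the separated configuration you describe actually has positive complexity. Second, the slice terms do \emph{not} ``telescope and cancel'': adding the four cut identities with signs yields
\begin{align*}
D(P,Q)+D(P\cap H,Q\cap H)=D(P\cap H^+,Q\cap H^+)+D(P\cap H^-,Q\cap H^-),
\end{align*}
where $D(A,B)=Z(A)+Z(B)-Z(A\cup B)-Z(A\cap B)$, so the slice contribution is a full valuation defect one dimension down and needs its own induction (you do flag this); note also that weak additivity inside $H$ does follow from the hypothesis, since any $(n-2)$-flat in $H$ lies in an ambient hyperplane different from $H$, and that you must either adopt the convention $Z(\emptyset)=0$ or restrict to cuts that actually meet the bodies in order to write the four identities at all.

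There is a sharper choice of cutting hyperplanes that collapses all of this bookkeeping and is, in substance, the classical proof: cut only with the facet hyperplanes $H_1,\dots,H_k$ of $P$, oriented so that $P\subseteq H_i^-$. If $H=H_i$ meets the interior of $P\cup Q$, then $(P\cup Q)\cap H^+$ is full-dimensional while $P\cap H^+=P\cap H$ is lower-dimensional, so a density argument gives $P\cap H\subseteq Q\cap H^+$, hence $P\cap H\subseteq Q\cap H$; consequently $(P\cup Q)\cap H^+=Q\cap H^+$ and $(P\cup Q)\cap H=Q\cap H$, and applying \eqref{eq:wval1} only to $Q$ and to $P\cup Q$ yields $D(P,Q)=D(P,Q\cap H^-)$ with no residual half-space or slice defects (the cases where $H$ misses the interior, or where one piece is empty, are trivial). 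Iterating over $i=1,\dots,k$ replaces $Q$ by $Q\cap H_1^-\cap\dots\cap H_k^-=P\cap Q$, giving $D(P,Q)=D(P,P\cap Q)=0$ directly, with no induction on dimension and no complexity count. Your Groemer-style alternative (extending $Z$ over dissections) also works but is heavier machinery than the statement requires.
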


\begin{lem}\label{lem:val1}
The operators $\eutr{\pm}{\zeta}:\MP^n \to \FF{n}$ and $\eutr{}{\zeta}:\MP^n \to \FF{n}$ are $\GL{}{n}$ covariant valuations of weight $0$.
Moreover, if $\zeta \in \CF{}$, then $\eutr{\pm}{\zeta}(P) \in \CF{n}$ for any $P \in \MP^n$.
\end{lem}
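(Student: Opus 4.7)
The continuity assertion is immediate: for $\zeta\in\CF{}$ and fixed $P\in\MP^n$, each summand $x\mapsto\zeta(h_F(x))$ in the defining formulas \eqref{defval3}--\eqref{defval4} is continuous on $\R^n$, being the composition of the continuous function $h_F$ (a finite maximum of linear functionals) with the continuous function $\zeta$; a finite sum of continuous functions is continuous, so $\eutr{\pm}{\zeta}P\in\CF{n}$. The $\GL{}{n}$-covariance of weight~$0$ reduces to the standard transformation rules: for $\phi\in\GL{}{n}$, the map $F\mapsto\phi F$ is a dimension-preserving bijection $\face{}{}(P)\to\face{}{}(\phi P)$ satisfying $h_{\phi F}(x)=h_F(\phi^t x)$, and the identities $N(\phi P,\phi F)=\phi^{-t}N(P,F)$ together with $h_{\phi P}(\phi^{-t}u)=h_P(u)$ show this bijection restricts to $\face{\pm}{}(P)\to\face{\pm}{}(\phi P)$.

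To establish the valuation property, I will invoke Lemma~\ref{lem:wval} (applied pointwise in $x$) and verify
$$Z(P)+Z(P\cap H)=Z(P\cap H^+)+Z(P\cap H^-)$$
for every $P\in\MP^n$ and every hyperplane $H$. I partition the faces of $P$ according to their position relative to $H$ into (a) those contained in $\intr H^+$ or $\intr H^-$, (b) those contained in $H$, and (c) faces $F$ properly crossing $H$, for which $F_\pm:=F\cap H^\pm$ is a face of $P\cap H^\pm$ with $\dim F_\pm=\dim F$, and $F_0:=F\cap H$ is a face of $P\cap H^\pm$ and of $P\cap H$ with $\dim F_0=\dim F-1$; every face of the three cut polytopes arises in this way. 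Cases (a) and (b) contribute identically to both sides by direct inspection. For case (c), the LHS contribution is $(-1)^{\dim F}\zeta(h_F(x))+(-1)^{\dim F-1}\zeta(h_{F_0}(x))$ and the RHS contribution is $(-1)^{\dim F}[\zeta(h_{F_+}(x))+\zeta(h_{F_-}(x))]+2(-1)^{\dim F-1}\zeta(h_{F_0}(x))$, so cancellation reduces to the multiset identity
$$\{h_F(x),\,h_{F_0}(x)\}=\{h_{F_+}(x),\,h_{F_-}(x)\}.$$

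I expect the technical effort to concentrate on this multiset identity and on the bookkeeping for the $\pm$-versions. The identity follows from $h_F(x)=\max(h_{F_+}(x),h_{F_-}(x))$ (since $F=F_+\cup F_-$): assuming WLOG $h_F(x)=h_{F_+}(x)\ge h_{F_-}(x)$, the inclusion $F_0\subseteq F_-$ yields $h_{F_0}(x)\le h_{F_-}(x)$, and for the reverse I take maximizers $y^*\in F_-$ and $z\in F_+$ of $x\cdot y$, observing that $[y^*,z]\subseteq F$ crosses $H$ at a convex combination $w$ with $x\cdot w\ge x\cdot y^*$. For $\eutr{\pm}{\zeta}$ the additional task is to check that restricting the case-(c) accounting to $\face{\pm}{}(\cdot)$ preserves the cancellation; here the key point is that the relative interior of $F_\pm$ meets $\intr H^\pm$, so $N(P\cap H^\pm,F_\pm)=N(P,F)$ and $h_P(u)=h_{P\cap H^\pm}(u)$ on that cone, giving $F\in\face{\pm}{}(P)\iff F_\pm\in\face{\pm}{}(P\cap H^\pm)$, while the normal cones at $F_0$ acquire extra rays $\pm u_H$ from the cut. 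A short case analysis based on the sign of $u_H\cdot y$ for $y\in H$ then shows that the multiplicity of $F_0$ in $\face{\pm}{}$ of $P\cap H$, $P\cap H^+$, and $P\cap H^-$ still matches on both sides of the valuation identity, which together with the multiset identity completes the proof.
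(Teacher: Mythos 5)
Your proposal is correct and follows essentially the same route as the paper: reduce to the hyperplane-cut identity via Lemma \ref{lem:wval}, classify the faces of $P$ by their position relative to $H$, cancel the crossing faces via the multiset identity $\{h_F(x),h_{F\cap H}(x)\}=\{h_{F\cap H^+}(x),h_{F\cap H^-}(x)\}$ (the paper's $\zeta(h_F)+\zeta(h_{F\cap H})=\zeta(h_{F\cap H^+})+\zeta(h_{F\cap H^-})$, coming from $h$ of union/intersection being max/min), and treat $\face{\pm}{}$ by normal-cone identifications plus a case analysis on the position of $H$ relative to the origin, which is exactly the paper's dichotomy $o\in H$ versus $o\notin H$. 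Two small points to tighten: class (a) should consist of faces whose \emph{relative interior} lies in the open half-space (a face touching $H$ but not contained in it is otherwise unclassified), and in the $\pm$ bookkeeping the faces of $P$ contained in $H$ are not disposed of ``by direct inspection''---their membership in $\face{-}{}$ of $P$, $P\cap H$, $P\cap H^{\pm}$ genuinely differs when $o\notin H$ and needs the same sign-of-the-cut analysis you apply to $F_0$ (this is the content of the paper's \eqref{eq:face6} and \eqref{eq:face9}).
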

\begin{proof}
By the definitions \eqref{defval3} and \eqref{defval4} together with Lemma \ref{lem:suppf}, $\eutr{\pm}{\zeta}$ and $\eutr{}{\zeta}$ are $\GL{}{n}$ covariant of weight $0$ and the last statement holds.

In the following, we only show that $\eutr{-}{\zeta}$ is a valuation. The argument for $\eutr{+}{\zeta}$ and $\eutr{}{\zeta}$ are similar.
By Lemma \ref{lem:wval}, we only need to check that \eqref{eq:wval1} holds for any $P \in \MP^n$ and hyperplane $H$.
We can assume that $\dim (P \cap H) < \dim P=\dim (P \cap H^+)=\dim (P \cap H^-)$.
We can also assume that $\dim P =n$ since otherwise, we can consider the affine hull of $P$ for the following argument.
Now we have $\dim (P \cap H) =n-1$.

Let $0\leq j \leq n$.
We can separate $\face{}{j} (P)$ as follows
\begin{align}\label{eq:face1}
\face{}{j} (P) = \mathcal{A}_{j,H^+} \cup \mathcal{A}_{j,H^-} \cup \mathcal{B}_j \cup \mathcal{C}_j,
\end{align}
where
\begin{align*}
&\mathcal{A}_{j,H^{\pm}} =\{F \in \face{}{j} (P): \relint F \subset \relint H^\pm\}, \\
&\mathcal{B}_j=\{F \in \face{}{j} (P): F \subset H\}, \\
&\mathcal{C}_j=\{F \in \face{}{j} (P): F \cap \relint H^+ \neq \emptyset,  F \cap \relint H^- \neq \emptyset\}.
\end{align*}
Set $\mathcal{C}_{n+1}= \emptyset$ and notice that $\mathcal{C}_n=\{P\}$.
Since $\partial (P\cap H) = \partial P \cap H$ and $\partial (P\cap H^\pm)=(\partial P\cap H^\pm) \cup (P \cap H)$,
it is easy to see
\begin{align}\label{eq:face2}
\face{}{j} (P\cap H) = \mathcal{B}_j \cup \{F \cap H: F \in \mathcal{C}_{j+1}\},
\end{align}
and
\begin{align}
\face{}{j} (P\cap H^+) = \mathcal{A}_{j,H^+} \cup \mathcal{B}_j \cup \{F \cap H^+: F \in \mathcal{C}_j\} \cup \{F \cap H: F \in \mathcal{C}_{j+1}\}, \label{eq:face3} \\
\face{}{j} (P\cap H^-) = \mathcal{A}_{j,H^-} \cup \mathcal{B}_j \cup \{F \cap H^-: F \in \mathcal{C}_j\} \cup \{F \cap H: F \in \mathcal{C}_{j+1}\}. \label{eq:face4}
\end{align}

Let $\mathcal{A}_{j,H^{\pm}}^-=\mathcal{A}_{j,H^{\pm}} \cap \face{-}{j}(P)$, $\mathcal{B}_j^-=\mathcal{B}_j \cap \face{-}{j}(P)$ and $\mathcal{C}_j^-=\mathcal{C}_j \cap \face{-}{j}(P)$.

For the case $o \in H$, \eqref{eq:face1}-\eqref{eq:face4} also hold if we replace $\face{}{},\mathcal{A}_{j,H^{\pm}},\mathcal{B}_j,\mathcal{C}_j$ by $\face{-}{},\mathcal{A}_{j,H^{\pm}}^-,\mathcal{B}_j^-,\mathcal{C}_j^-$, respectively.
Now combined with the definition \eqref{defval3},
we have
\begin{align*}
&\eutr{-}{\zeta} P
    = \sum_{0 \leq j \leq n} (-1)^j\left(\sum_{F \in \mathcal{A}_{j,H^+}^-} \zeta (h_{F}) + \sum_{F \in \mathcal{A}_{j,H^-}^-} \zeta (h_{F}) + \sum_{F \in \mathcal{B}_j^-} \zeta (h_{F}) + \sum_{F \in \mathcal{C}_j^-} \zeta (h_{F})\right), \\
&\eutr{-}{\zeta} (P\cap H)
    = \sum_{0 \leq j \leq n-1} (-1)^j \left(\sum_{F \in \mathcal{B}_j^-} \zeta (h_{F}) + \sum_{F \in \mathcal{C}_{j+1}^-} \zeta(h_{F \cap H})\right) \\
&\eutr{-}{\zeta} (P\cap H^+)
    = \sum_{0 \leq j \leq n} (-1)^j \left(\sum_{F \in \mathcal{A}_{j,H^+}^-}  \zeta (h_{F}) + \sum_{F \in \mathcal{B}_j^-} \zeta (h_{F}) + \sum_{F \in \mathcal{C}_j^-} \zeta(h_{F \cap H^+}) + \sum_{F \in \mathcal{C}_{j+1}^-} \zeta(h_{F \cap H})\right),  \\
&\eutr{-}{\zeta} (P\cap H^-)
    = \sum_{0 \leq j \leq n} (-1)^j \left(\sum_{F \in \mathcal{A}_{j,H^-}^-} \zeta (h_{F}) + \sum_{F \in \mathcal{B}_j^-} \zeta (h_{F}) + \sum_{F \in \mathcal{C}_j^-} \zeta(h_{F \cap H^-}) + \sum_{F \in \mathcal{C}_{j+1}^-} \zeta(h_{F \cap H})\right).
\end{align*}
Notice that $\mathcal{B}_n^- =\emptyset$.
To check \eqref{eq:wval1} holds for $Z=\eutr{-}{\zeta}$, after eliminating the same items, we only need to show
\begin{align}\label{eq:eu3}
&\sum_{0 \leq j \leq n} (-1)^j \sum_{F \in \mathcal{C}_j^-} \zeta (h_{F}) + \sum_{0 \leq j \leq n-1} (-1)^j \sum_{F \in \mathcal{C}_{j+1}^-} \zeta (h_{F \cap H}) \notag \\
&=\sum_{0 \leq j \leq n} (-1)^j \sum_{F \in \mathcal{C}_j^-} \zeta (h_{F \cap H^+}) + \sum_{0 \leq j \leq n} (-1)^j \sum_{F \in \mathcal{C}_j^-} \zeta (h_{F \cap H^-}) + 2 \sum_{0 \leq j \leq n-1} (-1)^j \sum_{F \in \mathcal{C}_{j+1}^-} \zeta (h_{F \cap H}).
\end{align}
The equation \eqref{eq:eu3} is actually true since
\begin{align*}
\zeta(h_F) + \zeta (h_{F \cap H}) &= \zeta (h_{F \cap H^+}) + \zeta (h_{F \cap H^-}), 
\end{align*}
and $\mathcal{C}_{0}^-= \emptyset$ (by its definition).

For the case $o \notin H$,
\begin{align}\label{eq:face5}
\face{-}{j} (P) = \mathcal{A}_{j,H^+}^- \cup \mathcal{A}_{j,H^-}^- \cup \mathcal{B}_j^- \cup \mathcal{C}_j^-,
\end{align}
and
\begin{align}\label{eq:face6}
\face{-}{j} (P\cap H) = \emptyset.
\end{align}
Set $\mathcal{B}_{j,H^{\pm}}^-=\mathcal{B}_j^- \cap \face{-}{j}(P\cap H^\pm)$ and $\mathcal{D}_j^-=\{F \cap H: F \in \mathcal{C}_{j+1}^-\}$, $\mathcal{D}_{j,H^{\pm}}^-= \mathcal{D}_j^- \cap \face{-}{j}(P\cap H^\pm)$.
We have
\begin{align}
\face{-}{j} (P\cap H^+) = \mathcal{A}_{j,H^+}^- \cup \mathcal{B}_{j,H^+}^- \cup \{F \cap H^+: F \in \mathcal{C}_j^-\} \cup \mathcal{D}_{j,H^+}^-, \label{eq:face7} \\
\face{-}{j} (P\cap H^-) = \mathcal{A}_{j,H^-}^- \cup \mathcal{B}_{j,H^-}^- \cup \{F \cap H^-: F \in \mathcal{C}_j^-\} \cup \mathcal{D}_{j,H^-}^-, \label{eq:face8}
\end{align}
and
\begin{align}
&\mathcal{B}_{j,H^+}^- \cup \mathcal{B}_{j,H^-}^-=\mathcal{B}_j^-,
~~~\mathcal{B}_{j,H^+}^- \cap \mathcal{B}_{j,H^-}^-=\emptyset, \label{eq:face9}\\
&\mathcal{D}_{j,H^+}^- \cup \mathcal{D}_{j,H^-}^-=\mathcal{D}_j^-,
~~\mathcal{D}_{j,H^+}^- \cap \mathcal{D}_{j,H^-}^-=\emptyset.\label{eq:face10}
\end{align}
Combining \eqref{eq:face5}-\eqref{eq:face10} with the definition of $\eutr{-}{\zeta}$, we only need to show that
\begin{align*}
&\sum_{0 \leq j \leq n} (-1)^j \sum_{F \in \mathcal{C}_j^-} \zeta (h_{F})\\
&=\sum_{0 \leq j \leq n} (-1)^j \sum_{F \in \mathcal{C}_j^-} \zeta (h_{F \cap H^+}) + \sum_{0 \leq j \leq n} (-1)^j \sum_{F \in \mathcal{C}_j^-} \zeta (h_{F \cap H^-}) + \sum_{0 \leq j \leq n-1} (-1)^j \sum_{F \in \mathcal{C}_{j+1}^-} \zeta (h_{F \cap H}),
\end{align*}
which is the same as \eqref{eq:eu3}.
\end{proof}

Let $\CFo{n}$ be the space of continuous functions on $\ro{n}$.
We say a function $f: (0,\infty) \to \R$ is regular if $f$ is bounded or measurable on any compact interval.
\begin{lem}\label{lem:contin}
If $Z: \mpo{n} \to \FFo{n}$ is continuous and $\sln$ covariant, then $ZP \in \CFo{n}$ for every $P \in \mpo{n}$ and the function $s \mapsto Z(sT^n)(s^{-1}x)$ is regular on $(0,\infty)$ for any $x \in \ro{n}$.
Moreover, if further $Z$ is simple and $Z(sT^n)(s^{-1}x)=s^nZ(T^n)(x)$ for any $s>0$ and $x \in \ro{n}$, then $\lim_{s \to 0} sZT^n(sx)=0$ for any $x \in \ro{n}$.

If $Z: \MP^n \to \FFo{n}$ is continuous and $\sln$ covariant, then $ZP \in \CFo{n}$ for every $P \in \MP^n$ and the functions $s \mapsto Z(sT^n)(s^{-1}x)$ and $s \mapsto Z(s[e_1,\dots,e_n])(s^{-1}x)$ are regular on $(0,\infty)$ for any $x \in \ro{n}$.
Moreover, if further $Z$ vanishes on all $P\in \mpo{n}$ with $\dim P<n$, 
and $Z(sT^n)(s^{-1}x)=s^nZ(T^n)(x)$ and $Z(s[e_1,\dots,e_n])(s^{-1}x)=s^nZ([e_1,\dots,e_n])(x)$ for any $x \in \ro{n}$, then $\lim_{s \to 0} sZT^n(sx)=0$ and $\lim_{s \to 0} sZ[e_1,\dots,e_n](sx)=0$ for any $x \in \ro{n}$.
\end{lem}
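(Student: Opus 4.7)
The plan is to prove the three conclusions of each of the two assertions in turn; the first two (continuity of $ZP$ and regularity of $s\mapsto Z(sT^n)(s^{-1}x)$) are routine once one exploits the transitivity of $\sln$ on $\ro{n}$, whereas the third (decay at the origin) calls for a short geometric degeneration.

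For $ZP\in\CFo{n}$, given $x_i\to x_0$ in $\ro{n}$ I choose $\phi_i\in\sln$ with $\phi_i^T x_0=x_i$ and $\phi_i\to I$ (possible since $\sln$ acts smoothly and transitively on $\ro{n}$); by $\sln$-covariance, $ZP(x_i)=Z(\phi_iP)(x_0)$, and since $\phi_iP\to P$ in the Hausdorff metric, continuity of $Z$ in the body argument delivers $ZP(x_i)\to ZP(x_0)$. The same transport trick shows that $s\mapsto Z(sT^n)(s^{-1}x)$ is continuous (hence regular) on $(0,\infty)$, and likewise for $s\mapsto Z(s[e_1,\ldots,e_n])(s^{-1}x)$: for $s_i\to s_0$ take $\phi_i\in\sln$ with $\phi_i^T(s_0^{-1}x)=s_i^{-1}x$ and $\phi_i\to I$, rewrite $Z(s_iT^n)(s_i^{-1}x)=Z(\phi_is_iT^n)(s_0^{-1}x)$, and use the Hausdorff convergence $\phi_is_iT^n\to s_0T^n$.

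For the decay $\lim_{s\to 0}sZT^n(sx)=0$, I first observe that the extra hypothesis $Z(sT^n)(s^{-1}x)=s^n ZT^n(x)$ combined with $\sln$-covariance upgrades, on the $\sln$-orbit of $T^n$, to $\GL{+}{n}$-covariance of weight one:
\begin{align*}
Z(\phi\,\mu T^n)(y)\;=\;(\det\phi)\,Z(\mu T^n)(\phi^T y)\qquad\text{for every }\phi\in\GL{+}{n},\ \mu\in\sln,
\end{align*}
obtained by factoring $\phi=tI\cdot\nu$ with $t=(\det\phi)^{1/n}$, $\nu\in\sln$, and propagating the $T^n$-homogeneity through $\sln$-covariance to $\nu\mu T^n$. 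Given $x\in\ro{n}$, I choose $\psi\in\sln$ with $\psi^T e_1=x$ (transitivity), so $\sln$-covariance gives $ZT^n(sx)=Z(\psi T^n)(se_1)$, and I apply the weight-one identity with $\phi_s:=\mathrm{diag}(s,1,\ldots,1)\in\GL{+}{n}$ (satisfying $\det\phi_s=s$ and $\phi_s^T e_1=se_1$):
\begin{align*}
sZT^n(sx)\;=\;s\,Z(\psi T^n)(se_1)\;=\;Z(\phi_s\psi T^n)(e_1).
\end{align*}
As $s\to 0$, $\phi_s\psi T^n$ converges in Hausdorff to the coordinate projection of $\psi T^n$ onto $\{y_1=0\}$; since $o\in T^n$ is fixed by $\psi\in\sln$, this limit lies in $\mpo{n}$ with dimension at most $n-1$. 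Simplicity of $Z$ forces vanishing on it, and continuity of $Z$ concludes $sZT^n(sx)\to 0$.

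The main obstacle is the parallel decay $\lim_{s\to 0}sZ[e_1,\ldots,e_n](sx)=0$ in the $\MP^n$ setting: since $\psi[e_1,\ldots,e_n]$ generally does not contain the origin, the naive $\phi_s=\mathrm{diag}(s,1,\ldots,1)$ does not place the Hausdorff limit of $\phi_s\psi[e_1,\ldots,e_n]$ in $\mpo{n}$, so the assumption that $Z$ vanishes on lower-dimensional members of $\mpo{n}$ cannot be invoked directly. I would resolve this by replacing $\phi_s$ with the symmetric rank-one perturbation $I-(1-s)\,vv^T/|v|^2\in\GL{+}{n}$ (determinant $s$), choosing $v$ proportional to $\psi$ applied to the centroid of $[e_1,\ldots,e_n]$; then the Hausdorff limit of $\phi_s\psi[e_1,\ldots,e_n]$ is the projection onto $v^\perp$, which contains $o$ (the image of the centroid), placing the limit in $\mpo{n}$. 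The adapted weight-one identity combined with the joint continuity of $Z$ in $(K,y)$---itself a consequence of the transport trick of the first paragraph---then identifies $sZ[e_1,\ldots,e_n](sx)$ with a quantity tending to $0$.
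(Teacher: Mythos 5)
Your proofs of the first three assertions are correct and are essentially the paper's own argument: the transport trick (replacing a moving point by a moving body through $\sln$ covariance) gives $ZP\in\CFo{n}$, the regularity of $s\mapsto Z(sT^n)(s^{-1}x)$, and the joint continuity you later invoke; and your weight-one identity applied with $\phi_s=\mathrm{diag}(s,1,\dots,1)$ after moving $x$ to $e_1$ is the same volume-collapsing degeneration that the paper performs with the map scaling the $x$-direction by $s$ and fixing $x^\perp$. You are also right to single out the $[e_1,\dots,e_n]$ case, which the paper dismisses as ``similar'': there the hypothesis only gives vanishing on lower-dimensional polytopes containing the origin, so the collapse must be arranged to produce such a polytope.

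However, your resolution of that case has a gap: you never track the evaluation point, and with your choice of $v$ it can degenerate to the origin. Writing $S=[e_1,\dots,e_n]$, your weight-one identity forces $sZS(sx)=Z(\phi_s\psi S)(y_s)$ with $y_s=\phi_s^{-1}(s\psi^{-t}x)=s\psi^{-t}x+(1-s)\frac{v\cdot\psi^{-t}x}{|v|^2}\,v$. With $v=\psi c$, $c$ the centroid, one has $v\cdot\psi^{-t}x=c\cdot x$, hence $y_s\to\frac{c\cdot x}{|v|^2}v$; whenever $x\cdot(e_1+\dots+e_n)=0$ this limit is $o$, and then neither the pointwise continuity of $Z$ nor your joint continuity (both available only on $\ro{n}$) lets you pass to the limit, while the vanishing of $Z$ on the projected polytope says nothing about a value ``at $o$''. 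So for a whole hyperplane of directions $x$ the argument does not close as written. The repair is simple and should be stated: choose $w\in S$ with $w\cdot x\neq 0$ (possible because the linear span of $S$ is all of $\R^n$) and set $v=\psi w$; then $y_s\to\frac{w\cdot x}{|v|^2}v\neq o$, the Hausdorff limit of $\phi_s\psi S$ is the orthogonal projection of $\psi S$ onto $v^\perp$, which contains $o$ (the image of $\psi w$) and has dimension at most $n-1$, so the vanishing hypothesis applies and joint continuity yields $sZS(sx)\to 0$. Since $y_s$ moves with $s$, the appeal to joint continuity (or a transport of $y_s$ back to a fixed point) must be made explicit rather than left at ``a quantity tending to $0$''.
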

\begin{proof}
We only prove the first part of the Lemma. The second part is similar.

Let $x_i,x \in \ro{n}$ such that $x_i \to x$.
We need to show that $ZP(x_i) \to ZP(x)$.
There are $\phi_i \in \sln$ such that $x_i=\phi_i^t x$ and $(\phi_i P) \to P$ with respect to the Hausdorff metric.
Hence the continuity and the $\sln$ covariance of $Z$ give $ZP(x_i)=Z(\phi_i P)(x) \to ZP(x)$.

Let $\phi_s \in \sln$ such that $\phi_s x = s^{-1} x$ and $\phi_s y=s^{1/(n-1)}y$ for any $y \in x^\bot:=\{y \in \R^n:x \cdot y=0\}$.
Clearly $\phi_s^t=\phi_s$.
The $\sln$ covariance of $Z$ gives
$Z(sT^n)(s^{-1}x)=Z(\phi_s sT^n)(x)$.
Hence, the continuity of $Z$ implies that $s \mapsto Z(sT^n)(s^{-1}x)$ is continuous which is clearly regular.

Lastly, let $s>0$ and $\psi \in \GL{+}{n}$ such that $\phi x = s x$ and $\phi y =y$ for any $y \in x^\bot$.
By the $\sln$ covariance of $Z$ and the assumption that $Z(sT^n)(s^{-1}x)=s^nZ(T^n)(x)$ for any $x \in \ro{n}$, we have
$Z(\psi T^n)(x)=Z(s^{1/n}T^n)(s^{-1/n}\psi^t x)=s ZT^n(sx)$.
Also, $\psi T^n$ converges to the orthogonal projection of $T^n$ to $x^\bot$ as $r \to 0$.
Together with the simplicity of $Z$, we get the desired property.
\end{proof}

\section{Functional equations and uniqueness arguments}\label{sec:fe}
If a regular function $f:(0,\infty) \to \R$ satisfies the Cauchy functional equation
\begin{align*}
f(a+b) = f(a) + f(b),
\end{align*}
for any $a,b> 0$, then
\begin{align*}
  f(r) = f(1)r
\end{align*}
for any $r>0$.

The following dissections of standard simplices $T^d:=[o,e_1,\dots,e_d]$ are fundamental in our arguments.

For $0 < \lambda <1$, the hyperplane $H_\lambda$ and half spaces $H_\lambda^-,H_\lambda^+$ are defined by
\begin{align*}
H_\lambda := \{x \in\R^n : x \cdot ((1-\lambda) e_1- \lambda e_2) = 0\}, \\
H_\lambda^- := \{ x \in\R^n : x \cdot ((1-\lambda) e_1- \lambda e_2) \leq 0 \}, \\
H_\lambda^+ := \{ x \in\R^n : x \cdot ((1-\lambda) e_1- \lambda e_2) \geq 0 \}.
\end{align*}
Let $Z: \mpo{n} \to \FFo{n}$ be an $\sln$ covariant valuation.
We have
\begin{align}\label{val}
Z (sT^{d}) (x) + Z (sT^{d} \cap H_\lambda) (x) = Z (sT^{d} \cap H_\lambda ^-) (x) + Z (sT^{d} \cap H_\lambda ^+) (x), ~~x \in \ro{n}
\end{align}
for any $s > 0$ and $2 \leq d \leq n$.
Also, let $\widehat{T}^{d-1} := [o,e_1,e_3,\dots,e_d]$ and $\phi_\lambda,\psi_\lambda \in \gln$ such that
$$\phi_\lambda e_1 = \lambda e_1 + \left(1-\lambda\right) e_2,~\phi_\lambda e_i = e_i,~\text{for}~2 \leq i \leq n,$$
and
$$\psi_\lambda e_2 = \lambda e_1 + \left(1-\lambda\right) e_2,
\psi_\lambda e_i = e_i,~\text{for}~1 \leq i \leq n, i \neq 2.$$
Notice that
$sT^{d}\cap H_\lambda ^- = \phi_\lambda sT^{d}$, $sT^{d}\cap H_\lambda ^+ = \psi_\lambda sT^{d}$ and $sT^d \cap H_\lambda =\phi_\lambda s\hat{T}^{d-1}$.
Combined with the $\SLn$ covariance of $Z$, \eqref{val} implies
\begin{align}\label{30}
&Z(sT^{d}) (x) + Z(\lambda^{1/n}s\hat{T}^{d-1}) (\lambda^{-1/n} \phi_\lambda ^t x) \notag \\
&\qquad =Z(\lambda^{1/n} sT^{d}) \left(\lambda^{-1/n} \phi_\lambda ^t x \right) + Z((1-\lambda)^{1/n}sT^{d}) \left((1-\lambda)^{-1/n}\psi_\lambda ^t x\right),
\end{align}
where $x=r_1e_1 + \dots + r_n e_n \in \ro{n}$, $\phi_\lambda ^t x =(\lambda r_1 + (1-\lambda)r_2)e_1+ r_2e_2+r_3e_3+\dots+ r_ne_n$ and $\psi_\lambda ^t x = r_1e_1+ (\lambda r_1 + (1-\lambda)r_2)e_2+r_3e_3+\dots+ r_ne_n$.

Now we establish some properties of $Z(sT^{d}) (x)$ by assuming $Z(s\hat{T}^{d-1}) (x)=0$ in \eqref{30}.

\begin{lem}\label{lem:fe1}
Let $d \geq 2$ and let the function $f: (0,\infty) \times \ro{d}  \to \R$ satisfy the following properties. \\
($\romannumeral1$)
\begin{align}\label{30a}
f \big(s^{1/n};y \big)&=f\big((\lambda s)^{1/n}; \lambda^{-1/n}\phi_\lambda^t y \big) + f \big(((1-\lambda) s)^{1/n}; (1-\lambda)^{-1/n}\psi_\lambda ^t y \big)
\end{align}
for any $s>0$, $\lambda \in (0,1)$ and $y \in \ro{d}$. \\
($\romannumeral2$) For any $s>0$, $y \in \ro{d}$ and an arbitrary permutation $\pi$ (even permutation for $d \ge 3$) of coordinates of $y$,
\begin{align}\label{perm}
f(s;y)=f(s;\pi y).
\end{align}

We have following conclusions. \\
(a) If $f(s;re_1+\dots+re_m)=0$ for any $s>0$, $r \neq 0$ and $1 \leq m \leq d$, then
\begin{align}\label{eq:unq1}
f(s;x)=0
\end{align}
for every $s>0$ and $x \in \ro{d}$.
Moreover, if $f(s; \cdot)$ is continuous, we only need the assumption $f(s;re_1)=0$ to conclude \eqref{eq:unq1}. \\
(b) For $d \geq 3$, if the function $s \mapsto f(s^{1/n};s^{-1/n}r(e_1+\dots+e_m))$ is regular for any $r \neq 0$ and $1 \leq m \leq d$, then
\begin{align}\label{eq:hom}
f(s^{1/n};s^{-1/n}x)=sf(1;x)
\end{align}
for any $x \in \ro{d}$.

For $d=2$, if the function $s \mapsto f(s^{1/n};s^{-1/n}r(e_1+e_2))$ is regular for any $r \neq 0$, then
\begin{align*}
f(s^{1/n};s^{-1/n}r(e_1+e_2))=sf(1;r(e_1+e_2)).
\end{align*}
(c) For $d \geq 3$, if $f(s^{1/n};s^{-1/n}x)=f(1;x)$ for any $s>0$ and $x \in \ro{d}$, then \eqref{eq:unq1} also holds.
For $d=2$, if additionally, $f(s;re_1)=0$ for any $s>0$ and $r \neq 0$, then \eqref{eq:unq1} also holds.
\end{lem}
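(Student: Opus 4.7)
The plan is to prove parts (a), (b), (c) in sequence, each relying on the functional equation \eqref{30a} together with the permutation symmetry \eqref{perm} and carefully engineered choices of $\lambda$.

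For part (a), I would induct on the support size $k$ of $y$ (the number of nonzero coordinates). The base case $k=1$ and the diagonal cases $y=r(e_{i_1}+\cdots+e_{i_k})$ are covered by hypothesis. When $y$ has two opposite-sign coordinates, after permuting them into positions $1$ and $2$, I choose $\lambda=-r_2/(r_1-r_2)\in(0,1)$ so that $\lambda r_1+(1-\lambda)r_2=0$; both right-hand terms of \eqref{30a} then reduce to $f$-values at vectors with support $k-1$ and vanish by induction. When $y$ is non-diagonal but all coordinates share a sign (say positive), I apply \eqref{30a} to the auxiliary vector $y^*=r_1 e_1-r'e_2+r_3 e_3+\cdots+r_d e_d$, for which $f(s;y^*)=0$ by the opposite-sign case; for $\lambda$ close to $1$, the first right-hand term has an opposite-sign pair and vanishes, while the second is a same-sign vector $z$ with $z_2/z_1\in(0,\lambda)$, forcing $f(s;z)=0$. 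Varying $r'$ and $\lambda$, together with \eqref{perm} to reach ratios $z_2/z_1>1$, handles every non-diagonal same-sign vector. For the continuity ``moreover'', the same induction starting only from $f(s;re_1)=0$ covers every $y$ whose nonzero coordinates are not all equal, and the diagonal cases then follow as limits.

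For part (b), I first treat $x=a(e_{i_1}+\cdots+e_{i_m})$: because $\phi_\lambda^t$ and $\psi_\lambda^t$ fix $s^{-1/n}x$, \eqref{30a} collapses to the Cauchy equation $G_a(s)=G_a(\lambda s)+G_a((1-\lambda)s)$ for $G_a(s):=f(s^{1/n};s^{-1/n}a(e_{i_1}+\cdots+e_{i_m}))$; regularity forces linearity $G_a(s)=sG_a(1)$, which is \eqref{eq:hom} on these special vectors and finishes $d=2$. For $d\ge 3$, I set $G_y(s):=f(s^{1/n};s^{-1/n}y)$ so that \eqref{30a} rewrites as $G_y(s)=G_{\phi_\lambda^t y}(\lambda s)+G_{\psi_\lambda^t y}((1-\lambda)s)$. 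If $y$ has two equal coordinates (after an even permutation, in positions $1,2$), then $\phi_\lambda^t y=\psi_\lambda^t y=y$ and Cauchy gives linearity of $G_y$ directly. For $y$ with all distinct coordinates, I use even-permutation transitivity on pairs (available only because $d\ge 3$) to place the median of $r_1,r_2,r_3$ in position $3$ and pick $\lambda$ with $\lambda r_1+(1-\lambda)r_2=r_3$; then both $\phi_\lambda^t y$ and $\psi_\lambda^t y$ acquire a repeated coordinate, so the preceding step applies and $G_y(s)=\lambda s\,G_{\phi_\lambda^t y}(1)+(1-\lambda)s\,G_{\psi_\lambda^t y}(1)$ is linear in $s$.

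For part (c), the $0$-homogeneity lets me write $f(\tau;y)=g(\tau y)$ with $g(z):=f(1;z)$, and \eqref{30a} reduces to the splitting identity $g(y)=g(\phi_\lambda^t y)+g(\psi_\lambda^t y)$. Setting $r_1=r_2$ yields $g(y)=2g(y)$, so $g$ vanishes on vectors with equal first two coordinates; for $d\ge 3$, even-permutation transitivity propagates this to any pair of equal coordinates, and the ``median-in-position-$3$'' trick of part (b) then forces $g\equiv 0$. For $d=2$ pair transitivity fails, but the extra hypothesis $f(s;re_1)=0$ gives $g(re_1)=g(re_2)=0$: the opposite-sign case reduces via $c=0$ to axis values, and for same-sign pairs applying the splitting equation to $y=(r,0)$ gives $g(r,c)=0$ for all $c\in(0,r)$, which together with symmetry and the diagonal vanishing closes the gap. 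The principal obstacle throughout is the $d\ge 3$ bootstrap in (b) and (c): one must find a single $\lambda$ placing \emph{both} $\phi_\lambda^t y$ and $\psi_\lambda^t y$ into the already-controlled class, and this is precisely what matching $\lambda r_1+(1-\lambda)r_2$ to the third coordinate $r_3$ achieves — impossible in $d=2$, which is why an auxiliary hypothesis must be added there.
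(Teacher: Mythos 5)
Your parts (a) and (c) are sound: (a) follows the paper's induction on the number of nonzero coordinates (your handling of the same-sign case via an auxiliary opposite-sign vector at the same level is a small variant of the paper's direct use of \eqref{38}--\eqref{40}, and works), and (c) is a genuinely different, cleaner route than the paper's (which deduces (c) from (a) and (b)): passing to $g(z)=f(1;z)$, the splitting identity $g(y)=g(\phi_\lambda^t y)+g(\psi_\lambda^t y)$, the median trick, and your explicit $d=2$ argument are all correct.

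Part (b), however, has a genuine gap. Writing $G_y(s)=f(s^{1/n};s^{-1/n}y)$, your step for ``$y$ with two equal coordinates'' asserts that the Cauchy equation $G_y(s)=G_y(\lambda s)+G_y((1-\lambda)s)$ ``gives linearity directly.'' It does not: additive functions on $(0,\infty)$ need a regularity input (boundedness or measurability on compact intervals) to be linear, and the hypothesis of (b) supplies regularity of $G_y$ \emph{only} for the diagonal vectors $y=r(e_1+\cdots+e_m)$ (up to permutation). Your median trick makes this gap unavoidable: applied to, say, $y=(1,5,3)$ it produces $\psi_\lambda^t y=(1,3,3)$ and $\phi_\lambda^t y=(3,5,3)$, which have a repeated coordinate but are \emph{not} diagonal, so no regularity is available for them and their $G$'s cannot be declared linear. (A smaller instance of the same oversight: $\phi_\lambda^t$ and $\psi_\lambda^t$ do not fix $ae_1$, so even the base case needs the permutation reduction to a support containing or avoiding both of the first two slots, as the paper does with $re_d$.) The paper avoids all of this by inducting on the number of nonzero coordinates and, whenever two nonzero coordinates differ, using the reduction identities \eqref{38}, \eqref{40}, \eqref{23} to express $G_x(s)$ as a sum of terms $G_w(cs)$ with $w$ of strictly smaller support; Cauchy plus regularity is then invoked only along the assumed diagonal directions. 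To repair your (b) you essentially have to reinstate that support-size induction (which would also render the median trick superfluous there); as written, the conclusion \eqref{eq:hom} for general $x$ is not justified.
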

\begin{proof}
(a). We prove \eqref{eq:unq1} by induction on $k$, the number of non-zero coordinates of $x$.
By \eqref{perm}, we only need to show that $f(s;x)=0$ for $x=r_1e_1+\dots+r_ke_k$, where $r_1,\dots r_k$ are non-zero numbers.
For $k=1$, it is trivial. Assume that the statement holds for $k-1$.
Set $\dot{x}=r_3e_3+\dots+r_ke_k$.

For $r_1 > r_2 > 0$ or $0 > r_2 >r_1$, taking $\lambda = \frac{r_2}{r_1}$, $y=(1-\lambda)^{1/n}(r_1e_1+\dot{x})$ and replacing $s$ with $s/(1-\lambda)$ in (\ref{30a}), we get
\begin{align}\label{38}
&f\left(\left(\frac{r_1}{r_1-r_2}s\right)^{1/n}; \left(\frac{r_1-r_2}{r_1}\right)^{1/n}(r_1e_1 + \dot{x})\right)  \nonumber \\
 &= f\left(\left(\frac{r_2}{r_1-r_2}s\right)^{1/n};\left(\frac{r_1-r_2}{r_2}\right)^{1/n}(r_2 e_1 + \dot{x})\right)  + f\left(s^{1/n};r_1 e_1 + r_2 e_2 +\dot{x}\right).
\end{align}
For $r_2 > r_1 > 0$ or $0 > r_1 >r_2$,
taking $1-\lambda = \frac{r_1}{r_2}$, $y=\lambda^{1/n}(r_2e_2+\dot{x})$ and replacing $s$ with $s/\lambda$ in (\ref{30a}), we get
\begin{align}\label{40}
&f \left(\left(\frac{r_2}{r_2-r_1}s\right)^{1/n};\left(\frac{r_2-r_1}{r_2}\right)^{1/n}(r_2e_2 + \dot{x})\right) , \nonumber \\
&= f \left(s^{1/n};r_1 e_1 +r_2 e_2 + \dot{x}\right) + f\left(\left(\frac{r_1}{r_2-r_1}s\right)^{1/n};\left(\frac{r_2-r_1}{r_1}\right)^{1/n}(r_1 e_2 +\dot{x})\right).
\end{align}
For $r_1 >0 > r_2$ or $r_2 > 0 > r_1$, taking $0 < \lambda = \frac{r_2}{r_2 - r_1} <1$ and $y=x$ in (\ref{30a}), we get
\begin{align}\label{23}
&f\left(s^{1/n};r_1 e_1 + r_2 e_2 + \dot{x}\right) \nonumber \\
= &  f\left(\left(\frac{r_2}{r_2 - r_1}s\right)^{1/n};\left(\frac{r_2 - r_1}{r_2}\right)^{1/n}(r_2 e_2 + \dot{x})\right) + f\left(\left(\frac{-r_1}{r_2 - r_1}s\right)^{1/n};\left(\frac{r_2 - r_1}{-r_1}\right)^{1/n}(r_1 e_1 + \dot{x})\right).
\end{align}
The induction assumption together with (\ref{38}), (\ref{40}) and (\ref{23}) implies that $f(s;x)=0$ for $x=r_1e_1+\dots+r_ke_k$ if $r_1 \neq r_2$.

Now let $r_1=r_2=:r$.
If $f(s; \cdot)$ is continuous, we get $f(s;re_1+re_2+r_3e_3+ \cdots +r_ke_k)=0$ follows from the above conclusions.
Without the continuity of $f(s;\cdot)$,
if all $r_1,\dots,r_k$ are the same, the case is what we assumed.
Hence we can say $r_3 \neq r$.
Also, $f(s;re_1+re_2+r_3e_3+\dots+r_ke_k)=f(s;re_1+r_3e_2+re_3+\dots+r_ke_k)$ by \eqref{perm}.
Now using the above argument again, we get $f(s;re_1+re_2+r_3e_3+r_4e_4+\dots+r_ke_k)=0$.

(b) We also use induction on $k$.
Let first $d \ge 3$. For any $r\neq 0$, choosing $y=s^{-1/n}re_d$ in (\ref{30a}), we have
\begin{align}\label{30-3}
f \big(s^{1/n};s^{-1/n}re_d \big)
=f \big((\lambda s)^{1/n};(\lambda s)^{-1/n} r e_d \big) + f \big( ((1-\lambda)s)^{1/n};((1-\lambda)s)^{-1/n}r e_d \big)
\end{align}
for any $0 < \lambda <1$ and $s>0$.
Define
\begin{align*}
\xi_r(s) = f(s^{1/n};s^{-1/n}re_d)
\end{align*}
for $s>0$ and $r\in\R$.
For arbitrary $a,b > 0 $, $r \in \R$, set $s= a+b$, $\lambda =\frac{a}{a+b}$ in (\ref{30-3}).
We get that $\xi_r$ satisfies the Cauchy functional equation.
By the assumption, $\xi_r$ is regular. 
Hence
\begin{align*}
f(s^{1/n};s^{-1/n}re_d)=s f(1;re_d)
\end{align*}
for any $s>0$ and $r \neq 0$.
By \eqref{perm}, we finish the cases of $d \ge 3$ and $k=1$.

For $k \geq 2$, we prove the case $d=2$ and the cases $d \ge 3$ together.
Assume that the desired results hold for $k-1$.
If there are two different non-zero coordinates of $x$, the induction assumption together with \eqref{perm}, (\ref{38}), (\ref{40}) and (\ref{23}) implies the desired result.
If $r_1=r_2=\dots=r_k=:r$, taking $y= s^{-1/n}(re_1 +\dots +re_k)$ in (\ref{30a}), we get
\begin{align*}
&f\left(s^{1/n};s^{-1/n}(re_1 +\dots +re_k)\right)\nonumber \\
= &  f\left((\lambda s)^{1/n};(\lambda s)^{-1/n}(re_1 +\dots +re_k)\right) + f\left(((1-\lambda)s)^{1/n};((1-\lambda)s)^{-1/n}(re_1 +\dots +re_k)\right),
\end{align*}
which implies that the function $s \mapsto f\left(s^{1/n};s^{-1/n}(re_1 +\dots +re_k)\right)$ satisfies the Cauchy functional equation.
Since it is regular, $f\left(s^{1/n};s^{-1/n}(re_1 +\dots +re_k)\right)= sf\left(1;re_1 +\dots +re_k\right)$.

(c) For $d \ge 3$, by (a), we only need to show that $f\left(s^{1/n};s^{-1/n}(re_1 +\dots +re_m)\right)=0$ for any $1 \leq m \leq d$, which follows directly by the assumption in (c) and \eqref{eq:hom}.
The conclusion for $d=2$ is similar.
\end{proof}

The following uniqueness theorems are also fundamental. The proofs are similar to the corresponding proofs for $\sln$ contravariant valuations \cite{Li2018AFV}.
\begin{lem}\label{lemuq}
Let $Z$ and $Z'$ be $\SLn$ covariant function-valued valuations on $\mpo{n}$. If $Z (sT^d) = Z' (sT^d)$ for every $s>0$ and $0 \leq d \leq n$, then $Z P = Z' P$ for every $P \in \mpo{n}$.
\end{lem}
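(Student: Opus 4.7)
The plan is to set $Y := Z - Z'$. Since $Z$ and $Z'$ are both $\sln$ covariant valuations $\mpo{n} \to \FFo{n}$, so is their pointwise difference $Y$; and by hypothesis $Y(sT^d) \equiv 0$ for every $s > 0$ and every $0 \le d \le n$. It therefore suffices to prove that $YP \equiv 0$ for every $P \in \mpo{n}$. The argument has two stages: first $\sln$ covariance is used to remove the dependence on the standard shape $T^d$, and then the valuation property is used to remove the simplex restriction.

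\textbf{Step 1 ($\sln$ covariance removes the standard-simplex assumption).} I plan to show that $Y$ annihilates every simplex $S = [o, v_1, \ldots, v_d] \in \mpo{n}$ in which $o$ is a vertex. If $d < n$, I extend $v_1, \ldots, v_d$ to a basis of $\Rn$ using $n-d$ free vectors, then rescale one of them so that the linear map $\phi$ defined by $\phi e_i = v_i$ has $\det \phi = 1$. Then $\phi \in \sln$ and $S = \phi T^d$, so for every $x \in \ro{n}$,
\[
YS(x) = Y(\phi T^d)(x) = Y(T^d)(\phi^t x) = 0.
\]
If $d = n$, then $\phi$ is forced; after possibly transposing $v_1$ and $v_2$ to arrange $\det \phi > 0$, I set $s := (\det \phi)^{1/n}$ and $\psi := s^{-1} \phi \in \sln$, so that $S = \psi(sT^n)$ and the same calculation gives $YS(x) = Y(sT^n)(\psi^t x) = 0$.

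\textbf{Step 2 (valuation property removes the simplex assumption).} I plan to use the fact that every $P \in \mpo{n}$ admits a simplicial subdivision $\mathcal{T} = \{S_1, \ldots, S_m\}$ whose maximal simplices all have $o$ as a vertex: when $o \in \mathrm{int}\, P$ I triangulate $\partial P$ and cone with $o$, while when $o \in \partial P$ I first triangulate the closed union of those boundary faces of $P$ that do not contain $o$ in their relative interior and again cone with $o$. Because $\mathcal{T}$ is a simplicial complex and $o$ is a common vertex of every $S_i$, every nonempty intersection $\bigcap_{i \in I} S_i$ is again an origin-rooted simplex (a common face). Iterating \eqref{val100} over $\mathcal{T}$ yields the Groemer-type inclusion-exclusion identity
\[
YP(x) = \sum_{\emptyset \neq I \subseteq \{1, \ldots, m\}} (-1)^{|I|+1}\, Y\!\left(\bigcap_{i \in I} S_i\right)(x), \qquad x \in \ro{n},
\]
and Step 1 makes every summand vanish.

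\textbf{Main obstacle.} The substantive work lies in Step 2: producing a simplicial subdivision of $P$ all of whose simplices are rooted at $o$, and justifying the iterated application of \eqref{val100} to arrive at the full inclusion-exclusion identity (rather than just the two-term relation). Both are standard in the combinatorial theory of polytope valuations, and the details follow the same pattern as the analogous uniqueness argument for $\sln$ contravariant valuations in \cite{Li2018AFV}.
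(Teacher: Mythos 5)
Your Step 1 is correct and is exactly the standard reduction: $\sln$ covariance (with the dilation parameter $s$ absorbing the determinant in the full-dimensional case) shows that $Y=Z-Z'$ vanishes on every simplex with one vertex at the origin. The genuine gap is in Step 2, at the sentence ``Iterating \eqref{val100} over $\mathcal{T}$ yields the Groemer-type inclusion-exclusion identity.'' The relation \eqref{val100} is only available when $K$, $L$, $K\cup L$, $K\cap L$ all lie in the domain $\mpo{n}$, and the partial unions $\bigcup_{i\in I}S_i$ of a triangulation rooted at $o$ are in general not convex (already for a planar polygon with $o$ in its interior, the fan of triangles at $o$ has non-convex partial unions, in every ordering), so there is nothing to iterate: the identity you want is the full-additivity (inclusion--exclusion) property of the valuation, which is a theorem, not a formal consequence of the definition. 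Moreover, the classical versions of that theorem (Volland, Perles--Sallee; see Schneider, Section 6.2) are stated for valuations defined on all of $\MP^n$, and their proofs cut along affine hulls of facets of the pieces, hyperplanes which need not pass through the origin; hence they do not restrict verbatim to a valuation defined only on $\mpo{n}$, which is precisely the situation here. So the ``main obstacle'' you flag and defer as standard is in fact the entire content of the lemma, and as written it is unproven.

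The way this is handled in the literature this paper points to (the proof is said to be as in \cite{Li2018AFV}, and the same scheme goes back to the uniqueness lemmas for $\sln$ covariant Minkowski valuations) avoids inclusion--exclusion altogether: one dissects $P\in\mpo{n}$ into simplices with vertex $o$ by finitely many hyperplanes \emph{through the origin}, and argues by a double induction, on $\dim P$ and on the number of maximal pieces of such a dissection. At each step one uses only the two-term relation $Y(P)+Y(P\cap H)=Y(P\cap H^+)+Y(P\cap H^-)$ with $o\in H$, so that all four polytopes lie in $\mpo{n}$; the term $Y(P\cap H)$ is killed by the induction on dimension, and $P\cap H^{\pm}$ carry dissections with fewer pieces. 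Replacing your Step 2 by this central-hyperplane induction (and keeping your Step 1) yields a complete proof; alternatively you would have to prove a full-additivity statement for valuations on $\mpo{n}$ itself, which is not available as a citation and essentially amounts to the same induction.
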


\begin{lem}\label{lemuq2}
Let $Z$ and $Z'$ be $\SLn$ covariant function-valued valuations on $\MP^n$. If $Z (sT^d) = Z' (sT^d)$ and $Z(s[e_1,\dots,e_d]) = Z'(s[e_1,\dots,e_d])$ for every $s>0$ and $0 \leq d \leq n$, then $Z P = Z' P$ for every $P \in \MP^n$.
\end{lem}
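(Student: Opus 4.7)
The plan is to form the difference $W := Z - Z'$, an $\sln$ covariant valuation $\MP^n \to \FFo{n}$ that by hypothesis vanishes on $sT^d$ and $s[e_1,\dots,e_d]$ for every $s>0$ and $0\le d\le n$. Lemma \ref{lemuq} immediately gives $W \equiv 0$ on $\mpo{n}$, so the task reduces to showing $W(P)(x)=0$ for every $P \in \MP^n$ with $o \notin P$ and every $x\in\ro{n}$. By triangulating such a $P$ into $n$-simplices and iterating the valuation identity \eqref{val100} (equivalently Lemma \ref{lem:wval}), $W(P)$ becomes an alternating sum of $W$ on the simplices of the triangulation together with their pairwise and higher-order intersections; so the matter is reduced to showing $W(\sigma)=0$ for every simplex $\sigma \in \MP^n$. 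I proceed by induction on $d = \dim \sigma$.

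For a $d$-simplex $\sigma = [v_0,\dots,v_d]$ with $o \in \sigma$, we have $\sigma \in \mpo{n}$ and $W(\sigma)=0$ by Lemma \ref{lemuq}. Assume $o \notin \sigma$. If $o \notin \operatorname{aff}\sigma$ (which forces $d+1 \le n$), then $v_0,\dots,v_d$ are linearly independent; extending to a basis of $\R^n$ and normalizing the determinant produces $\phi \in \sln$ with $\phi v_i = s\, e_{i+1}$ for $i=0,\dots,d$ and a suitable scale $s>0$, and then $\sln$ covariance gives $W(\sigma)(x) = W(s[e_1,\dots,e_{d+1}])((\phi^{-1})^t x)=0$. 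The remaining case $o \in \operatorname{aff}\sigma \setminus \sigma$, which is automatic when $d=n$, is treated by a hyperplane dissection: one picks a hyperplane $H$ through $o$ that properly cuts $\sigma$ into two $d$-dimensional pieces $\sigma \cap H^\pm$ and invokes $W(\sigma) = W(\sigma \cap H^+) + W(\sigma \cap H^-) - W(\sigma \cap H)$. With a careful choice of $H$—for instance, the span of $o$ together with $d-1$ of the vertices of $\sigma$—each of $\sigma \cap H^\pm$ and $\sigma \cap H$ is either of strictly smaller dimension (handled by the inductive hypothesis), contains $o$ (handled by Lemma \ref{lemuq}), or is $\sln$-equivalent to one of the standard simplices $s[e_1,\dots,e_{d'}]$.

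The main obstacle is executing this dissection uniformly in the full-dimensional case $d=n$ with $o \notin \sigma$. After $\sln$-normalizing so that $\sigma = [w, s e_1, \dots, s e_n]$ for some $w \in \R^n$ whose coordinates are not all of the same sign, one must choose hyperplanes through $o$ that genuinely cut $\sigma$: the coordinate hyperplanes $\{x_i = 0\}$ cut only when $w_i$ is strictly negative, and even then the resulting pieces may share the defect that $o$ lies in their affine hull but not in the piece itself, forcing another round of dissection. Tracking the signs and cancellations through this iterated dissection is the delicate step; it mirrors the corresponding bookkeeping carried out for $\sln$ contravariant valuations in \cite{Li2018AFV}, and transfers to the present covariant setting with only notational changes.
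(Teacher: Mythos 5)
Your reduction to $\mpo{n}$ via Lemma \ref{lemuq} and your treatment of a simplex $\sigma$ with $o \notin \operatorname{aff}\sigma$ (mapping it by $\SLn$ covariance onto $s[e_1,\dots,e_{d+1}]$, with the scale forced only when $d+1=n$) are fine. The genuine gap is the remaining case $o \in \operatorname{aff}\sigma \setminus \sigma$, which is not a marginal case but the whole content of Lemma \ref{lemuq2} beyond Lemma \ref{lemuq}: it contains every full-dimensional polytope not containing the origin. Your proposed dissection does not make progress there. If $H$ is a hyperplane through $o$ cutting such a $\sigma$, the pieces $\sigma \cap H^{\pm}$ are again $d$-dimensional, do not contain $o$, still have $o$ in their affine hull (automatically when $d=n$), and are in general not simplices — so none of your three alternatives (lower dimension, contains $o$, $\SLn$-equivalent to $s[e_1,\dots,e_{d'}]$) applies to them; moreover, since your induction is set up only for simplices, the hypothesis cannot even be invoked for $\sigma\cap H$. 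You acknowledge this by deferring the ``bookkeeping'' of the iterated dissection to \cite{Li2018AFV}, but no termination argument is given, so the key step of the lemma is asserted rather than proved. (Note the paper itself does not write out this proof either; it refers to \cite{Li2018AFV}.)

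The standard way to close the gap avoids hyperplane cuts altogether and works for arbitrary polytopes, not just simplices: induct on $d=\dim P$ for $P\in\MP^n$ with $o\notin P$. If $o\notin\operatorname{aff}P$, triangulate $P$ inside $\operatorname{aff}P$; every simplex of the triangulation and every intersection has affine hull missing $o$, so the $\SLn$-normalization to $s[e_1,\dots,e_{d'}]$ applies. If $o\in\operatorname{aff}P\setminus P$, use the visible-facet decomposition of the cone: $[P,o]=P\cup\bigcup_j [F_j,o]$, where $F_1,\dots,F_k$ are the facets of $P$ (relative to $\operatorname{aff}P$) whose supporting hyperplanes separate $o$ from $P$. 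In the inclusion--exclusion for $W:=Z-Z'$ over this decomposition (available by the extension theorem for valuations on polytopes, or by iterating Lemma \ref{lem:wval}), the sets $[P,o]$ and $[F_j,o]$ and their mutual intersections contain $o$, hence contribute $0$ by Lemma \ref{lemuq}, while all remaining intersections, such as $P\cap[F_j,o]=F_j$, have dimension $<d$ and vanish by the induction hypothesis; the base case $d=0$ is an $\SLn$ image of $s[e_1]$. This yields $W(P)=0$ directly, with no delicate sign-tracking, and is essentially the reduction carried out in the contravariant setting of \cite{Li2018AFV} that your sketch points to but does not supply.
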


\section{Simple valuations}
In this section, we first prove the following theorems.

\begin{thm}\label{mthm:simpsln}
Let $n \ge 3$. A map $Z: \mpo{n} \to \CFr{n}$ is a regular, simple and $\sln$ covariant valuation
if and only if
there is a measure $\mu \in \cms$ such that
\begin{align*}
ZP(x)= \frac{1}{|x|} \int_{\R} V_{n-1}(P \cap H_{x,t}) d\mu(t)
\end{align*}
for every $P \in \mpo{n}$ and $x \in \ro{n}$.
\end{thm}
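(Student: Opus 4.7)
The ``if'' direction is exactly Lemma~\ref{lem:momf}, so I focus on the ``only if'' direction. Let $Z \colon \mpo{n} \to \CFr{n}$ be regular, simple, and $\sln$-covariant. By the uniqueness lemma, Lemma~\ref{lemuq}, $Z$ is determined by its values on $sT^d$ for $s>0$ and $0 \le d \le n$; simplicity forces $Z(sT^d) \equiv 0$ for $d < n$, so only $f(s;y) := Z(sT^n)(y)$ matters. Substituting $d=n$ into the cutting identity~\eqref{30}, the term $Z(\lambda^{1/n}s\widehat{T}^{n-1})$ vanishes by simplicity (since $\dim \widehat{T}^{n-1} = n-1$), leaving exactly the functional equation~\eqref{30a}. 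Even permutations of $e_1,\ldots,e_n$ lie in $\sln$ and fix $T^n$, so condition $(\romannumeral 2)$ of Lemma~\ref{lem:fe1} holds; the regularity hypothesis on $Z$ supplies the regularity needed in part~(b). Thus Lemma~\ref{lem:fe1}(b) yields the homogeneity
\[
Z(tT^n)(y) = t^n\, Z T^n(ty) \qquad (t > 0,\ y \in \ro{n}),
\]
which I abbreviate as~$(\ast)$.

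The central step is to produce $\mu \in \cms$. Since $V_{n-1}(T^n \cap H_{e_n,s}) = (1-s)^{n-1}/(n-1)!$ on $[0,1]$, the right-hand side of the candidate formula evaluated at $x = r e_n$ equals $\tfrac{1}{r^n(n-1)!}\int_0^r (r-t)^{n-1}\,d\mu(t)$. Setting $F(r) := r^n\, Z T^n(r e_n)$, we therefore need
\[
F(r) = \tfrac{1}{(n-1)!}\int_0^r (r-t)^{n-1}\,d\mu(t) \qquad (r > 0),
\]
i.e., $F$ must be the $n$-fold Riesz/Abel integral of $\mu$. My plan is to \emph{define} $\mu$ as the $n$-th distributional derivative of $(n-1)!\,F$. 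The function $F$ is continuous on $(0,\infty)$ because $Z T^n$ is, and $F(r) \to 0$ as $r \to 0^+$ because $Z T^n \in \CFr{n}$: writing $F(r) = r^{n-1}\cdot(r\,Z T^n(r e_n))$, the second factor tends to $0$, so the product tends to $0$ for $n \ge 2$.

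The main obstacle is proving that this distributional derivative is actually a signed continuous Radon measure --- mere continuity of $F$ is not enough to guarantee that. The extra regularity has to be extracted from the full valuation and $\sln$-covariance structure, not just from the values of $Z T^n$ along the $e_n$-axis. I plan to obtain it by iterating the functional equation~\eqref{30a} and exploiting the $A_n$-invariance of $f(s;\cdot)$, which re-expresses $F(r)$ and its difference quotients via $Z T^n$ evaluated along curves in $\ro{n}$. This should yield bounded-variation or absolute-continuity of the derivatives $F^{(k)}$ for $k \le n-1$, so that $F^{(n)}$ is indeed a signed Radon measure; the ``continuous'' (no-atom) property of $\mu$ will then follow from the continuity of $Z T^n$ on $\ro{n}$.

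With $\mu \in \cms$ in hand, define $\tilde Z P(x) := \frac{1}{|x|}\int_{\R} V_{n-1}(P \cap H_{x,t})\,d\mu(t)$. By Lemma~\ref{lem:momf}, $\tilde Z$ is a continuous, simple, $\sln$-covariant $\CFr{n}$-valued valuation, and hence satisfies the analogue of $(\ast)$. By construction, $Z T^n(r e_n) = \tilde Z T^n(r e_n)$ for every $r > 0$. I then upgrade this to $Z T^n = \tilde Z T^n$ on all of $\ro{n}$ using that both functions satisfy~\eqref{30a} and are $A_n$-invariant, which together with agreement on the positive $e_n$-axis propagates equality across $\ro{n}$. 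Combined with $(\ast)$, this gives $Z(sT^n) = \tilde Z(sT^n)$ for every $s > 0$, and Lemma~\ref{lemuq} concludes $Z = \tilde Z$.
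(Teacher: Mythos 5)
Your route is the same as the paper's: reduce to $sT^n$ via simplicity and Lemma \ref{lemuq}, derive the functional equation \eqref{30a} and the homogeneity \eqref{homco1} from Lemma \ref{lem:fe1}(b), realize $F(r)=r^nZT^n(re_n)$ as $\tfrac{1}{(n-1)!}\int_0^r(r-t)^{n-1}\,d\mu(t)$ for some $\mu\in\cms$, and finish with the uniqueness arguments. The problem is that the step you yourself identify as central is never carried out: you only announce a plan (``iterating the functional equation \dots should yield bounded-variation or absolute-continuity of the derivatives $F^{(k)}$''). Defining $\mu$ as the $n$-th distributional derivative of $F$ produces, a priori, only a distribution of order $n$; for the theorem you must prove it is a signed, atom-free Radon measure, and nothing in your text establishes any differentiability of $F$ beyond continuity. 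This is exactly the content of the paper's Lemma \ref{lem:simple1}: the valuation identities \eqref{38} and \eqref{40}, combined with the homogeneity \eqref{homco1}, give the exact identity \eqref{eq:dif1}, which writes the difference quotient of $r\mapsto rZT^n(re_1+\dots+re_m)$ as a value of $ZT^n$ at a nearby point; continuity of $ZT^n$ then yields the differentiation formula \eqref{eq:dif2}, and iterating it shows that $g(r)=r^nZT^n(re_1)$ has continuous derivatives up to order $n-1$ vanishing at $0$ (property $(\ast)$), after which the distributional derivative of $g^{(n-1)}$ is converted into a measure via the structure theorem for BV functions, its continuity giving that $\mu$ has no atoms. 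Without producing \eqref{eq:dif1}--\eqref{eq:dif2} (or a substitute), your argument stops precisely where the measure is supposed to appear, so the proposal has a genuine gap rather than a complete proof.

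Two further points. First, even granting the bootstrap, what the difference-quotient identity directly delivers is existence and continuity of $F^{(k)}$ for $k\le n-1$, not bounded variation or absolute continuity; to conclude that the distributional $F^{(n)}$ is a measure you still need to address local bounded variation of $F^{(n-1)}$, which your sketch elides entirely. Second, you state agreement of $Z$ and $\tilde Z$ only on the positive $e_n$-axis; the uniqueness step (the ``Moreover'' part of Lemma \ref{lem:fe1}(a), applied to the difference) needs $ZT^n(re_n)=\tilde ZT^n(re_n)$ for both signs of $r$, so the measure must also be constructed on $(-\infty,0)$, as the paper does by treating $r<0$ separately and adding the two measures.
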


Remark: for the ``only if part", we only need the weak assumption that $Z: \mpo{n} \to \CFo{n}$ is a simple and $\sln$ covariant valuation such that $ZT^n \in \CFr{n}$ and the function $s \mapsto Z(sT^n)(s^{-1}x)$ is regular on $(0, \infty)$.

The characterization of $M_\zeta$ may be of special interest.

\begin{thm}\label{mthm:acsimpsln}
Let $n \ge 3$. A map $Z: \mpo{n} \to \ACF{n}$ is a regular, simple and $\sln$ covariant valuation
if and only if
there is a locally integrable function $\zeta$ on $\R$ such that
\begin{align*}
ZP(x)= \int_{P} \zeta(x \cdot y) dy
\end{align*}
for every $P \in \mpo{n}$ and $x \in \ro{n}$.
\end{thm}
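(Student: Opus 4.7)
The \emph{if} direction follows from the second part of Lemma~\ref{lem:momf}: whenever $\zeta$ is locally integrable, $P \mapsto \int_P \zeta(x \cdot y)\,dy$ is a simple, regular, $\sln$ covariant valuation taking values in $\ACF{n}$. My plan for the \emph{only if} direction is to apply the weaker classification Theorem~\ref{mthm:simpsln} and then exploit a well-chosen test polytope to upgrade the resulting measure from merely continuous to absolutely continuous.

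Since $\ACF{n} \subset \CFr{n}$, Theorem~\ref{mthm:simpsln} applies and produces a continuous signed Radon measure $\mu \in \cms$ with
\[
ZP(x) = \frac{1}{|x|} \int_{\R} V_{n-1}(P \cap H_{x,t}) \, d\mu(t)
\]
for every $P \in \mpo{n}$ and $x \in \ro{n}$. At this point the task becomes purely one-dimensional: I need to show that the hypothesis $Z(\mpo{n}) \subseteq \ACF{n}$ forces $\mu$ to be absolutely continuous with respect to Lebesgue measure. Once this is done, $d\mu(t)=\zeta(t)\,dt$ for some $\zeta \in L^1_{\mathrm{loc}}(\R)$, and Fubini's theorem in the form of \eqref{eq:fubini} converts the representation into the desired $ZP(x) = \int_P \zeta(x \cdot y)\,dy$.

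To carry out this upgrade, I would test the representation on the unit cube $C = [0,1]^n \in \mpo{n}$. Its cross-section function $s \mapsto V_{n-1}(C \cap H_{e_1,s})$ is identically $1$ on $[0,1]$ and zero outside, so a direct substitution yields
\[
r\,ZC(re_1) = \mu([0,r]), \qquad r\,ZC(-re_1) = \mu([-r,0])
\]
for every $r>0$. The assumption $ZC \in \ACF{n}$ now says precisely that both maps $r \mapsto \mu([0,r])$ and $r \mapsto \mu([-r,0])$ are absolutely continuous on every compact subinterval of $(0,\infty)$, which is the classical criterion for $\mu$ to be absolutely continuous with respect to Lebesgue measure. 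The only genuine idea is the choice of the cube: its flat cross-sections reduce $r\,ZC(\pm re_1)$ to the one-dimensional distribution function of $\mu$, which is exactly the object whose absolute continuity is equivalent to $\mu$ having a density. Everything else is bookkeeping combining the previously proved Theorem~\ref{mthm:simpsln} with the Fubini identity \eqref{eq:fubini}.
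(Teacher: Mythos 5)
Your argument is correct, but your ``only if'' direction takes a genuinely different route from the paper's. The paper does not deduce Theorem \ref{mthm:acsimpsln} from Theorem \ref{mthm:simpsln} a posteriori; both are proved simultaneously inside Lemma \ref{lem:simple1}, which works entirely on the simplex: the measure $\mu$ is built from the $(n-1)$-st derivative of $g(r)=r^nZT^n(re_1)$ via the structure theorem for BV functions, and when $ZT^n\in\ACF{n}$ the same differentiation bookkeeping (the relations \eqref{eq:dif2}) shows that $g^{(n-1)}$ is locally absolutely continuous, so $\mu$ has a density and \eqref{eq:fubini} finishes. You instead treat Theorem \ref{mthm:simpsln} as a black box (legitimate, since $\ACF{n}\subset\CFr{n}$ and regularity is assumed in both statements) and then test the measure representation on the cube $C=[0,1]^n\in\mpo{n}$, whose flat sections give $rZC(re_1)=\mu([0,r])$ and $rZC(-re_1)=\mu([-r,0])$, so the $\ACF{n}$ hypothesis turns into absolute continuity of the distribution function of $\mu$. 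This is a cleaner, more modular argument: it isolates exactly where the extra hypothesis $Z(\mpo{n})\subseteq\ACF{n}$ enters and avoids re-entering the induction machinery of Lemma \ref{lem:simple1}; the paper's route, in exchange, delivers all three simple-valuation theorems (including the $\CSF{n}$ case) from one lemma. Two points you dismiss as bookkeeping deserve a sentence each: absolute continuity of $r\mapsto\mu([0,r])$ on compact subintervals of $(0,\infty)$ excludes a singular part of $\mu$ on $(0,\infty)$ because any such part would have positive variation on some $[a,s]$ with $a>0$ (and symmetrically on $(-\infty,0)$), while $\mu(\{0\})=0$ is precisely the continuity of $\mu\in\cms$, and local integrability of the resulting density near the origin follows from local finiteness of the total variation $|\mu|$; with these remarks the upgrade to $d\mu(t)=\zeta(t)\,dt$ and the application of \eqref{eq:fubini} are complete.
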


For smooth $\zeta$, we obtain the following result.

\begin{thm}\label{mthm:smsimsln}
Let $n \ge 3$. A map $Z: \mpo{n} \to \CSF{n}$ is a regular, simple and $\sln$ covariant valuation
if and only if
there is a function $\zeta \in \CSF{}$ such that
\begin{align*}
ZP(x)&= \int_{P} \zeta(x \cdot y) dy
\end{align*}
for every $P \in \mpo{n}$ and $x \in \R^n$.
\end{thm}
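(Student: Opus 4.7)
The plan is to reduce Theorem~\ref{mthm:smsimsln} to the already-established Theorem~\ref{mthm:acsimpsln} and then bootstrap local integrability of the density to $C^\infty$-smoothness. The ``if'' direction is immediate from Lemma~\ref{lem:momf}: when $\zeta \in \CSF{}$, the operator $P \mapsto \int_P \zeta(x\cdot y)\, dy$ is a continuous (hence regular), simple, $\sln$ covariant, $\CSF{n}$-valued valuation.

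For the ``only if'' direction, I would first verify the inclusion $\CSF{n} \subset \ACF{n}$, which is routine: any $f \in \CSF{n}$ is continuous on $\ro{n}$, the map $r \mapsto rf(ru)$ is smooth (hence absolutely continuous) on every compact subinterval of $(0,\infty)$, and $rf(ru) \to 0$ as $r \to 0^+$ since $f(o)$ is finite. Thus Theorem~\ref{mthm:acsimpsln} applies and furnishes a locally integrable $\zeta : \R \to \R$ with $ZP(x) = \int_P \zeta(x\cdot y)\, dy$ for every $P \in \mpo{n}$ and $x \in \ro{n}$. It remains to exhibit a $\CSF{}$ representative of $\zeta$.

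For this, I would take $P = T^n$ and restrict to $x = re_1$ with $r \in \R$. Using the standard slice-volume formula for the unit simplex,
\[
ZT^n(re_1) = \int_0^1 \zeta(ry_1)\, \frac{(1-y_1)^{n-1}}{(n-1)!}\, dy_1.
\]
A careful substitution $t = ry_1$, handled separately for $r>0$ and $r<0$ with the convention $\int_0^r = -\int_r^0$ when $r<0$, then yields, uniformly for $r \in \R$,
\[
G(r) \;:=\; r^n\, ZT^n(re_1) \;=\; \int_0^r \zeta(t)\, \frac{(r-t)^{n-1}}{(n-1)!}\, dt.
\]
Since $ZT^n \in \CSF{n}$, the function $G$ lies in $C^\infty(\R)$. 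On the other hand, the right-hand side is the $n$-fold Cauchy iterated integral of $\zeta$ anchored at $0$, so $G^{(n)}(r) = \zeta(r)$ for almost every $r \in \R$. Setting $\widetilde\zeta := G^{(n)} \in \CSF{}$ therefore produces a smooth representative of $\zeta$; since altering $\zeta$ on a Lebesgue-null set does not change $\int_P \zeta(x\cdot y)\, dy$ for $x \in \ro{n}$, we may replace $\zeta$ by $\widetilde\zeta$ and conclude.

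The only mildly non-routine point I anticipate is the sign bookkeeping in the substitution $t=ry_1$, which is needed so that the single identity $G(r) = \int_0^r \zeta(t)(r-t)^{n-1}/(n-1)!\,dt$ is valid on both half-lines simultaneously; once this is in place, the classical identification of an $n$-fold Cauchy iterated integral with the inverse of $\tfrac{d^n}{dr^n}$ closes the argument with no further work.
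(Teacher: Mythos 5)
Your proof is correct, but it is organized differently from the paper's. The paper does not deduce the smooth case from Theorem \ref{mthm:acsimpsln}; instead, all three simple-valuation theorems are proved simultaneously via the reduction through \eqref{eq:fe2}, \eqref{30}, Lemma \ref{lem:fe1}(a) and Lemma \ref{lemuq} to Lemma \ref{lem:simple1}, whose final paragraph treats smoothness directly: since $g(r)=r^nZT^n(re_1)$ lies in $\CSF{}$, the paper simply sets $\zeta=g^{(n)}$, so the smooth representative appears at once and no almost-everywhere modification is needed. You instead use the already established Theorem \ref{mthm:acsimpsln} as a black box (after the routine check that restrictions of $\CSF{n}$-functions lie in $\ACF{n}$) and then upgrade the locally integrable density: the slicing/Fubini identity $r^nZT^n(re_1)=\int_0^r\zeta(t)(r-t)^{n-1}/(n-1)!\,dt$, valid on both half-lines with your sign convention, identifies $\zeta$ a.e.\ with the $n$-th derivative of a $C^\infty$ function, and altering $\zeta$ on a null set leaves every integral $\int_P\zeta(x\cdot y)\,dy$, $x\neq o$, unchanged because the pushforward of Lebesgue measure on $P$ under $y\mapsto x\cdot y$ is absolutely continuous. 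Both arguments hinge on exactly the same function $r^nZT^n(re_1)$ and the same kernel $(r-t)^{n-1}/(n-1)!$ (which in Lemma \ref{lem:simple1} arises as the section volume of $sT^n$); what your version buys is brevity, since the functional-equation machinery is not rerun, at the modest price of the a.e.\ identification step. One small point to add: the theorem asserts the representation for all $x\in\R^n$, whereas Theorem \ref{mthm:acsimpsln} and your bootstrap give it only on $\ro{n}$; since $ZP$ and $x\mapsto\int_P\widetilde\zeta(x\cdot y)\,dy$ are both continuous at $o$ (the latter because $\widetilde\zeta$ is continuous and $P$ is compact), the identity extends to $x=o$ by continuity, which is the paper's remark that the relation ``also holds for $r=0$''.
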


The "if" part of all theorems follows directly from Lemma \ref{lem:momf}.

Let
\begin{align}\label{eq:fe2}
f(s;y)=Z(sT^n)(y),~~y\in \ro{n}.
\end{align}
Since $Z$ is a simple and $\sln$ covariant valuation, by \eqref{30}, Lemma \ref{lem:fe1} (a) and Lemma \ref{lemuq}, we only need the following lemma.

\begin{lem}\label{lem:simple1}
Let $n \geq 3$. If $Z : \mathcal{P}_o ^n \to \CFr{n}$ is a regular, simple and $\SLn$ covariant valuation,
then there is a measure $\mu \in \cms$ such that
\begin{align*}
Z(sT^n) (re_1)
= \frac{1}{|r|}\int_{\R} V_{n-1}(sT^n \cap H_{re_1,t}) d \mu(t)
\end{align*}
for every $s>0$ and $r \neq 0$.

Moreover, if $Z(T^n) \in \ACF{n}$, there is a locally integrable function $\zeta: \R \to \R$ such that
\begin{align*}
Z(sT^n) (re_1)
= \int_{sT^n} \zeta(r e_1 \cdot y) dy
\end{align*}
for every $s>0$ and $r \neq 0$.
Further, $Z(T^n) \in \CSF{n}$ gives $\zeta \in \CSF{}$ and the above relation also holds for $r =0$.
\end{lem}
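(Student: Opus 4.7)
The plan is to use a functional equation to reduce to the one-variable function $\tilde g(R) := Z(T^n)(Re_1)$ on $\R \setminus \{0\}$, and then to extract $\mu$ by showing that $F(R) := R^n \tilde g(R)$ is smooth enough for its $(n-1)$-st derivative to be a continuous signed Radon measure.

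For the homogeneity step, I would apply Lemma \ref{lem:fe1}(b) to $f(s;y) := Z(sT^n)(y)$. The functional equation \eqref{30a} follows from the valuation identity \eqref{val} applied to the $H_\lambda$-dissection of $sT^n$, together with simplicity (which kills the cross-section term $Z(\phi_\lambda s\hat T^{n-1}) \equiv 0$) and $\SLn$ covariance used to rewrite the two pieces. The permutation invariance \eqref{perm} with even permutations suffices since for $n \ge 3$ the alternating group is transitive on $\{e_1,\dots,e_n\}$ and lies in $\SLn$ as stabilizer of $T^n$. The regularity of $s \mapsto f(s^{1/n}; s^{-1/n} r(e_1+\dots+e_m))$ is the regularity assumption on $Z$. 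Lemma \ref{lem:fe1}(b) then yields $Z(sT^n)(re_1) = s^n \tilde g(sr)$ for every $s > 0$, $r \neq 0$, reducing the task to characterizing $\tilde g$.

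For the smoothness step, I would revisit the same dissection identity at fixed $x = re_1$ and use the just-proven homogeneity. Since $\phi_\lambda^t e_1 = \lambda e_1$ and $\psi_\lambda^t e_1 = e_1 + \lambda e_2$, it simplifies to
\begin{align*}
h_2(R, R') := Z(T^n)(Re_1 + R' e_2) = \frac{R\tilde g(R) - R'\tilde g(R')}{R - R'},\qquad R' := \lambda R \in (0, R),
\end{align*}
for all $\lambda \in (0,1)$ and $R > 0$. The continuity of $Z(T^n)$ at the diagonal $R' = R$ forces $R\mapsto R\tilde g(R)$ to be $C^1$ on $(0,\infty)$ with derivative $h_2(R,R)$. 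Iterating this procedure at $x = R(e_1 + \dots + e_k)$ and dissecting by the hyperplanes perpendicular to $(1-\lambda)e_1 - \lambda e_j$ for $j = 2,\dots,n$ produces continuous auxiliary functions $h_2, h_3, \dots, h_n$ realizing higher-order difference quotients of $F(R) := R^n\tilde g(R) = Z(RT^n)(e_1)$; one concludes inductively that $F \in C^{n-1}(0,\infty)$, and $\lim_{R\to 0} R\tilde g(R) = 0$ (inherited from $Z(T^n) \in \CFr{n}$) yields $F^{(k)}(0) = 0$ for $k = 0,\dots,n-2$.

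With this smoothness in hand, define $\mu$ on $(0,\infty)$ by $\mu([0,R]) := F^{(n-1)}(R)$; since $F^{(n-1)}$ is continuous, $\mu$ is a continuous signed Radon measure, and Taylor's formula $F(R) = \frac{1}{(n-1)!}\int_0^R (R-t)^{n-1} d\mu(t)$, combined with the explicit evaluation $V_{n-1}(sT^n \cap H_{re_1,t}) = \frac{s^{n-1}}{(n-1)!}(1-t/(sr))^{n-1}$ on $[0,sr]$, yields the claimed integral formula for $r > 0$. Running the same construction on $(-\infty,0)$ via $\tilde g$ on the negative axis extends $\mu$ to an element of $\cms$. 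For the moreover statements, $Z(T^n)\in\ACF{n}$ upgrades $F^{(n-1)}$ to be absolutely continuous, so $\mu$ has a locally integrable density $\zeta$, and Fubini converts the integral formula into $Z(sT^n)(re_1) = \int_{sT^n} \zeta(re_1\cdot y)\,dy$; $Z(T^n)\in\CSF{n}$ gives $\zeta\in\CSF{}$ and the formula extends smoothly to $r = 0$. The main obstacle is the inductive $C^{n-1}$ argument, which requires the right choice of dissections and exploits crucially the continuity of $Z(T^n)$ at vectors with several nonzero components, as supplied by $\CFr{n}$.
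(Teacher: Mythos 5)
Your proposal follows essentially the same route as the paper's proof: reduce via Lemma \ref{lem:fe1}(b) to the one-variable function $F(R)=R^{n}Z(T^n)(Re_1)$, obtain its $C^{n-1}$-regularity on $\ro{}$ inductively from the difference-quotient identities produced by the simplex dissections (this is precisely the paper's property $(\ast)$ argument via \eqref{eq:dif1} and \eqref{eq:dif2}), recover $\mu$ from $F^{(n-1)}$, identify the resulting integral with the section volumes of $sT^n$, and upgrade $\mu$ to a density in the $\ACF{n}$ and $\CSF{n}$ cases. One point needs tightening: you must prove the vanishing at the origin of the derivatives up to order $n-1$, not just $n-2$, since $F^{(n-1)}(R)\to 0$ as $R\to 0$ is exactly what prevents $\mu$ from acquiring an atom at $0$ (so that $\mu\in\cms$) and is needed for your Taylor identity as stated; it follows by the mechanism you already use, because $F^{(n-1)}$ is a linear combination of terms $R\,Z(T^n)\bigl(R(e_1+\cdots+e_{j})\bigr)$, each tending to $0$ by the definition of $\CFr{n}$ (so you need the decay at vectors with several nonzero coordinates, not only at $e_1$). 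Also, continuity of $F^{(n-1)}$ by itself only yields that the measure, once it exists, has no atoms; as in the paper, the existence of $\mu$ as a signed Radon measure should be routed through the local bounded variation of $F^{(n-1)}$ (extended by $0$) and the BV structure theorem rather than through continuity alone.
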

\begin{proof}
Define the function $g(r) = r^nZ(T^n)(re_1)$ on $\ro{}$ and let $g^{(k)}$ denote the $k$-th derivative (if it exists) of $g$.
We assume first that $g$ has the following property: \\
($\ast$) $g^{(k)}$ exists and is continuous on $\ro{}$ from $k=1$ up to $n-1$ and
\begin{align*}
\lim_{r\to 0} g^{(k)}(r)=0
\end{align*}
for $0 \leq k \leq n-1$.

By $(\ast)$, there is a signed Radon measure $\mu$ on $\R$ such that
\begin{align}\label{eq:bv}
\int_0^r (r-t)^{n-1} d \mu(t) = (n-1)\int_0^r (r-t)^{n-2} g^{(n-1)}(t) dt.
\end{align}
Indeed, let first $r>0$. We define the function $j: \R \to \R$ by
\begin{align*}
j(t)=\begin{cases}
g^{(n-1)}(t), & t>0, \\
0, & t \leq 0.
\end{cases}
\end{align*}
Property $(\ast)$ gives that $j$ is locally a BV function.
By the structure theorem of BV functions \cite{MR3409135}, there is a signed Radon measure $\mu$ such that
\begin{align*}
\int_{\R} \theta(t) \mu(t) = -\int_{\R} j(t) \frac{\dif}{dt}\theta(t) dt
\end{align*}
for every $\theta \in C_c^1(\R)$, the set of compactly supported and continuously differentiable functions on $\R$.
Moreover, since $j$ is continuous and $j(t)=0$ for all $t \leq 0$, the measure $\mu$ is continuous and concentrated on $(0,\infty)$.
Hence, we can choose $\theta \in C_c^1(\R)$ such that $\theta(t)=(r-t)^{n-1} \kik{[o,r]}(t)$ for $t \geq 0$ and
\begin{align*}
\int_{\R} \theta(t) \mu(t) &= \int_{0}^r  (r-t)^{n-1} d\mu(t), \\
-\int_{\R} j(t) \frac{\dif}{dt}\theta(t) dt &= (n-1)\int_0^r (r-t)^{n-2} g^{(n-1)}(t) dt.
\end{align*}
Hence we get \eqref{eq:bv} for $r>0$.
For $r<0$, we get similarly a signed and continuous Radon measure $\nu$ concentrated on $(-\infty,0)$ satisfying \eqref{eq:bv}.
Now taking the sum of $\mu$ and $\nu$, \eqref{eq:bv} still holds.

Using integration by parts, combined with $(\ast)$ and \eqref{eq:bv}, we have
\begin{align*}
\frac{1}{(n-1)!}\int_0^r (r-t)^{n-1} d \mu(t)
&=\frac{1}{(n-2)!}\int_0^r (r-t)^{n-2} g^{(n-1)}(t) dt \\
&=\frac{1}{(n-3)!}\int_0^r (r-t)^{n-3} g^{(n-2)}(t) dt \\
&=\cdots =\int_0^r  g^{(1)}(t) dt=g(r)=r^nZ(T^n)(re_1)
\end{align*}
for any $r \neq 0$.
For any $x \in \ro{n}$, since the function $s \mapsto Z(s^{1/n}T^n)(s^{-1/n}x)$ is regular on $(0,\infty)$, Lemma \ref{lem:fe1} (b) shows that
\begin{align}\label{homco1}
Z (sT^{n}) (s^{-1}x) = s^n Z(T^n)(x)
\end{align}
for any $s>0$.
Hence
\begin{align*}
  Z (sT^{n}) (re_1) = s^n Z(T^n)(rs)&=\frac{1}{(n-1)!r^n}\int_0^{rs} (rs-t)^{n-1} d \mu(t) \\
  &=\frac{1}{(n-1)!r}\int_0^{rs} (s-t/r)^{n-1} d \mu(t) \\
  &=\frac{1}{r}\int_0^{rs} V_{n-1}(sT^n \cap H_{re_1,t}) d \mu(t) \\
  &=\frac{1}{|r|}\int_{-h_{sT}(-re_1)}^{h_{sT}(re_1)} V_{n-1}(sT^n \cap H_{re_1,t}) d \mu(t) \\
  &= \frac{1}{|r|}\int_{\R} V_{n-1}(sT^n \cap H_{re_1,t}) d \mu(t).
\end{align*}

Now we show that the function $g$ has the desired property $(\ast)$ if $ZT^n \in \CFr{n}$.
We claim that the function $r \mapsto r ZT^n(re_1+\dots+re_m)$ is differentiable on $\ro{}$ and
\begin{align}\label{eq:dif2}
&\frac{\dif (rZT^n(re_1+\dots+re_m))}{\dif r} \notag\\
&=m ZT^n(re_1+\dots+re_{m+1})-(m-1)ZT^n(re_1+\dots+re_{m})
\end{align}
for any $r \neq 0$ and $1\leq m \leq n-1$.
Indeed, set $y_k=te_3+\dots+te_{k+2}+re_{k+3}+\dots+re_{m+1}$ for integer $k \in [0,m-1]$.
By \eqref{eq:fe2}, \eqref{homco1}, \eqref{38} and \eqref{40},
\begin{align}\label{eq:dif1}
ZT^n(re_1+te_2+y_k)=\frac{r ZT^n(re_1+y_k)-tZT^n(te_1+y_k)}{r-t}
\end{align}
for any $r,t \neq 0$ such that $rt>0$.
Take the sum of \eqref{eq:dif1} over all $k$ from $0$ to $m-1$ and let $t \to r$.
By $ZT^n(\cdot) \in \CFo{n}$, we have
\begin{align*}
&mZT^n(re_1+\dots+re_{m+1})\\
&=\lim_{t \to r}\frac{r ZT^n(re_1+y_0)-tZT^n(te_1+y_{m-1})}{r-t} + \sum_{k=0}^{m-2}\frac{-tZT^n(te_1+y_k)+rZT^n(re_1+y_{k+1})}{r-t}.
\end{align*}
Together with the permutation invariance of $ZT^n$, we get \eqref{eq:dif2}.

Let $\tilde{g}_0(r)=ZT^n(re_1)$.
The relation \eqref{eq:dif2} shows that we can define functions $\tilde{g}_k$ on $\ro{}$ for $k=1,\dots n-1$ inductively by $\tilde{g}_{k}(r)=\frac{\dif}{\dif r}(r\tilde{g}_{k-1}(r))$ and $\tilde{g}_{k}(r)$ is a linear combination of $ZT^n(re_1),\dots,ZT^n(re_1+\dots+re_{k+1})$.
Using the chain rule formula inductively, $g^{(k)}(r)$ is a linear combination of $r^{n-k}\tilde{g}_0(r),\dots, r^{n-k}\tilde{g}_k(r)$ for $k=0,\dots n-1$.
Hence $g$ has the desired properties $(\ast)$ since $ZT^n \in \CFr{n}$.

For $ZT^n \in \ACF{n}$, the argument in the last paragraph shows that $g^{(n-1)}$ is absolutely continuous on any interval in $\R$ after setting $g^{(n-1)}(0)=0$.
Hence $\mu$ has a density $\zeta$. The desired result follows from Fubini's formula \eqref{eq:fubini}.
For $ZT^n \in \CSF{n}$, notice that now $g:r \mapsto r^nZ(T^n)(re_1) \in \CSF{}$.
We just need to set $\zeta(t)=g^{(n)}(t)$ for all $t \in \R$, and obtain $\zeta \in \CSF{}$.
\end{proof}

The following theorem follows directly from \eqref{eq:fe2}, \eqref{30}, Lemma \ref{lem:fe1} (c) and Lemma \ref{lemuq}.
\begin{thm}\label{thm:simgl0}
Let $n \ge 3$. A map $Z: \mpo{n} \to \FFo{n}$ is a simple and $\GL{+}{n}$ covariant valuation of weight $0$
if and only if
\begin{align*}
ZP(x)= 0
\end{align*}
for every $P \in \mpo{n}$ and $x \in \ro{n}$.
\end{thm}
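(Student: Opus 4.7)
The ``if'' direction is immediate: the zero map is trivially a simple $\GL{+}{n}$ covariant valuation of weight $0$. For the ``only if'' direction, my plan is to follow the hint and combine the functional-equation machinery of Section \ref{sec:fe} with the weight-$0$ scaling property. Set $f(s;y) := Z(sT^n)(y)$ for $s>0$ and $y \in \ro{n}$. Since $\GL{+}{n}$ covariance of weight $0$ restricts to $\SLn$ covariance on $\SLn$-transforms, $Z$ is $\SLn$ covariant, and hence the dissection identity \eqref{30} applies. Simplicity of $Z$ forces $Z(s\hat{T}^{n-1}) = 0$ because $\hat{T}^{n-1}$ is $(n-1)$-dimensional, so taking $d = n$ in \eqref{30} collapses exactly to the functional equation \eqref{30a} of Lemma \ref{lem:fe1}; the required permutation invariance (even permutations suffice for $d = n \ge 3$) comes from $\SLn$ covariance applied to permutation matrices lying in $\SLn$.

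Next I would extract the scale hypothesis of Lemma \ref{lem:fe1}(c) directly from the weight-$0$ assumption. Applying covariance with $\phi = s^{1/n} I \in \GL{+}{n}$ (whose determinant is $s>0$) gives $Z(s^{1/n} T^n)(x) = ZT^n(s^{1/n} x)$, and substituting $s^{-1/n}x$ for $x$ yields
\begin{equation*}
f(s^{1/n}; s^{-1/n} x) = Z(s^{1/n} T^n)(s^{-1/n} x) = ZT^n(x) = f(1;x)
\end{equation*}
for every $s>0$ and $x \in \ro{n}$. Since $n \ge 3$, Lemma \ref{lem:fe1}(c) then forces $f \equiv 0$, that is, $Z(sT^n)(x) = 0$ for all $s>0$ and $x \in \ro{n}$.

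To finish, I would invoke simplicity once more: $Z(sT^d) = 0$ for every $0 \le d < n$ and $s>0$, because $sT^d$ is lower-dimensional. Combined with the vanishing on $sT^n$, Lemma \ref{lemuq} applied with $Z' \equiv 0$ gives $ZP = 0$ for every $P \in \mpo{n}$. The only conceptual point, and the main thing worth highlighting, is that weight-$0$ covariance delivers the scaling identity needed in Lemma \ref{lem:fe1}(c) essentially for free, bypassing the Cauchy-functional-equation argument that was required in Lemma \ref{lem:simple1} for the regular simple case; once this substitution $\phi = s^{1/n} I$ is identified, no genuine obstacle remains and the conclusion is a one-line application of the two lemmas.
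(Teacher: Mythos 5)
Your proof is correct and follows essentially the same route as the paper, which deduces Theorem \ref{thm:simgl0} directly from \eqref{eq:fe2}, the dissection identity \eqref{30} (collapsed by simplicity to \eqref{30a}), the scaling hypothesis of Lemma \ref{lem:fe1}(c) supplied by weight-$0$ covariance, and Lemma \ref{lemuq}. Your explicit substitution $\phi=s^{1/n}I$ is exactly the step the paper leaves implicit, so there is nothing to add.
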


\vskip 30pt

\section{On lower-dimensional sets}
In this section, we prove the following theorem.
\begin{thm}\label{thm:low}
Let $n \ge 3$.
If $Z: \mpo{n} \to \FFo{n}$ is an $\sln$ covariant valuation and the function $r \mapsto ZT^{n-1}(\pm re_n)$ is regular on $(0,\infty)$,
then there are functions $\zeta_1,\zeta_2 \in \FF{}$ and a constant $c_{n-1} \in \R$ such that
\begin{align}\label{eq:low2}
ZP(x)&= \zeta_1(h_P(x)) +\zeta_1^R(h_{-P}(x)) + \eutr{-}{\zeta_2}(P)(x) + \eutr{-}{\zeta_2^R}(-P)(x) + \frac{c_{n-1}}{(n-1)! |x|}V_{n-1}(P \cap H_{x,o})
\end{align}
for every $x \in \ro{n}$ and $P \in \mpo{n}$ with $\dim P <n$.

Moreover, if $ZP \in \CFo{n}$ for every $P \in \mpo{n}$, then $\zeta_1,\zeta_2 \in \CF{}$ and $c_{n-1}=0$.
\end{thm}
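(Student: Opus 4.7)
The plan is to prove the formula by induction on $d=\dim P$, ranging from $0$ up to $n-1$, first for the standard simplex $sT^d$ and then extending to all $P\in\mpo{n}$ with $\dim P<n$ via a variant of the uniqueness Lemma \ref{lemuq} restricted to lower-dimensional polytopes. The parameters $\zeta_1,\zeta_2,c_{n-1}$ are pinned down one stage at a time from $Z$'s values on test simplices.

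For $d=0$, $\sln$-transitivity on $\ro{n}$ (using $n\ge 3$) forces $Z\{o\}$ constant, and this matches $2\zeta_1(0)+2\zeta_2(0)$. For $d=1$, the stabiliser of $T^1$ in $\sln$ shows that $ZT^1(x)$ depends only on $x_1$ when $x_1\ne 0$ and is a separate constant otherwise, allowing $\zeta_1,\zeta_2$ to be read off on $\R\setminus\{0\}$. For $2\le d\le n-2$ the section-volume term vanishes identically (as $V_{n-1}(P\cap H_{x,o})=0$ whenever $\dim P\le n-2$); I would form the error
$$E(P)(x)=ZP(x)-\Bigl[\zeta_1(h_P(x))+\zeta_1^R(h_{-P}(x))+\eutr{-}{\zeta_2}(P)(x)+\eutr{-}{\zeta_2^R}(-P)(x)\Bigr],$$
which by Lemmas \ref{lem:suppf} and \ref{lem:val1} is an $\sln$-covariant valuation of weight $0$. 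The inductive hypothesis gives $E\equiv 0$ on polytopes of dimension $<d$, so \eqref{30} collapses to the homogeneous equation \eqref{30a} for $f(s;y)=E(sT^d)(y)$; after fixing the still-free values of $\zeta_1,\zeta_2$ so that $E(sT^d)(re_1+\dots+re_m)=0$ for $1\le m\le d$, Lemma \ref{lem:fe1}(a) delivers $E(sT^d)\equiv 0$.

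The crucial case is $d=n-1$, where $sT^{n-1}\subset e_n^{\perp}$ and the section-volume term is active only for $x\parallel e_n$. Two auxiliary ingredients come in: first, the parity $ZT^{n-1}(re_n)=ZT^{n-1}(-re_n)$, obtained from an $\sln$-element combining an odd permutation of $\{e_1,\dots,e_{n-1}\}$ with $e_n\mapsto-e_n$ (determinant $1$ for $n\ge 3$); second, the covariance scaling $E(sT^{n-1})(re_n)=E(T^{n-1})(s^{-(n-1)}re_n)$, derived from $\phi=\mathrm{diag}(s^{1/(n-1)},\dots,s^{1/(n-1)},s^{-1})\in\sln$. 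Plugging $x=re_n$ into \eqref{30} (where both $\phi_\lambda^t$ and $\psi_\lambda^t$ fix $e_n$) and using the inductive vanishing on $\hat T^{n-2}$ yields a Cauchy equation $h(a+b)=h(a)+h(b)$ for $h(u)=E(u^{1/n}T^{n-1})(u^{-1/n}re_n)$; the regularity hypothesis on $r\mapsto ZT^{n-1}(\pm re_n)$ makes $h$ regular, hence $h(u)=uh(1)$, which combined with the covariance scaling forces $r\mapsto rE(T^{n-1})(re_n)$ to be a single constant $C$ on $(0,\infty)$, and parity extends this to $E(T^{n-1})(re_n)=C/|r|$. Setting $c_{n-1}=((n-1)!)^2C$ matches the section-volume term; for $x\not\parallel e_n$, a further $\sln$-covariance (using upper-triangular maps that fix $T^{n-1}$ and freely shift the $e_n$-component) reduces to the case $x_n=0$, which is then handled by Lemma \ref{lem:fe1}(a) exactly as in the earlier stages. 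For the moreover clause, continuity of $ZT^1$ forces $\zeta_1,\zeta_2\in\CF{}$, while continuity of $ZT^{n-1}$ at $x=e_n$ is incompatible with nonzero $c_{n-1}$ because $x\mapsto V_{n-1}(T^{n-1}\cap H_{x,o})/|x|$ jumps there. The principal obstacle is precisely the $d=n-1$ case: isolating the section-volume contribution, which is supported on a single exceptional direction, requires all three ingredients (parity, Cauchy/regularity, and an auxiliary $\sln$-scaling).
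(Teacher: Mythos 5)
There is a genuine gap in the central uniqueness step. For $2\le d\le n-2$ (and again for $d=n-1$ at generic $x$) you invoke Lemma \ref{lem:fe1}(a), whose hypothesis is that the error vanishes at all diagonal points, $E(sT^d)\bigl(r(e_1+\dots+e_m)\bigr)=0$ for every $s>0$, $r\ne 0$, $1\le m\le d$, and you propose to secure this ``by fixing the still-free values of $\zeta_1,\zeta_2$''. This cannot work: already your $d=1$ claim is inaccurate, since $ZT^1$ alone only determines the combination $\zeta_1-\zeta_2$ up to constants (one needs $ZT^2(re_1)$ as well, which is exactly how the paper defines $\zeta_1,\zeta_2$ in \eqref{zeta1}--\eqref{zeta2}); and once $\zeta_1,\zeta_2$ are pinned down at stages $d\le 2$ there is no freedom left at stage $d\ge 3$, while in any case a two-parameter family of constraints (all $s>0$, $r\ne 0$) cannot be met by choosing finitely many leftover function values. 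The diagonal vanishing has to be \emph{derived}, and the paper's mechanism for this is precisely the homogeneity mismatch: by Lemma \ref{lem1} the map $s\mapsto E(s^{1/n}T^d)(s^{-1/n}x)$ is constant for lower-dimensional simplices (so regular for free), while the Cauchy equation coming from \eqref{30a} forces it to be linear in $s$ (Lemma \ref{lem:fe1}(b)), whence it is zero (Lemma \ref{lem:fe1}(c)). You do use exactly this scaling idea for $d=n-1$ in the direction $re_n$, but you omit it where it is indispensable, namely on the diagonals in the lower-dimensional stages; without it your appeal to Lemma \ref{lem:fe1}(a) is unsupported.

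A related omission: your induction for $d\le n-2$ only controls $E(sT^d)(x)$ for $x$ in the linear span of $T^d$, yet the $d=n-1$ step needs the ``inductive vanishing on $\hat T^{n-2}$'' at $x=re_n$, i.e.\ in a direction orthogonal to the simplex. Handling these orthogonal directions is the content of the paper's Lemma \ref{lem:codim} (the chain $ZT^{n-2}(re_n)=\dots=ZT^1(re_n)$ together with the constants absorbed into $\zeta_1(0),\zeta_2(0)$), and your plan never establishes it. The remaining ingredients you list --- parity of $ZT^{n-1}$ in $e_n$, the Cauchy equation plus the regularity hypothesis giving the $|r|^{-1}$ term, and the discontinuity argument forcing $c_{n-1}=0$ in the ``moreover'' part --- do match the paper; note only that continuity of $\zeta_1,\zeta_2$ \emph{at} $0$ is not automatic from continuity of $ZT^1,ZT^2$ away from the origin and needs a separate limit argument (the paper uses $Z[o,e_2](-e_1+re_2)$ and $Z[o,e_1,e_2](-e_1+re_2)$ as $r\to 0$).
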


First, we show the following property of an $\sln$ covariant map.
\begin{lem}\label{lem1}
Let $n \geq 2$ and $Z : \mathcal{P}_o ^n \to \FFo{n}$ be an $\SLn$ covariant map. If the linear hull of $P \in \mpo{n}$ is $\lin \{e_1,\dots,e_d\}$ for some $0 \leq d < n$, then
\begin{align*}
  Z(sP) (x) =
\begin{cases}
ZP (s(r_1e_1 + \dots + r_de_d)), &r_1e_1 + \dots + r_de_d \neq o, \\
ZP (e_{n}),                    &r_1e_1 + \dots + r_de_d = o , d \leq n-2  , \\
ZP (s^{-(n-1)}r_n e_n),       & r_1e_1 + \dots + r_de_d = o , d=n-1
\end{cases}
\end{align*}
for every $s>0$ and $x = r_1e_1 + \dots + r_ne_n \in \ro{n}$.
\end{lem}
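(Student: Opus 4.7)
The plan is to exploit $\sln$ covariance directly: once I produce $\phi \in \sln$ with $\phi P = sP$, covariance yields $Z(sP)(x) = Z(\phi P)(x) = ZP(\phi^t x)$, so the task reduces to engineering $\phi^t x$ to equal the target vector claimed in each of the three cases. Decompose $x = y + z$ with $y = r_1 e_1 + \dots + r_d e_d \in \lin\{e_1,\dots,e_d\}$ and $z = r_{d+1} e_{d+1} + \dots + r_n e_n \in \lin\{e_{d+1},\dots,e_n\}$. For $d \ge 1$ I use the block-triangular ansatz
\begin{align*}
[\phi] = \begin{pmatrix} s\, I_d & A \\ 0 & M \end{pmatrix},
\end{align*}
with an $(n-d) \times (n-d)$ matrix $M$ satisfying $\det M = s^{-d}$. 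Such a $\phi$ automatically lies in $\sln$ and acts as $s\cdot\mathrm{id}$ on $\lin\{e_1,\dots,e_d\} \supseteq P$, hence $\phi P = sP$; a direct computation gives $\phi^t x = s y + (A^t y + M^t z)$, where the second summand lies in $\lin\{e_{d+1},\dots,e_n\}$.

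Now I specialise case by case. If $y \ne o$, I set $M = s^{-d/(n-d)} I_{n-d}$ and pick $A$ so that $A^t y = -M^t z$, which is solvable because $y \ne o$ lets the map $A^t \colon \R^d \to \R^{n-d}$ hit any prescribed vector; then $\phi^t x = sy$ and covariance produces $Z(sP)(x) = ZP(s(r_1 e_1 + \dots + r_d e_d))$. If $y = o$ and $1 \le d \le n-2$, I take $A = 0$ and exploit $n - d \ge 2$: the group $\SL{n-d}$ acts transitively on $\R^{n-d} \setminus \{o\}$, so after absorbing a suitable scalar I obtain $M$ with $\det M = s^{-d}$ and $M^t z = e_n$ (identifying $\R^{n-d}$ with $\lin\{e_{d+1},\dots,e_n\}$), whence $\phi^t x = e_n$. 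If $y = o$ and $d = n-1$ the scalar $M = s^{-(n-1)}$ is forced by the determinant constraint, $A$ contributes nothing, and $\phi^t x = s^{-(n-1)} r_n e_n$. The degenerate case $d = 0$ (so $P = \{o\} = sP$) needs a short separate argument: every $\phi \in \sln$ fixes $P$, so $ZP(x) = ZP(\phi^t x)$ for all such $\phi$, and transitivity of $\sln$ on $\R^n \setminus \{o\}$ lets me choose $\phi^t x = e_n$.

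The only real subtlety is the transition from $d \le n-2$ to $d = n-1$ when $y = o$: once the complementary subspace $\lin\{e_{d+1},\dots,e_n\}$ drops to dimension one, the transitive $\SL{n-d}$-action disappears, $M$ is pinned down entirely by $\det M = s^{-d}$, and the formula necessarily carries the twist $s^{-(n-1)}$ instead of collapsing to the canonical $e_n$ available in higher codimension. Everything else is routine block-matrix bookkeeping.
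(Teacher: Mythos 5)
Correct, and essentially the paper's own approach: you use block upper-triangular elements of $\sln$ that act as multiplication by $s$ on $\lin\{e_1,\dots,e_d\}$, combined with transitivity of the complementary block on nonzero vectors, with the same three-way case split and the same forced scalar $s^{-(n-1)}$ when $d=n-1$. The only cosmetic difference is that the paper first treats $s=1$ with a determinant-one block matrix and then composes with the diagonal map $e_i\mapsto se_i$ for $i\le d$ and $e_j\mapsto s^{-d/(n-d)}e_j$ for $j>d$, which is precisely the factorization of your single matrix.
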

\begin{proof}
First let $s=1$.
Set $\phi := \left[ {\begin{array}{*{20}{c}}
I_d &A \\
0& B
\end{array}} \right] \in \SLn$, where $I_d \in \mathbb{R}^{d \times d}$ is the identity matrix, $A \in \mathbb{R}^{d \times (n-d)}$ is an arbitrary matrix, $0 \in \mathbb{R}^{(n-d)\times d}$ is the zero matrix, $B \in \mathbb{R}^{(n-d) \times (n-d)}$ is an arbitrary matrix with the determinant $1$.
Also, let $x'=r_1e_1 + \dots + r_de_d$ and $x''=x-x'$.
Combining with $\phi P = P$ and the $\SLn$ covariance of $Z$, we have
\begin{align*}
Z(P) (x) = Z(\phi P) (x) = Z(P) (\phi ^t x) = Z(P) \left( {\begin{array}{*{20}{c}}
{x'}\\
{A^t x' + B^t x''}
\end{array}} \right).
\end{align*}
If $x' \neq o$, we can choose appropriate matrices $A$ and $B$ such that $A^t x' + Bx''=0$.
Hence $ZP (x) = ZP (x')$ for $x' \neq 0$.
If $x'=o$, then $x'' \neq o$.
For $d \leq n-2$, we can choose an appropriate matrix $B$ such that $Bx''=e_n$ and then $Z(P)(x) = ZP(e_n)$ for every $x' =o$.
For $d=n-1$, the statement is trivial for $x'=o$.
Hence we confirmed the case $s=1$ for arbitrary $x \neq o$.

Let $s>0$. The above argument gives that
\begin{align*}
  ZP \left(sx'+s^{-d/(n-d)}x''\right)=\begin{cases}
    ZP (sx'), &x' \neq o, \\
    ZP (e_{n}), &x' = o , d \leq n-2  , \\
    ZP (s^{-(n-1)}r_n e_n), & x' = o , d=n-1
\end{cases}.
\end{align*}
Define $\psi \in \sln$ by $\psi e_i =se_i$ for $1 \leq i \leq d$ and $\psi e_j = s^{-d/(n-d)} e_j$ for $d+1 \leq j \leq n$.
Since $Z$ is $\SLn$ covariant,
we have
\begin{align*}
  Z(sP)(x)=Z(\psi P)(x)=ZP(\psi^t x)=ZP \left(sx'+s^{-d/(n-d)}x''\right)
\end{align*}
Hence the desired statement holds for any arbitrary $s>0$.
\end{proof}

Now we consider $\SLn$ covariant valuations.

\begin{lem}\label{lem:codim}
Let $Z: \mpo{n} \to \FFo{n}$ be an $\SLn$ covariant valuation and let the function $r \mapsto ZT^{n-1}(\pm re_n)$ be regular on $(0,\infty)$.
If $n\ge 3$, then there are $c_{n-1}, c \in \R$ such that
\begin{align*}
ZT^{n-1}(re_n) = c_{n-1} |r|^{-1} +c, ~~ZT^d(re_n)=c
\end{align*}
for any $r \neq 0$, $1 \leq d \leq n-2$.
If $n = 2$ and $Z$ is $\GL{}{2}$ covariant of weight $0$, then there is $c \in \R$ such that
\begin{align*}
ZT^{1}(re_2) = c
\end{align*}
for any $r \neq 0$.
\end{lem}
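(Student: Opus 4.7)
The plan is to proceed in three stages: first show that $r\mapsto ZT^d(re_n)$ is constant on $\R\setminus\{0\}$ for $1\le d\le n-2$ and that all these constants coincide; then derive a functional equation for $f(t):=ZT^{n-1}(te_n)$ and solve it using the regularity hypothesis; finally handle the planar case $n=2$ separately.

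Since $T^d$ has linear hull $\lin\{e_1,\dots,e_d\}$ and the point $re_n$ has vanishing first $d$ coordinates, Lemma~\ref{lem1} with $s=1$ immediately yields $ZT^d(re_n)=ZT^d(e_n)=:a_d$ for every $r\ne 0$. For $n\ge 4$ and $2\le d\le n-2$, I substitute $x=re_n$ into \eqref{30}; since $\phi_\lambda$ and $\psi_\lambda$ fix $e_n$, each of the three terms involving a scaled $T^d$ reduces via a further application of Lemma~\ref{lem1} to $a_d$. For the remaining $\hat T^{d-1}$ term I pick $\phi\in\sln$ with $\phi T^{d-1}=\hat T^{d-1}$ and $\phi^t e_n=\pm e_n$: the transposition of $e_2$ and $e_d$ sends $T^{d-1}$ to $\hat T^{d-1}$, and composing with the reflection $e_n\mapsto -e_n$ (harmless because $e_n\notin\{e_2,e_d\}$ when $d\le n-2$) restores determinant $+1$. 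Then $\sln$-covariance followed by Lemma~\ref{lem1} evaluates this term as $a_{d-1}$, and \eqref{30} collapses to $a_d+a_{d-1}=2a_d$. Hence $a_1=\dots=a_{n-2}=:c$.

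For $d=n-1$, Lemma~\ref{lem1} gives $Z(sT^{n-1})(re_n)=f(s^{-(n-1)}r)$. The $\hat T^{n-2}$ term in \eqref{30} is handled as above and contributes $c$, while the exponent identity $(\lambda^{1/n}s)^{-(n-1)}\lambda^{-1/n}=\lambda^{-1}s^{-(n-1)}$ turns the scaled $T^{n-1}$ terms into $f(\lambda^{-1}t)$ and $f((1-\lambda)^{-1}t)$ with $t:=s^{-(n-1)}r$; as $s>0$ and $r\ne 0$ vary, $t$ ranges over $\R\setminus\{0\}$, so $g:=f-c$ satisfies
\[
g(t)=g(\lambda^{-1}t)+g((1-\lambda)^{-1}t),\qquad \lambda\in(0,1),\ t\ne 0.
\]
The key move is to set $h(a):=g(1/a)$: substituting $a=\lambda/t$ and $b=(1-\lambda)/t$ (so $a+b=1/t$) turns this into Cauchy's equation $h(a+b)=h(a)+h(b)$ on each open half-line. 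The regularity of $r\mapsto ZT^{n-1}(\pm re_n)$ transfers through $r\mapsto 1/r$ to regularity of $h$, so the standard fact cited at the start of \S\ref{sec:fe} yields $g(t)=b_+/t$ for $t>0$ and $g(t)=b_-/t$ for $t<0$. Finally, the $\sln$-map obtained by composing the transposition $e_1\leftrightarrow e_2$ with the reflection $e_n\mapsto -e_n$ fixes $T^{n-1}$ setwise and negates $e_n$, forcing $f$ to be even; hence $b_+=-b_-$, and with $c_{n-1}:=b_+$ I arrive at $f(r)=c+c_{n-1}/|r|$.

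For $n=2$, the diagonal matrix $\operatorname{diag}(1,b)\in\GL{}{2}$ preserves $T^1=[o,e_1]$ for every $b\ne 0$ and its transpose sends $e_2$ to $be_2$, so $\GL{}{2}$-covariance of weight $0$ gives $ZT^1(re_2)=ZT^1(bre_2)$; choosing $b=1/r$ yields the claim. The main obstacle is the second stage: spotting the inversion $a\mapsto 1/a$ that converts the multiplicative scaling relation into Cauchy's equation, and then carefully decoupling the positive and negative half-lines so that the symmetry argument can fuse the two branches into the predicted $|r|^{-1}$ shape.
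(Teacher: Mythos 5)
Your proof is correct and follows essentially the same route as the paper's: Lemma \ref{lem1} combined with \eqref{30} to chain the constants $ZT^d(re_n)$ for $d\le n-2$, the inversion $r\mapsto 1/r$ converting the scaling relation for $ZT^{n-1}$ into Cauchy's equation solved via regularity, evenness in $r$ from the $\sln$ map swapping $e_1,e_2$ and negating $e_n$, and the diagonal $\GL{}{2}$ matrix for $n=2$. The only cosmetic differences are that you solve the functional equation on both half-lines before symmetrizing (the paper treats $r>0$ and then reflects), and that for $d=2$ your prescribed map ``transposition of $e_2,e_d$ composed with the reflection'' degenerates to determinant $-1$ --- harmless, since there $\hat{T}^{1}=T^{1}$ and no auxiliary map is needed.
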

\begin{proof}
First let $n=2$.
For an arbitrary $r \neq 0$, we define $\phi \in \GL{}{2}$ by the following:
\begin{align*}
\phi e_{2}=re_{2} \text{~and~} \phi e_1=e_1.
\end{align*}
Since $Z$ is $\GL{}{2}$ covariant of weight $0$, we have
\begin{align*}
Z(T^{1})(e_2) = Z(\phi T^{1})(e_2) = Z(T^{1})(re_2),
\end{align*}
which implies the desired result.

Now let $n\ge 3$ and $r>0$.
Combining Lemma \ref{lem1} with \eqref{30} for $x=re_n$ and $d \leq n-2$, we have
$Z(T^{d}) (e_n)=Z(T^{d}) (re_n) = Z (\widehat{T}^{d-1}) (re_n)$.
Using the $\SLn$ covariance of $Z$, we have
\begin{align*}
Z(T^{n-2})(re_n)=\dots=Z(T^{1})(re_n)=:c.
\end{align*}
Now using \eqref{30} for $x=re_n$ and $d = n-1$ together with the $\SLn$ covariance of $Z$,
we have
\begin{align}\label{eq:vallow1}
ZT^{n-1}(re_n) + c = ZT^{n-1}(\lambda^{-1}re_n) + ZT^{n-1}((1-\lambda)^{-1}re_n)
\end{align}
for any $r > 0$.
Set $g(r)=ZT^{n-1}(r^{-1}e_n)$.
For arbitrary $a,b>0$, we can choose $r=(a+b)^{-1}$ and $\lambda=\frac{a}{a+b}$ in \eqref{eq:vallow1}.
Hence
\begin{align*}
g(a+b)+c = g(a)+g(b)
\end{align*}
for any $a,b>0$.
Now the regular function $g-c$ satisfying the Cauchy functional equation implies that
\begin{align*}
ZT^{n-1}(re_n) = c_{n-1}r^{-1} +c
\end{align*}
for any $r>0$, where $c_{n-1}$ is a constant.

Since $Z$ is $\sln$ covariant, we have $ZT^{d}(re_n)=ZT^{d}(-re_n)$ for every $d \leq n-1$ if $n \ge 3$,
which confirms the case $r<0$.
\end{proof}

\begin{lem}\label{lem:unique1}
For $n\ge 3$, let $Z: \mpo{n} \to \FFo{n}$ be an $\SLn$ covariant valuation and let the function $r \mapsto ZT^{n-1}(\pm re_n)$ be regular on $(0,\infty)$.
If \begin{align}\label{asm:uni}
Z\{o\}(e_n)=ZT^1(re_1)=ZT^2(re_1)=ZT^{n-1}(re_n)=0
\end{align}
for every $r \neq 0$, then $Z$ is simple.
\end{lem}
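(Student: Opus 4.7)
My plan is to first prove that $Z(sT^d) \equiv 0$ on $\ro{n}$ for every $s > 0$ and $0 \le d \le n-1$, and then extend to all $P \in \mpo{n}$ with $\dim P < n$. To set the stage, Lemma \ref{lem:codim} yields $ZT^d(re_n) = c$ for $1 \le d \le n-2$ and $ZT^{n-1}(re_n) = c_{n-1}|r|^{-1} + c$; the hypothesis $ZT^{n-1}(re_n) = 0$ forces $c = c_{n-1} = 0$, so $ZT^d(re_n) = 0$ for every $1 \le d \le n-1$. Combined with the assumption $Z\{o\}(e_n) = 0$, Lemma \ref{lem1} gives $Z\{o\} \equiv 0$ and, for each $1 \le d \le n-1$, reduces the task of proving $Z(sT^d) \equiv 0$ to showing $Z(sT^d)(y) = 0$ for $y \in \lin\{e_1, \ldots, e_d\} \setminus \{o\}$.

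For the simplex step I induct on $d$. The case $d = 1$ is immediate from the assumption $ZT^1(re_1) = 0$ together with Lemma \ref{lem1}. For $d \ge 2$, the inductive hypothesis $Z(sT^{d-1}) \equiv 0$ transfers to $Z(s\hat{T}^{d-1}) \equiv 0$ via an $\sln$-equivalence between $\hat{T}^{d-1} = [o, e_1, e_3, \ldots, e_d]$ and $T^{d-1}$ (the cyclic shift of $e_2, \ldots, e_d$, adjusted by a sign swap in a coordinate outside $\lin\{e_1, \ldots, e_d\}$ when the parity forces it, which is possible since $d \le n-1$). With this vanishing, the dissection equation \eqref{30} collapses to the Cauchy-type equation \eqref{30a} for $f(s; y) := Z(sT^d)(y)$. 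The degree-zero homogeneity $f(s^{1/n}; s^{-1/n} x) = f(1; x)$ is immediate from the first case of Lemma \ref{lem1}, and the permutation invariance of $f$ in the coordinates of $y$ follows from $\sln$ covariance applied to permutations of $\{e_1, \ldots, e_d\}$ (even for $d \ge 3$, arbitrary for $d = 2$), again corrected by sign flips in the auxiliary coordinates. Lemma \ref{lem:fe1}(c) then yields $f \equiv 0$, using the supplementary hypothesis $ZT^2(re_1) = 0$ when $d = 2$.

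For the final extension, I use $\sln$ covariance to place any given $P \in \mpo{n}$ with $\dim P = d < n$ inside $\lin\{e_1, \ldots, e_d\}$. Every $d$-simplex $[o, v_1, \ldots, v_d]$ with origin as a vertex in this subspace is $\sln$-equivalent to some $sT^d$: map $e_i \mapsto v_i/s$ for $i \le d$, choose $s > 0$ so the determinant has absolute value $1$, and absorb any minus sign via swapping two basis vectors in $\{e_{d+1}, \ldots, e_n\}$ (or combining a swap with a reflection on $e_n$ when $d = n-1$). Hence $ZS = 0$ for every such simplex. A triangulation of $P$ from the origin, together with the valuation identity $Z(A \cup B) = ZA + ZB - Z(A \cap B)$, gives $ZP = 0$ by an inner induction on the number of simplices, where lower-dimensional intersections of simplices are handled by an outer induction on $d$. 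The main delicate point throughout is the bookkeeping of sign corrections in the various $\sln$-equivalences, especially the swap-invariance of $ZT^2$ in the minimal case $n = 3$, where no free coordinate is available outside $\lin\{e_1, e_2\}$ and the sign must be absorbed by a reflection in $e_3$.
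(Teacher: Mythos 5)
Your argument follows the paper's proof essentially step by step in its main body: Lemma \ref{lem:codim} plus the hypothesis $ZT^{n-1}(re_n)=0$ to get $ZT^d(re_n)=0$ for $1\le d\le n-1$, Lemma \ref{lem1} to reduce everything to the subspace $\lin\{e_1,\dots,e_d\}$ and to supply the degree-zero homogeneity $f(s^{1/n};s^{-1/n}x)=f(1;x)$, the transfer of the induction hypothesis to $\hat{T}^{d-1}$ so that \eqref{30} collapses to \eqref{30a}, and Lemma \ref{lem:fe1}(c) (with $ZT^2(re_1)=0$ needed only for $d=2$) are exactly the paper's steps; your explicit check of the permutation hypothesis of Lemma \ref{lem:fe1} is a point the paper leaves implicit, and it is handled correctly.

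The one place where you genuinely deviate is the final passage from simplices with a vertex at the origin to arbitrary lower-dimensional $P\in\mpo{n}$, and as written this step has a gap. The identity $Z(A\cup B)=ZA+ZB-Z(A\cap B)$ is available only when $A$, $B$, $A\cup B$, $A\cap B$ all belong to $\mpo{n}$; in an ``inner induction on the number of simplices'' of a triangulation the partial unions $S_1\cup\dots\cup S_k$ are in general not convex, so $Z$ is not even defined on them and the inclusion--exclusion you invoke is not justified (no Volland-type extension is available either, since $Z$ is defined only on convex polytopes containing the origin). This is exactly what Lemma \ref{lemuq} encapsulates: its proof dissects a polytope by successive hyperplane cuts through the origin, so that at every stage the two pieces and the cut $P\cap H$ remain convex polytopes containing $o$, and an induction on dimension then shows that the values on lower-dimensional polytopes are determined by the values on $sT^d$, $d\le n-1$ --- which is how the paper concludes. (Strictly, Lemma \ref{lemuq} as stated assumes agreement also at $d=n$, but its dimension-wise proof yields the lower-dimensional statement needed here, which may be why you chose to reargue it.) To repair your version without citing Lemma \ref{lemuq}, organize the dissection of $P$ as repeated cuts by hyperplanes through $o$ (e.g.\ spanned by $o$ and $(d-2)$-faces of $P$), rather than as inclusion--exclusion over a completed triangulation; with that change the rest of your argument goes through.
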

\begin{proof}
By Lemma \ref{lemuq} and Lemma \ref{lem1}, we only need to show that $Z(T^d)(x)=0$ for any $x \in \ro{n}$ and $0 \leq d \leq n-1$. Lemma \ref{lem1} together with $Z\{o\}(e_n)=0$ implies that it holds for $d=0$.

Lemma \ref{lem:codim} implies that $ZT^{n-1}(re_n)=c_{n-1}|r|^{-1}+c$ and $ZT^k(re_n)=c$ for any $r \neq 0$ and $1\leq k\leq n-2$.
Combined it with the assumption that $ZT^{n-1}(re_n)=0$ for any $r \in \R$, we have
\begin{align}\label{eq:unq2}
ZT^d(re_n)=0
\end{align}
for all $1 \leq d \leq n-1$ and $r \neq 0$.
Hence, together with \eqref{asm:uni}, Lemma \ref{lem1} implies that our statement holds for $d=1$.

Now let $2 \leq d \leq n-1$. Assume that our statement holds for $d-1$.

Set
\begin{align}\label{eq:fe1}
f(s;r_1e_1+\dots +r_de_d)=Z(sT^{d})(r_1e_1+\dots +r_de_d).
\end{align}
Together with the induction assumption, \eqref{30} and the $\SLn$ covariance of $Z$, we get the equation \eqref{30a}.
Also, Lemma \ref{lem1} implies that $f(s^{1/n};s^{-1/n}x)=f(x)$ for any $s>0$, $x \in \ro{d}$ and $0 \leq d \leq n-1$.
Now Lemma \ref{lem:fe1} (c), together with the assumption $ZT^2(re_1)=0$ (only needed for $d=2$) gives that $ZT^d(x)=0$ for $x \in \ro{d}$.
Generally, for $x \in \ro{n}$, we write $x=x'+x''$ as in the proof of Lemma \ref{lem1}.
By \eqref{eq:unq2}, we have $Z(T^d)(re_n)=0$ for all $r \in \ro{}$.
Together with Lemma \ref{lem1}, we get $Z(sT^d)(x)=0$ for $x \in \ro{n}$.
\end{proof}

\begin{proof}[Proof of Theorem \ref{thm:low}]
By Lemma \ref{lem:codim}, there is a $c_{n-1}$ such that
\begin{align*}
ZT^{n-1}(re_n)=c_{n-1}|r|^{-1}+ZT^1(e_n).
\end{align*}
for any $r \neq 0$.
Define $\zeta_1,\zeta_2$ by
\begin{align}
&\zeta_1(r)=\begin{cases}
ZT^2(re_1)-\frac{1}{2}ZT^1(e_n), & r \neq 0,\\
\frac{1}{2}ZT^1(e_n), & r=0,
\end{cases} \label{zeta1} \\
&\zeta_2(r)=\begin{cases}
ZT^2(re_1)-ZT^1(re_1)+\frac{1}{2}(Z\{o\}(e_n)-ZT^1(e_n)), & r \neq 0,  \\
\frac{1}{2}(Z\{o\}(e_n)-ZT^1(e_n)), & r=0.
\end{cases} \label{zeta2}
\end{align}

Now we consider a new valuation which is the difference of $Z$ and the valuation given in Theorem \ref{thm:low} (with $\zeta_1,\zeta_2$ and $c_{n-1}$ defined above).
Elementary calculations show that the new valuation satisfies \eqref{asm:uni}.
Hence the statement of the first part follows from Lemma \ref{lem:unique1}.

If $ZT^1,ZT^2 \in \CFo{n}$, the definitions of $\zeta_1,\zeta_2$ show that $\zeta_1,\zeta_2 \in \CFo{}$.
Let $-1 < r <1$. The statement of the first part shows that
\begin{align*}
Z[o,e_2](-e_1+re_2)&=\zeta_1(r)+\zeta_1(0)-\zeta_2(r)+\zeta_2(0), \\
Z[0,e_1,e_2](-e_1+re_2)&=\begin{cases}
\zeta_1(r)+\zeta_1(-1),  & 0 \le r <1, \\
\zeta_1(0)+\zeta_1(-1)-\zeta_2(r)+\zeta_2(0), &-1<r<0.
\end{cases}
\end{align*}
Since $Z[o,e_2](-e_1+re_2) \to Z[o,e_2](-e_1)$ and $Z[0,e_1,e_2](-e_1+re_2) \to Z[0,e_1,e_2](-e_1)$ as $r \to 0$, we find that $\zeta_1,\zeta_2$ are continuous at $0$.
Hence $\zeta_1,\zeta_2 \in C(\R)$.

Now, only the function $x \mapsto \frac{1}{|x|}V_{n-1}(T^{n-1} \cap H_{x,o})$ is not continuous on $\ro{n}$.
Thus $c_{n-1}=0$.
\end{proof}

\section{Proofs of Theorems \ref{mthm:sln3}-\ref{thm:co0}}\label{sec:mainpr}

The ``if" parts of Theorems \ref{mthm:sln3}-\ref{thm:co0} are already completed by \S \ref{sec:val}.
Hence we only deal with the ``only if" parts.

\begin{proof}[Proof of Theorem \ref{mthm:sln}]
Since $ZT^{n-1} \in \CFo{n}$, the function $r \mapsto ZT^{n-1}(\pm re_n)$ is regular on $(0,\infty)$.
Hence Theorem \ref{thm:low} gives
\begin{align}\label{eq:low}
ZP(x)&= \zeta_1(h_P(x)) +\zeta_1^R(h_{-P}(x)) + \eutr{-}{\zeta_2}(P)(x) + \eutr{-}{\zeta_2^R}(-P)(x)
\end{align}
for every $x \in \R^n$ and $P \in \mpo{n}$ with $\dim P <n$, where $\zeta_1,\zeta_2 \in \CF{}$.
Let $\tilde{Z}P(x)=ZP(x)-(\zeta_1(h_P(x)) +\zeta_1^R(h_{-P}(x)) + \eutr{-}{\zeta_2}(P)(x) + \eutr{-}{\zeta_2^R}(-P)(x))$.
Clearly $\tilde{Z}: \mpo{n} \to \CFr{n}$ is a regular, simple and $\sln$ covariant valuation.
Now the desired result follows from Theorem \ref{mthm:simpsln}.
\end{proof}

Before proving Theorem \ref{thm:co0}, we first show the following.
\begin{lem}\label{lem:unique2}
Let $n=2$ and $Z: \mpo{2} \to \FFo{2}$ be a $\GL{}{2}$ covariant valuation of weight $0$.
If \begin{align*}
Z\{o\}(e_2)=ZT^1(re_1)=ZT^2(re_1)=ZT^{1}(e_2)=0
\end{align*}
for every $r \neq 0$, then $Z=0$.
\end{lem}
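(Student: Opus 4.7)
The plan is to mimic Lemma \ref{lem:unique1} but exploit the extra $\GL{}{2}$ freedom to compensate for the $n=2$ restriction in the functional-equation machinery. By the uniqueness Lemma \ref{lemuq}, it suffices to show $Z(sT^d)\equiv 0$ on $\ro{2}$ for every $s>0$ and $d\in\{0,1,2\}$. Taking $\phi=sI_2\in\GL{}{2}$ in the weight-$0$ definition gives the homothety rule $Z(sP)(x)=ZP(sx)$, so the problem reduces to proving $Z(T^d)(x)=0$ on $\ro{2}$ for each $d$.

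For $d=0$: $\{o\}$ is fixed by every $\phi\in\GL{}{2}$, so $Z\{o\}$ is invariant under the transitive action $x\mapsto\phi^tx$ on $\ro{2}$; hence it is constant and equals $Z\{o\}(e_2)=0$. For $d=1$: the stabiliser $\{\phi\in\GL{}{2}:\phi e_1=e_1\}$ of $T^1$ has transpose action $r_1e_1+r_2e_2\mapsto r_1e_1+(ar_1+br_2)e_2$ with arbitrary $a\in\R$ and $b\neq 0$. For $r_1\neq 0$ this orbit reaches $r_1e_1$ (take $b=1$, $a=-r_2/r_1$), giving $Z(T^1)(x)=ZT^1(r_1e_1)=0$; for $r_1=0$, $r_2\neq 0$ the orbit reaches $e_2$, giving $Z(T^1)(r_2e_2)=ZT^1(e_2)=0$.

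For $d=2$: apply the valuation identity \eqref{val} with $d=2$. In our ambient dimension $\hat{T}^1=[o,e_1]=T^1$, so the middle term $Z(\phi_\lambda s\hat{T}^1)$ vanishes by the previous step. Combining this with the weight-$0$ dilation, and setting $f(s;y):=Z(sT^2)(y)=ZT^2(sy)$, one obtains both the functional equation \eqref{30a} (with $n=2$) and the homogeneity $f(s^{1/n};s^{-1/n}x)=f(1;x)$. The permutation property \eqref{perm} for $d=2$ is the swap of coordinates, which follows from $\GL{}{2}$ covariance of weight $0$ applied to the orientation-reversing linear map interchanging $e_1$ and $e_2$ (it preserves $T^2$). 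Finally the extra hypothesis needed by Lemma \ref{lem:fe1}(c) for $d=2$, namely $f(s;re_1)=0$, is exactly $ZT^2(sre_1)=0$, supplied by the assumptions. Lemma \ref{lem:fe1}(c) then yields $f\equiv 0$, i.e.\ $Z(T^2)\equiv 0$, and Lemma \ref{lemuq} completes the proof.

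The only real subtlety is the $d=2$ case: because Lemma \ref{lem:fe1}(c) in dimension $d=2$ requires full coordinate-swap invariance (not just even permutations) plus a vanishing assumption on the coordinate axis, it cannot be run under bare $\sln$ covariance when $n=2$. The orientation-reversing symmetry provided by $\GL{}{2}$ covariance of weight $0$ is exactly what closes this gap, and this is why the statement of Lemma \ref{lem:unique2} strengthens $\sln$ covariance to $\GL{}{2}$ covariance of weight $0$ for $n=2$.
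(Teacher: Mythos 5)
Your proof is correct and follows essentially the same route as the paper: the paper's own (very terse) proof also reduces via Lemma \ref{lemuq} to the simplices $sT^d$, handles $d=0,1$ by stabiliser/covariance arguments (Lemma \ref{lem1} and Lemma \ref{lem:codim} for $n=2$, which is exactly your direct computation), and settles $d=2$ by the functional equation \eqref{30a} together with Lemma \ref{lem:fe1}(c), with the coordinate swap and the dilation identity $Z(sP)(x)=ZP(sx)$ supplied by $\GL{}{2}$ covariance of weight $0$. Your write-up merely makes explicit the details the paper leaves to the reader, including the correct observation of why $\GL{}{2}$ covariance (rather than mere $\SL{2}$ covariance) is what makes the $d=2$ step work when $n=2$.
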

\begin{proof}
Since we assume that $Z$ is $\GL{}{2}$ covariant of weight $0$, together with Lemma \ref{lem:codim} (for $n=2$), the proof is the same as the proof of Lemma \ref{lem:unique1} (for $n=3$).
\end{proof}

\begin{proof}[Proof of Theorem \ref{thm:co0}]
Let first $n \ge 3$.
Since $Z$ is $\GL{+}{n}$ covariant of weight $0$, $r \mapsto ZT^{n-1}(\pm re_n)=ZT^{n-1}(e_1)$ is regular on $(0,\infty)$.
Hence by Theorem \ref{thm:low}, \eqref{eq:low2} holds.
Only the valuation $P \mapsto \frac{1}{|x|}V_{n-1}(P \cap H_{x,o})$ is not $\GL{+}{n}$ covariant of weight $0$.
Similar to the proof of Theorem \ref{mthm:sln}, the desired result follows from Theorem \ref{thm:simgl0}.

Let $n=2$. By Lemma \ref{lem:unique2}, the argument is similar to the proof of Theorem \ref{thm:low}.
\end{proof}

The following lemma will be used in the proof of Theorem \ref{mthm:sln3}.
\begin{lem}\label{lem:contin2}
Let $\zeta_1,\zeta_2 \in \CF{}$ and $Z:\mpo{n} \to \FFo{n}$ defined by \eqref{eq:low}
for every $x \in \ro{n}$ and $P \in \mpo{n}$ with $\dim P<n$.
If $Z$ is continuous on $\mpo{n}$, then
\begin{align*}
 \zeta_2 \equiv 0.
\end{align*}
\end{lem}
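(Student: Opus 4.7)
The key mechanism is that $\eutr{-}{\zeta_2}$ undergoes a ``phase change'' as the origin crosses between the interior and boundary of $P$: when $o \in \relint P$ we have $\face{-}{}(P) = \{P\}$ and $\eutr{-}{\zeta_2}(P)(x) = (-1)^{\dim P}\zeta_2(h_P(x))$, a single term, whereas when $o$ is a vertex, $\face{-}{}(P)$ contains all faces through $o$ and the expression is a full alternating sum. Continuity of $Z$ must bridge these two forms in any limit, and this forces a cancellation condition on $\zeta_2$ strong enough to kill it.

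I will test this by taking $P = T^2 = [o,e_1,e_2]$, viewed as a two-dimensional polytope in $\R^n$ with $o$ as a vertex, and the perturbation $P_t = [-t(e_1+e_2), e_1, e_2]$ for $t \downarrow 0$. One verifies directly that $P_t \in \mpo{n}$ with $o$ in the relative interior of $P_t$, and $P_t \to P$ in the Hausdorff metric. I will evaluate everything at the very special direction $x = r(e_1+e_2)$ with $r > 0$, chosen so that the edges $[o,e_1], [o,e_2]$ and $P$ all share the same support value $r$, while every face of $-P$ through $o$ gives support value $0$. With these choices the alternating sums collapse to
\[
\eutr{-}{\zeta_2}(P)(x) = \zeta_2(0) - \zeta_2(r), \qquad \eutr{-}{\zeta_2^R}(-P)(x) = 0,
\]
while for $P_t$ the single-face formula gives $\eutr{-}{\zeta_2}(P_t)(x) = \zeta_2(h_{P_t}(x)) \to \zeta_2(r)$ and $\eutr{-}{\zeta_2^R}(-P_t)(x) = \zeta_2(-h_{-P_t}(x)) \to \zeta_2(0)$ as $t \to 0$.

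The $\zeta_1$-contributions depend only on support functions of $\pm P_t$, which converge to those of $\pm P$, so by continuity of $\zeta_1$ they contribute equally to $ZP(x)$ and $\lim_{t \to 0} ZP_t(x)$. Equating these two quantities using continuity of $Z$ cancels the $\zeta_1$ pieces and the $\zeta_2(0)$ terms, leaving the bare identity $-\zeta_2(r) = \zeta_2(r)$, so $\zeta_2(r) = 0$ for every $r > 0$. Repeating the argument at $x = r(e_1+e_2)$ with $r < 0$ (where the roles of $P$ and $-P$ essentially swap, with the single-face formula now applied to $-P_t$ through its relative interior) yields $\zeta_2(r) = 0$ for $r < 0$ by the same cancellation, and continuity of $\zeta_2$ then forces $\zeta_2(0) = 0$, giving $\zeta_2 \equiv 0$.

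The main obstacle is purely book-keeping: one must list precisely the faces of $P$ through $o$, identify those whose support functions agree at $x = r(e_1+e_2)$, and track the reflection in $\zeta_2^R$. The clever choice of $x$ is essential; a generic direction would yield a more tangled identity involving several values of $\zeta_2$ from which vanishing could not be read off directly.
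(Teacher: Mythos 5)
Your proof is correct, and it rests on the same mechanism as the paper's own argument: continuity of $Z$ along a family of two-dimensional polytopes in $\mpo{n}$ whose origin passes from the relative interior to the relative boundary of the limit body, which the $\zeta_1$-terms survive but the $\eutr{-}{\zeta_2}$-terms do not. The only difference is cosmetic — the paper degenerates rectangles $[-te_1,e_1]+[-e_2,re_2]$ (origin landing on an edge, direction $se_1$, giving $\zeta_2(s)+\zeta_2(-st)=0$), while you degenerate triangles onto $T^2$ (origin at a vertex, diagonal direction, giving $\zeta_2(r)=-\zeta_2(r)$ directly) — so this is essentially the paper's proof with a different choice of test bodies.
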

\begin{proof}
Let $r \ge 0$, $s,t>0$ and $P_{r}=[-te_1,e_1]+[-e_2,re_2]$.
By \eqref{eq:low}, we have
\begin{align*}
ZP_{r}(se_1)&= \zeta_1(s) + \zeta_1(-st)
+ \zeta_2(s)+\zeta_2(-st)
\end{align*}
for $r>0$ and
\begin{align*}
ZP_{0}(se_1)&= \zeta_1(s) + \zeta_1(-st)
\end{align*}
for $t>0$. Since $ZP_{r}(se_1) \mapsto ZP_0(se_1)$ for any $s>0$, we have
\begin{align}\label{eq:contin1}
\zeta_2(s)+ \zeta_2(-st)=0
\end{align}
for any $s,t>0$.
Since $\zeta_2\in \CF{}$, taking $s \mapsto 0$ in \eqref{eq:contin1}, we have $\zeta_2(0)=0$.
Next, taking $t \mapsto 0$, we have $\zeta_2(s)=0$ for any $s>0$.
Back to \eqref{eq:contin1}, we get $\zeta_2(s)=0$ for any $s<0$.
\end{proof}

\begin{proof}[Proof of Theorem \ref{mthm:sln3}]
Lemma \ref{lem:contin} shows that $ZP \in \CFo{n}$ for every $P \in \mpo{n}$.
Hence the same argument as in the proof of Theorem \ref{mthm:sln} shows that \eqref{eq:low} holds for $\zeta_1,\zeta_2 \in \CF{}$.
Together with the continuity of $Z$, by Lemma \ref{lem:contin2}, we have $\zeta_2 \equiv 0$.

Now it follows from Lemma \ref{lem:suppf} and Lemma \ref{lem:ref} that the map $\bar Z :P \mapsto ZP(x)-(\zeta_1(h_P(x)) +\zeta_1^R(h_{-P}(x))$ is a continuous, simple and $\sln$ covariant valuation.
By the remark after Theorem \ref{mthm:simpsln}, we only need to show that $s \mapsto \bar Z(sT^n)(s^{-1}x)$ is regular on $(0,\infty)$ and $\lim_{s \to 0} s \bar Z(T^n)(sx)=0$ for any $x \in \ro{n}$.
The last follows from Lemma \ref{lem:fe1} (b) (that \eqref{homco1} holds) and Lemma \ref{lem:contin}.
\end{proof}

\section{Euler-type relations}\label{sec:euler}
The maps $\eutr{\pm}{\zeta},\eutr{}{\zeta}$ are all $\GL{}{n}$ covariant valuations of weight $0$; see Lemma \ref{lem:val1}.
This fact together with Theorem \ref{thm:co0} gives a new proof of the following Euler-type relation (for all $\MP^n$).

\begin{mcor}\label{co:eusupp}
For $n \ge 0$, the Euler-type relation
\begin{align}\label{eq:eusupp}
\sum_{F \in \face{}{}(P)}(-1)^{\dim F}\zeta \left(h_{F}(x)\right)= \zeta(-h_{-P}(x))
\end{align}
holds for every $P \in \MP^n$, $\zeta \in \FF{}$ and $x \in \R^n$.
\end{mcor}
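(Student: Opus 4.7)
The plan is to exhibit both sides of \eqref{eq:eusupp} as $\SLn$ covariant function-valued valuations on $\MP^n$, verify they agree on simplices by a direct combinatorial calculation, and then conclude by Lemma \ref{lemuq2}.

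First, I will establish the valuation structure on both sides. The left-hand side $P \mapsto \eutr{}{\zeta}(P)$ is a $\GL{}{n}$ covariant valuation of weight $0$ on $\MP^n$ by Lemma \ref{lem:val1}. For the right-hand side, set $W(P)(x) := \zeta(-h_{-P}(x)) = \zeta^R(h_{-P}(x))$. The identities $h_{-(P \cup Q)} = \max(h_{-P}, h_{-Q})$ and $h_{-(P \cap Q)} = \min(h_{-P}, h_{-Q})$ yield the valuation property, while $h_{-\phi P}(x) = h_{-P}(\phi^t x)$ for $\phi \in \GL{}{n}$ gives $\GL{}{n}$ covariance of weight $0$. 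In particular, both sides of \eqref{eq:eusupp} are $\SLn$ covariant.

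Next, I will verify \eqref{eq:eusupp} on an arbitrary simplex $T = [v_0, \ldots, v_d]$. Its faces are $F_S := [v_i : i \in S]$ for nonempty $S \subseteq \{0, \ldots, d\}$, with $\dim F_S = |S|-1$ and $h_{F_S}(x) = \max_{i \in S}(x \cdot v_i)$. Relabeling so that $a_i := x \cdot v_i$ satisfies $a_0 \le \cdots \le a_d$, I get $h_{F_S}(x) = a_{\max S}$. Grouping the sum by $k := \max S$ gives
\[
\eutr{}{\zeta}(T)(x) = \sum_{k=0}^d \zeta(a_k) \sum_{T' \subseteq \{0, \ldots, k-1\}} (-1)^{|T'|} = \zeta(a_0),
\]
since the inner sum vanishes for $k \ge 1$ and equals $1$ for $k = 0$. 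On the other hand, the linear functional $x \cdot y$ attains its minimum on $T$ at a vertex, so $\zeta(-h_{-T}(x)) = \zeta(a_0)$ as well. Hence \eqref{eq:eusupp} holds on every simplex.

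Finally, for $n \ge 2$ I apply Lemma \ref{lemuq2} to the two $\SLn$ covariant function-valued valuations identified in the first paragraph: since $sT^d$ and $s[e_1, \ldots, e_d]$ are simplices for every $s > 0$ and $0 \le d \le n$, the preceding simplex computation handles them, and the lemma upgrades the identity to every $P \in \MP^n$. The cases $n = 0, 1$ are immediate by direct inspection, since $\MP^n$ is then built from points and segments, both of which are simplices covered by the simplex computation. The only substantive step is the combinatorial identity on simplices; the rest is a direct invocation of tools already built in the paper.
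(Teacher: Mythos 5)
Your proposal is correct, and it takes a genuinely different route from the paper. The paper proves the relation by feeding $\eutr{}{\zeta}$ into the classification Theorem \ref{thm:co0} on $\mpo{n}$, computing the resulting $\zeta_1,\zeta_2$ from the values on $T^1,T^2,\{o\}$ to settle the case $o\in P$, and then extending to translates $P+y$ via the rationality trick: the coefficients $a_i$ in \eqref{eq:loceu1} are integers independent of $\zeta$, so choosing $\zeta$ with $\Q$-independent values forces $a_i=0$. You instead verify the identity directly on an arbitrary simplex — your grouping by $k=\max S$ and the binomial cancellation $\sum_{T'\subseteq\{0,\dots,k-1\}}(-1)^{|T'|}=0$ for $k\ge 1$ is correct, as is $h_{F_S}(x)=a_{\max S}$ and $-h_{-T}(x)=\min_i a_i$ — and then invoke Lemma \ref{lemuq2}, after observing that both sides are $\SLn$ covariant valuations on $\MP^n$ (Lemma \ref{lem:val1} for $\eutr{}{\zeta}$; the elementary $\max/\min$ identity for $P\mapsto\zeta(-h_{-P})$). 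This is legitimate and non-circular, since Lemma \ref{lemuq2} is established (by reference to the dissection arguments of \cite{Li2018AFV}) independently of the corollary. What your route buys is brevity and self-containment: it avoids the classification machinery and the translation/$\Q$-independence step entirely, and is closer in spirit to Shephard's original direct argument; what the paper's route buys is an illustration of Theorem \ref{thm:co0} in action, and it keeps the heavy lifting inside results proved in full in the paper, whereas you lean on Lemma \ref{lemuq2}, whose proof is only cited. One small loose end: the statement includes $x=o$, while Lemma \ref{lemuq2} is most naturally read for $\FFo{n}$-valued valuations; either note that the lemma applies verbatim pointwise in $x\in\R^n$ for $\FF{n}$-valued valuations, or recover $x=o$ as the paper does, from the case $\zeta\equiv 1$, $x\neq o$, which gives $\sum_{F\in\face{}{}(P)}(-1)^{\dim F}=1$. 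This is a one-line addendum, not a gap.
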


Shephard \cite{MR232280} established this Euler-type relation for linear functions $\zeta$.
His proof also works for arbitrary $\zeta$.
For linear $\zeta$, McMullen \cite{McM77} established Euler-type relations for an arbitrary function-valued valuation $Z$ with the additional assumption that $Z(P+y)(x)=ZP(x)+v(P,x) \cdot y$ for all $y\in \R^n$ where $v(P,x)$ is any vector.

Notice that \eqref{eq:eusupp} for $\zeta \equiv 1$ is the Euler-Schl\"{a}fli-Poincar\'{e} formula
\begin{align*}
\sum_{F \in \face{}{} (P)} (-1)^{\dim F}V_0(F) = \sum_{j=0}^{\dim P} (-1)^{j}|\face{}{j}(P)| =V_0(P),
\end{align*}
where $V_0$ is the Euler characteristic, which is $1$ for any convex body and $0$ for the empty set.

\begin{proof}[Proof of Corollary \ref{co:eusupp}]
The definition of $\eutr{}{\zeta}=\sum_{F \in \face{}{}(P)}(-1)^{\dim F}\zeta \left(h_{F}\right)$ does not rely on the dimension of the ambient space, so we can set the ambient space as $\R^n$ for $n \ge 2$.

For $x =o$, \eqref{eq:eusupp} is either the trivial case $0=0$ or the Euler-Schl\"{a}fli-Poincar\'{e} formula.
The last can be also obtained by setting $\zeta \equiv 1$ and $x \in \ro{n}$ in \eqref{eq:eusupp}.
Hence we only need to prove \eqref{eq:eusupp} for $x \in \ro{n}$.

Lemma \ref{lem:val1} shows that $\eutr{}{\zeta}: \mpo{n} \to \MF(\R^n)$ is a $\GL{}{n}$ covariant valuation of weight $0$.
Hence $\eutr{}{\zeta}$ is described in Theorem \ref{thm:co0}, that is,
\begin{align}\label{eq8091}
  \eutr{}{\zeta} P(x)=\zeta_1(h_P(x)) +\zeta_1^R(h_{-P}(x)) + \eutr{-}{\zeta_2}(P)(x) + \eutr{-}{\zeta_2^R}(-P)(x).
\end{align}
with some $\zeta_1,\zeta_2 \in \FF{}$ for all $P \in \mpo{n}$.
By the definition of $\eutr{}{\zeta}$, we get that
\begin{align*}
\eutr{}{\zeta} T^2(re_1)=\eutr{}{\zeta} T^1 (re_1)=
\begin{cases} \zeta(0), &r > 0,
\\ \zeta(r), &r<0, \end{cases}
\end{align*}
and
\begin{align*}
\eutr{}{\zeta} T^1(e_n)=\eutr{}{\zeta} \{o\} (e_n) =\zeta(0).
\end{align*}
Since $\zeta_1,\zeta_2$ are determined by \eqref{zeta1}, \eqref{zeta2} for $Z=\eutr{}{\zeta}$, we have
\begin{align*}
\zeta_1(r)=\begin{cases}
\frac{1}{2}\zeta(0), & r \ge 0,\\
\zeta(r) - \frac{1}{2}\zeta(0), & r <0,
\end{cases}
\end{align*}
and $\zeta_2 \equiv 0$.
For $o \in P$, we have $h_P(x),h_{-P}(x) \ge 0$.
Hence \eqref{eq8091} implies that \eqref{eq:eusupp} holds for $o \in P$.

Let $x_1,\dots,x_m$ be the vertices of $P \in \mpo{n}$ and let $x \in \ro{n}$.
By the definition of support functions,
we have $h_F(x)=x \cdot x_{j(F)}$ for all $F \in \face{}{}(P)$ and $-h_{-P}(x)=x \cdot x_{\tilde j(P)}$ with some $1 \leq j(F),\tilde j(P) \leq m$.
Hence we can reformulate \eqref{eq:eusupp} for these $P$ and $x$ as
\begin{align}\label{eq:eus1}
0=\zeta(-h_{-P}(x))-\sum_{F \in \face{}{}(P)}(-1)^{\dim F}\zeta \left(h_{F}(x)\right) =\sum_{i=1}^m a_i \zeta(x \cdot x_i),
\end{align}
where
\begin{align}\label{eq:loceu1}
  a_i=\kik{i}(\tilde j(P)) - \sum_{F \in \face{}{}(P)}(-1)^{\dim F} \kik{i}(j(F)).
\end{align}
Now we claim that $a_i=0$ for every $1\leq i \leq m$.
Indeed, assume first that $x \cdot x_i$ are all different.
Since $\R$ is an infinite dimensional space over the rational field $\Q$, we can find $r_1,\dots,r_m \in \ro{}$ such that they are $\Q$-independent.
Also, $a_i$ are integers and do not rely on the choice of $\zeta$, so we choose $\zeta \in \FF{}$ such that $\zeta(x \cdot x_i)=r_i$.
Thus \eqref{eq:eus1} gives that all $a_i=0$.
If some of $x \cdot x_i$ are the same, we just need that the (different) elements of the set $\{\zeta(x \cdot x_i)\}$ to be nonzero $\Q$-independent numbers.

The last part is to show that \eqref{eq:eusupp} holds for $P+y$ for all $y \in \R$.
The vertices of $P+y$ are $x_1+y,\dots,x_m+y$.
Notice that $h_{F+y}(x)=x \cdot (x_{j(F)} + y)$ and $-h_{-(P+y)}(x)=x \cdot (x_{\tilde j(P)} + y)$.
We have
\begin{align*}
  \zeta(-h_{-(P+y)}(x))-\sum_{F \in \face{}{}(P+y)}(-1)^{\dim F}\zeta \left(h_{F}(x)\right) = \sum_{i=1}^m a_i \zeta(x \cdot (x_i+y)).
\end{align*}
Hence \eqref{eq:eusupp} holds for all $P+y$.
Since $P$ and $x$ are chosen arbitrary in the above argument, the proof is completed.
\end{proof}

Corollary \ref{co:eusupp} immediately implies the following.
\begin{mcor}\label{co:eumixv}
For $n \ge 1$, we have
\begin{align*}
\sum_{F \in \face{}{}(P)}(-1)^{\dim F}\int_{\Sn} \zeta \left(\frac{h_{F}(u)}{h_L(u)}\right)dV_L(u)=\int_{\Sn} \zeta \left(-\frac{h_{-P}(u)}{h_L(u)}\right)dV_L(u)
\end{align*}
for every $\zeta \in \FF{}$, $P \in \MP^n$ and $L \in \MK^n$ such that $o$ is contained in the interior of $L$.
\end{mcor}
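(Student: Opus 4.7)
The plan is simply to apply Corollary \ref{co:eusupp} pointwise in the direction variable and then integrate. Since $o$ lies in the interior of $L$, its support function satisfies $h_L(u)>0$ for every $u\in \Sn$, so that $x_u := u/h_L(u)$ is a well-defined vector in $\ro{n}$.

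Fix $u \in \Sn$ and apply Corollary \ref{co:eusupp} with $x=x_u$. Since support functions are positively homogeneous of degree one, $h_F(x_u)=h_F(u)/h_L(u)$ for every $F\in \face{}{}(P)$, and likewise $h_{-P}(x_u)=h_{-P}(u)/h_L(u)$. This yields the pointwise identity
\begin{align*}
\sum_{F \in \face{}{}(P)}(-1)^{\dim F}\zeta \left(\frac{h_{F}(u)}{h_L(u)}\right)
=\zeta\!\left(-\frac{h_{-P}(u)}{h_L(u)}\right),
\qquad u\in \Sn.
\end{align*}

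Next, I would integrate both sides against $dV_L(u)$ on $\Sn$. Because $\face{}{}(P)$ is finite, the finite sum commutes with the integral, and the resulting identity is precisely the claim of Corollary \ref{co:eumixv}. There is no genuine obstacle in this argument: the only two facts used are the pointwise Euler-type relation of Corollary \ref{co:eusupp} and the positive $1$-homogeneity of support functions, which together with $h_L>o$ on $\Sn$ make the substitution $x=u/h_L(u)$ legitimate for every $u\in \Sn$.
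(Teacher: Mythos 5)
Your argument is correct and is essentially the paper's: the paper states that Corollary \ref{co:eumixv} follows immediately from Corollary \ref{co:eusupp}, and the intended reasoning is exactly your substitution $x=u/h_L(u)$ (legitimate since $h_L(u)>0$ for $u\in\Sn$), the $1$-homogeneity of support functions, and integration of the resulting pointwise identity against $dV_L$ with the finite sum interchanged with the integral.
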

Here $V_L$ is the cone volume measure of $L$, that is, $V_L(\omega) = \frac{1}{n}\int_{\nu_K^{-1}(\omega)} h_K(\nu_K(x))d\mathcal{H}^{n-1}(x)$ for any Borel set $\omega \subset \Sn$. Here $\nu_K$ is the Gauss map of the boundary of $K$ and $\mathcal{H}^{n-1}$ is the $(n-1)$-dimensional Hausdorff measure.

We also obtain local versions of the Euler-Schl\"{a}fli-Poincar\'{e} formula related to $\eutr{\pm}{\zeta}(P)$.
Let $1$ denote the constant function $1$ on $\R$.
By the definition, $\eutr{\pm}{1}(P)=\sum_{j=0}^{\dim P} (-1)^{j} |\face{\pm}{j}(P)|$.
If further $o \in P$, then $\eutr{-}{1}(P)=\sum_{F \in \face{}{} (P)} (-1)^{\dim F}V_0(o \cap F)$.
We obtain the following result.

\begin{mcor}[The local Euler-Schl\"{a}fli-Poincar\'{e} formula]\label{co:leu}
For $n \ge 0$, we have
\begin{align*}
&\sum_{j=0}^{\dim P} (-1)^{j}|\face{-}{j}(P)|=(-1)^{\dim P}V_0(o \cap \relint P), \\
&\sum_{j=0}^{\dim P} (-1)^{j}|\face{+}{j}(P)|=V_0(o \cap P),
\end{align*}
and
\begin{align*}
\sum_{F \in \face{}{} (P)} (-1)^{\dim F}V_0(x \cap F)=(-1)^{\dim P}V_0(x \cap \relint P)
\end{align*}
for every $x \in \R^n$ and $P \in \MP^n$.
\end{mcor}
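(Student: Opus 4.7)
My plan is to establish the three identities in sequence, using the Euler--Schl\"afli--Poincar\'e identity $\sum_{F\in\face{}{}(P)}(-1)^{\dim F}=1$ noted in the discussion of Corollary~\ref{co:eusupp}, together with the valuation uniqueness principle (Lemma~\ref{lemuq2}).

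For the third (general local) identity, I would fix $P\in\MP^n$ and $x\in\R^n$ and argue by cases on the position of $x$. If $x\notin P$ both sides vanish, and if $x\in\relint P$ only $F=P$ contains $x$, giving $(-1)^{\dim P}$ on both sides. The substantive case is $x\in\partial P$: let $F_0$ be the unique minimal face of $P$ with $x\in\relint F_0$, so $x\in F$ iff $F_0\subseteq F\subseteq P$. Since $F_0$ is a proper face, the interval $[F_0,P]$ in the face lattice of $P$ is order-isomorphic, via the dimension shift $\dim F=\dim G+\dim F_0+1$, to the augmented face lattice of a quotient polytope $Q$ of dimension $\dim P-\dim F_0-1$. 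The Euler--Schl\"afli--Poincar\'e identity for $Q$, together with the $-1$ contribution from the empty face of $Q$, yields $\sum_{F_0\subseteq F}(-1)^{\dim F}=(-1)^{\dim F_0+1}(1-1)=0$, matching the RHS.

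For the first and second identities when $o\in P$, the paragraph preceding the corollary identifies $\face{+}{}(P)=\face{}{}(P)$ and $\face{-}{}(P)=\{F\in\face{}{}(P):o\in F\}$. The second identity then reduces to the Euler--Schl\"afli--Poincar\'e formula, and the first identity is the third identity specialized to $x=o$.

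The main obstacle is the case $o\notin P$ for the first two identities, where both right-hand sides vanish and I must show $\eutr{-}{1}(P)=\eutr{+}{1}(P)=0$. I would exploit the valuation structure: both sides define $\GL{}{n}$-covariant weight-$0$ valuations on $\MP^n$, the LHS by Lemma~\ref{lem:val1} and the RHS by a direct case analysis on the position of $o$ relative to $K,L,K\cup L,K\cap L$ (or via the additivity of the compactly supported Euler characteristic). Their differences vanish on $\mpo{n}$ by the previous case, so Lemma~\ref{lemuq2} reduces the remaining verification to the ``outside'' standard simplices $s[e_1,\dots,e_d]$. For each face of $s[e_1,\dots,e_d]$ indexed by a nonempty $S\subseteq\{1,\dots,d\}$, a direct computation shows the outer normal cone $\{u: u_i=u_j\text{ for }i,j\in S,\ u_i\geq u_k\text{ for }i\in S,\ k\notin S\}$ contains both vectors with $u_i>0$ and vectors with $u_i<0$ for $i\in S$ (taking $u_k$ sufficiently negative for $k\notin S$), so the support function of $s[e_1,\dots,e_d]$ takes both strictly positive and strictly negative values on this cone. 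Hence $\face{\pm}{}(s[e_1,\dots,e_d])=\varnothing$, and both alternating sums vanish, matching the right-hand sides.
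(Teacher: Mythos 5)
Your argument is correct, but it takes a genuinely different route from the paper. The paper treats $P\mapsto\eutr{\pm}{1}(P)$ as real-valued $\GL{}{n}$ invariant valuations of weight $0$ (via Lemma~\ref{lem:val1}), invokes the Ludwig--Reitzner classification (Lemma~\ref{co:eu}), pins down the three constants with the test bodies $\{o\}$, $[-e_1,e_1]$, $[e_1,2e_1]$ to get the first two identities, and then obtains the third by rewriting the first as $\sum_F(-1)^{\dim F}V_0(o\cap F)=(-1)^{\dim P}V_0(o\cap\relint P)$ and replacing $P$ by $P-x$. You instead prove the third identity first by classical face-lattice combinatorics (unique face $F_0$ with $x\in\relint F_0$, the interval $[F_0,P]$ isomorphic to the augmented face lattice of a quotient polytope, and the classical Euler relation), deduce the first identity for $o\in P$ from it and the second from the Euler--Schl\"afli--Poincar\'e formula, and dispose of the case $o\notin P$ by the valuation structure: Lemma~\ref{lem:val1} plus Lemma~\ref{lemuq2} reduce everything to $sT^d$ and $s[e_1,\dots,e_d]$, and your normal-cone computation showing $\face{\pm}{}(s[e_1,\dots,e_d])=\varnothing$ is correct (the support function changes sign on every normal cone of an off-origin simplex). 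What the paper's route buys is brevity and consistency with its theme of deriving Euler-type relations from classification theorems; what yours buys is independence from Lemma~\ref{co:eu}, at the price of importing the standard (but external to this paper) vertex-figure/quotient-polytope lattice isomorphism, and of course your direct proof of the third identity essentially re-derives the Kabluchko--Last--Zaporozhets relation by classical means rather than via the classification. To make the write-up complete you should (i) justify, or cite Lemma~\ref{co:eu} for, the fact that $P\mapsto(-1)^{\dim P}V_0(o\cap\relint P)$ and $P\mapsto V_0(o\cap P)$ are ($\sln$ invariant, hence constant-function-valued $\sln$ covariant) valuations before applying Lemma~\ref{lemuq2}, and (ii) add the paper's remark that the formulae are independent of the ambient dimension, so that the low-dimensional cases $n\le 1$ of the stated ``$n\ge 0$'' claim follow by embedding into $\R^n$ with $n$ large enough for the uniqueness lemma.
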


The last formula was also established by Kabluchko, Last, and Zaporozhets \cite{MR3679943}.
They even established a general formula, replacing $x$ by an arbitrary affine subspace (the right side needs to be changed a little bit in the general case).
Using such general formula, they confirmed the so-called Cowan formula (identity).

To prove Corollary \ref{co:leu}, we need introduce the following lemma.

\begin{lem}[Ludwig and Reitzner \cite{LR2017sl}]\label{co:eu}
Let $n \ge 2$. A map $Z: \MP^n \to \R$ is a $\GL{}{n}$ invariant valuation of weight $0$ if and only if
there are constants $c_1,c_2,c_3 \in \R$
such that
\begin{align*}
ZP=c_1V_0(P)+c_2(-1)^{\dim P}V_0(o \cap \relint P)+c_3 V_0(o \cap P)
\end{align*}
for every $P \in \MP^n$.
\end{lem}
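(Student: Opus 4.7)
My plan is to verify the two directions separately, with the bulk of the work on the ``only if'' direction consisting of a reduction to values on standard simplices and a collapse of those values via the valuation property.

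\emph{If direction.} Each of the three summands is a $\GL{}{n}$-invariant valuation of weight $0$: the Euler characteristic $V_0$ is classical; the indicator $V_0(o \cap P)=\kik{\{o \in P\}}$ is a valuation since $o \in K \cup L$ iff $o \in K$ or $o \in L$; and the alternating term $(-1)^{\dim P} V_0(o \cap \relint P)$ can be checked by direct inclusion--exclusion over faces containing $o$ in their relative interior, or alternatively by specializing Corollary \ref{co:leu}. Each expression depends only on the incidence of $o$ with $P$ and its dimension, so $\GL{}{n}$-invariance is automatic.

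\emph{Only if direction; reduction to standard simplices.} I would first establish a real-valued analogue of Lemma \ref{lemuq2}: a $\GL{}{n}$-invariant $\R$-valued valuation of weight $0$ is determined by the two families of values $a_d:=Z(T^d)$ and $b_d:=Z([e_1,\dots,e_d])$ for $0 \le d \le n$. The argument uses that $\GL{}{n}$ acts transitively on $d$-simplices with a vertex at $o$ and on $d$-simplices whose $d+1$ vertices are linearly independent, while other simplices (e.g.\ segments containing $o$ in their interior) can be dissected by hyperplanes through $o$ into pieces that fall into these two standard orbits; the valuation property then extends $Z$ uniquely.

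\emph{Collapse and linear system.} Next I would show $a_d = a_1$ and $b_d = b_1$ for $d \ge 2$. Dissect $T^d$ by the hyperplane $H_{1/2}$ from \S\ref{sec:fe}: both halves $\phi_{1/2}T^d$ and $\psi_{1/2}T^d$ are $\GL{}{n}$-equivalent to $T^d$ itself, while the intersection $\phi_{1/2}\widehat{T}^{d-1}$ is $\GL{}{n}$-equivalent to $T^{d-1}$. The valuation identity becomes $a_d+a_{d-1}=2a_d$, hence $a_{d-1}=a_d$. An analogous dissection of $[e_1,\dots,e_{d+1}]$ by $\{x_1=x_2\}$ splits it into two $\GL{}{n}$-equivalent copies of itself with intersection $\GL{}{n}$-equivalent to $[e_1,\dots,e_d]$, giving $b_{d+1}=b_d$. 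So $Z$ is pinned down by the three constants $a_0,a_1,b_1$. Tabulating the three model valuations on $\{o\}$, $T^d$ ($d\ge 1$), and $[e_1,\dots,e_d]$ ($d\ge 1$) produces the linear system
\[
\begin{pmatrix} 1 & 1 & 1 \\ 1 & 0 & 1 \\ 1 & 0 & 0 \end{pmatrix}\begin{pmatrix} c_1 \\ c_2 \\ c_3 \end{pmatrix} = \begin{pmatrix} a_0 \\ a_1 \\ b_1 \end{pmatrix},
\]
whose determinant is $1$; solving gives $c_1=b_1$, $c_2=a_0-a_1$, $c_3=a_1-b_1$, and the extension result from the previous step transfers the identity to all of $\MP^n$.

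The main obstacle is the real-valued analogue of Lemma \ref{lemuq2}: one must carefully enumerate the $\GL{}{n}$-orbits of simplices not of the form $T^d$ or $[e_1,\dots,e_d]$ (in $\R^2$ already the segments $[-e_1,e_1]$ and $[e_1,2e_1]$ lie outside both standard families) and dissect each such simplex, via hyperplanes through $o$ when needed, into pieces in the standard orbits. This orbit bookkeeping and the verification that the resulting dissections are compatible with the valuation property is the technical heart of the proof; once it is in place, the collapse argument and the $3\times 3$ solution above complete the classification.
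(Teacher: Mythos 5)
Your ``if'' direction, the collapse of the simplex values via the $H_{1/2}$-dissections, and the $3\times 3$ linear system are all fine (and your constants $c_1=b_1$, $c_2=a_0-a_1$, $c_3=a_1-b_1$ are consistent, e.g.\ with $Z[-e_1,e_1]=2a_1-a_0$). The genuine gap is in the step you yourself call the technical heart: the real-valued analogue of Lemma \ref{lemuq2}. Your proposed mechanism --- that every simplex outside the two standard orbits ``can be dissected, via hyperplanes through $o$ when needed, into pieces in the standard orbits'' --- fails, and your own example $[e_1,2e_1]$ already defeats it: every linear hyperplane through $o$ that meets this segment contains it, and, worse, no dissection of it into standard-orbit pieces exists at all. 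Indeed, for any simplex $S$ with $o\in\operatorname{aff}(S)\setminus S$ (also $[e_1,2e_1,e_2]$, etc.) the affine hull is a linear subspace of dimension $\dim S$, so every $(\dim S)$-dimensional sub-simplex of $S$ has linearly dependent vertices and lies in neither orbit, nor does it contain $o$. Such polytopes can only be reached by using the valuation property ``upwards'', e.g.\ $[o,2e_1]=[o,e_1]\cup[e_1,2e_1]$ with intersection $\{e_1\}$ forces $Z[e_1,2e_1]=b_1$; in general one compares $P$ with $[P,o]$ and handles the lower-dimensional overlaps by induction. Since this determination argument is precisely the substance of the classification (your remaining two steps are routine once it is in place), leaving it as ``orbit bookkeeping by dissection'' is a missing proof of the main point, and the sketch you do give would not go through as stated.

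Two further remarks. Within this paper the cleanest way to discharge that step is not to re-prove a real-valued analogue but to apply Lemma \ref{lemuq2} itself: a $\GL{}{n}$ invariant $Z$ induces the constant-function-valued valuation $\tilde{Z}P(x):=ZP$, which is trivially $\sln$ covariant, and invariance absorbs the scaling parameter $s$, so agreement on $T^d$ and $[e_1,\dots,e_d]$ suffices; note also that the paper does not prove the present lemma at all but quotes it from Ludwig and Reitzner \cite{LR2017sl}, so there is no internal proof to compare against. Finally, your alternative justification of the ``if'' direction by ``specializing Corollary \ref{co:leu}'' would be circular here, since Corollary \ref{co:leu} is deduced in \S\ref{sec:euler} from this very lemma; keep the direct verification (via Lemma \ref{lem:wval}) instead.
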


\begin{proof}[Proof of Corollary \ref{co:leu}]
Since desired formulae do not rely on the choice of the dimension of the ambient space, we can set the ambient space as $\R^n$ for $n \ge 2$.
By Lemma \ref{lem:val1}, the left sides of the first two desired formulae are $\GL{}{n}$ invariant valuations of weight $0$.
Together with Lemma \ref{co:eu},
we have
\begin{align*}
\sum_{j=0}^{\dim P} (-1)^{j}|\face{\pm}{j}(P)|=c_1^{\pm} V_0(P)+c_2^{\pm}(-1)^{\dim P}V_0(o \cap \relint P) +c_3^{\pm} V_0(o \cap P).
\end{align*}
with $c_1^\pm,c_2^\pm,c_3^\pm$.
Choosing $P$ as $\{o\}$, $[-e_1,e_1]$ or $[e_1,2e_1]$,
we get
\begin{align*}
\begin{cases}
  c_1^{-}+c_2^{-}+c_3^{-}=1, \\
  c_1^{-}-c_2^{-}+c_3^{-}=-1, \\
  c_1^{-}=0,
\end{cases}
~~~~\begin{cases}
c_1^{+}+c_2^{+}+c_3^{+}=1,\\
c_1^{+}-c_2^{+}+c_3^{+}=1,\\
c_1^+=0.
\end{cases}
\end{align*}
Thus,
\begin{align*}
&c_1^- = 0 ,~c_2^-=1,~c_3^-=0, \\
&c_1^+ = 0 ,~c_2^+=0,~c_3^+=1,
\end{align*}
Hence the first two desired formulae are true.
For $o \in P$, the first formula is equivalent to the following
\begin{align}\label{eq:ESP}
\sum_{F \in \face{}{} (P)} (-1)^{\dim F}V_0(o \cap F)= (-1)^{\dim P}V_0(o \cap \relint P).
\end{align}
Clearly, \eqref{eq:ESP} also holds for $o \notin P$.
Now replacing $P$ by $P-x$ in \eqref{eq:ESP}, we get the third formula.
\end{proof}

\section{Valuations on $\MP^n$ and $\MK^n$}\label{sec:genpoly}
To extend Theorem \ref{mthm:sln} to $\MP^n$, the natural idea is to check whether the representation
\begin{align*}
&\zeta_1(h_P(x)) +\zeta_1^R(h_{-P}(x)) + \eutr{-}{\zeta_2}(P)(x) + \eutr{-}{\zeta_2^R}(-P)(x) +\frac{1}{|x|} \int_{\R} V_{n-1}(P \cap H_{x,t}) d\mu(t) \\
&\qquad +\tilde{\zeta}_1(h_{[P,o]}(x)) +\tilde{\zeta}_1^R(h_{-[P,o]}(x)) + \eutr{-}{\tilde{\zeta}_2}([P,o])(x) + \eutr{-}{\tilde{\zeta}_2^R}(-[P,o])(x) \\
&\qquad +\frac{1}{|x|} \int_{\R} V_{n-1}([P,o] \cap H_{x,t}) d\tilde{\mu}(t)
\end{align*}
gives all $\sln$ covariant $\CFr{n}$-valued valuations, where $\zeta_1,\zeta_2,\tilde{\zeta}_1,\tilde{\zeta}_2 \in \CF{}$ and $\mu,\tilde{\mu} \in \cms$.
Unfortunately, it does not.
Instead, replacing $\zeta^R(h_{-P})$ by $\eutr{+}{\zeta}(P)$ (by Corollary \ref{co:eusupp}) in Theorem \ref{mthm:sln} gives the idea of the following result.

\begin{mthm}\label{mthm:sln2}
Let $n \ge 3$. A map $Z: \MP^n \to \CFr{n}$ is a regular and $\sln$ covariant valuation
if and only if
there are $\zeta_1,\zeta_2,\tilde{\zeta}_1,\tilde{\zeta}_2 \in \CF{}$ and $\mu,\tilde{\mu} \in \cms$ such that
\begin{align}\label{val:sl3}
&ZP(x) \notag \\
&=\eutr{+}{\zeta_1}(P)(x) +\eutr{+}{\zeta_1^R}(-P)(x) + \eutr{-}{\zeta_2}(P)(x) + \eutr{-}{\zeta_2^R}(-P)(x) + \frac{1}{|x|} \int_{\R} V_{n-1}(P \cap H_{x,t}) d\mu(t)
  \notag \\
&\qquad +\eutr{+}{\tilde{\zeta}_1}([P,o])(x) + \eutr{+}{\tilde{\zeta}_1^R}(-[P,o])(x) + \eutr{-}{\tilde{\zeta}_2}([P,o])(x) + \eutr{-}{\tilde{\zeta}_2^R}(-[P,o])(x) \notag \\ &\qquad +\frac{1}{|x|} \int_{\R} V_{n-1}([P,o] \cap H_{x,t}) d\tilde{\mu}(t)
\end{align}
for every $P \in \MP^n$ and $x \in \ro{n}$.
\end{mthm}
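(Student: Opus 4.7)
The plan is to combine the classification on $\mpo{n}$ (Theorem~\ref{mthm:sln}), the Euler-type identity of Corollary~\ref{co:eusupp}, and the uniqueness criterion of Lemma~\ref{lemuq2}.

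For the ``if'' direction, I will verify that every summand of \eqref{val:sl3} defines a regular, $\sln$ covariant, $\CFr{n}$-valued valuation on $\MP^n$. The non-tilded summands are handled by Lemmas~\ref{lem:val1}, \ref{lem:momf}, \ref{lem:suppf}, and \ref{lem:ref}. For the tilded summands, which have the form $V([P,o])$ for a building-block $V$, the key geometric fact is that $[P\cup Q,o]=[P,o]\cup[Q,o]$ whenever $P\cup Q$ is convex. Although $[P,o]\cap[Q,o]$ may strictly contain $[P\cap Q,o]$, I will verify that the particular Euler-type combination of tilded terms annihilates on the ``defect'' $[P,o]\cap[Q,o]\setminus[P\cap Q,o]$, so that the composite map $P\mapsto V([P,o])$ is a valuation. $\sln$ covariance follows from $\phi[P,o]=[\phi P,o]$ and regularity from Lemma~\ref{lem:contin}.

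For the ``only if'' direction, first I will restrict $Z$ to $\mpo{n}$ and invoke Theorem~\ref{mthm:sln} to obtain $\zeta_1^\ast,\zeta_2^\ast\in\CF{}$ and $\mu^\ast\in\cms$ describing $Z$ on $\mpo{n}$. Using Corollary~\ref{co:eusupp} (which gives $\eutr{+}{\zeta}(P)=\zeta^R(h_{-P})$ when $o\in P$, because $\face{+}{}(P)=\face{}{}(P)$ there) I rewrite $ZP$ on $\mpo{n}$ in the ``$\eutr{+}$'' form appearing in \eqref{val:sl3}. Any split $\zeta_i^\ast=\zeta_i+\tilde\zeta_i$ and $\mu^\ast=\mu+\tilde\mu$ makes the proposed RHS agree with $Z$ on $\mpo{n}$; the remaining freedom in this split will be pinned down by the requirement that the RHS match $Z$ on polytopes not containing the origin.

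By Lemma~\ref{lemuq2}, the difference between $Z$ and the candidate RHS is determined by its values on $sT^d$ (where it already vanishes) and on $s[e_1,\dots,e_d]$, for $s>0$ and $0\le d\le n$. Because $[s[e_1,\dots,e_d],o]=sT^d$, the tilded part of \eqref{val:sl3} evaluated at $P=s[e_1,\dots,e_d]$ becomes a Theorem~\ref{mthm:sln}-type expression at $sT^d$, while the non-tilded part is a concrete expression at $s[e_1,\dots,e_d]$. I will then run the functional-equation/dissection machinery of \S\ref{sec:fe}--\S\ref{sec:mainpr}, now dissecting $s[e_1,\dots,e_d]$ (instead of $sT^d$) by the hyperplane $H_\lambda$, to derive an analogue of \eqref{30}; Lemma~\ref{lem:fe1} together with the codimension-one analysis of Theorem~\ref{thm:low}, applied to this difference valuation restricted to such simplices, then determines $\tilde\zeta_1,\tilde\zeta_2\in\CF{}$ and $\tilde\mu\in\cms$ uniquely, and Lemma~\ref{lemuq2} closes the argument. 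The main obstacle I anticipate is in the ``if'' direction: verifying valuation additivity for $P\mapsto V([P,o])$ given that $[P,o]\cap[Q,o]\ne[P\cap Q,o]$ in general. This will require a careful cancellation argument that exploits the Euler-type structure of $\eutr{\pm}{\zeta}$ together with the sectional structure of $M_{\tilde\mu}$, probably via a simplicial dissection of the defect region into cones through $o$ on which the combined Euler-type sum telescopes to zero.
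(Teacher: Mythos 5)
Your overall architecture --- describe $Z$ on $\mpo{n}$ via Theorem \ref{mthm:sln} and Corollary \ref{co:eusupp}, observe that on $\mpo{n}$ only the sums $\zeta_i+\tilde\zeta_i$ and $\mu+\tilde\mu$ are visible, pin down the tilded data from the values on $s[e_1,\dots,e_d]$, and close with Lemma \ref{lemuq2} --- is exactly the paper's. The gap is in how you propose to determine $\tilde\zeta_1,\tilde\zeta_2,\tilde\mu$. Dissecting $s[e_1,\dots,e_d]$ by $H_\lambda$ is fine ($\phi_\lambda,\psi_\lambda$ do cut the facet and an analogue of \eqref{30} holds), but you then invoke ``Lemma \ref{lem:fe1} together with the codimension-one analysis of Theorem \ref{thm:low}, applied to this difference valuation restricted to such simplices''. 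Theorem \ref{thm:low} and the lemmas behind it (Lemma \ref{lem1}, Lemma \ref{lem:codim}) are statements about lower-dimensional polytopes in $\mpo{n}$, i.e.\ containing the origin; they say nothing about $Z$ on $s[e_1,\dots,e_d]$ for $d\le n-1$, whose affine hull avoids $o$ (the relevant stabilizers in $\sln$ are different, so Lemma \ref{lem1} does not transfer), and without controlling these lower-dimensional origin-free simplices you cannot start the induction that feeds Lemma \ref{lem:fe1}. The idea missing from your plan is the paper's opposite-facet device: define $\tilde Z T:=Z\tilde T$ on $\MTon$, where $\tilde T$ is the facet of $T$ opposite the origin, check that $\tilde Z$ is again a regular, $\sln$ covariant valuation on $\MTon$, and then the representation already established in the proof of Theorem \ref{mthm:sln} applies verbatim to $\tilde Z$, giving $Z(s[e_1,\dots,e_d])=\tilde Z(sT^d)$ precisely in the tilded form of \eqref{val:sl3}. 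You would need either this device or a from-scratch origin-free analogue of Theorem \ref{thm:low}. A further point your sketch leaves implicit but actually needs: at $P=s[e_1,\dots,e_d]$ the untilded part of \eqref{val:sl3} vanishes identically, since $\pm(e_1+\cdots+e_d)\in N(P,F)$ for every face $F$, hence $\face{+}{}(P)=\face{-}{}(P)=\emptyset$, and the $\mu$-term kills lower-dimensional bodies because $\mu$ is non-atomic; this is what makes the matching on this family determine exactly the tilded data.

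Concerning the ``if'' direction, the obstacle you anticipate does not exist: whenever $P,Q,P\cup Q\in\MP^n$, one has $[P\cup Q,o]=[P,o]\cup[Q,o]$ \emph{and} $[P\cap Q,o]=[P,o]\cap[Q,o]$. Indeed, if $y\neq o$ lies in $[P,o]\cap[Q,o]$, the ray from $o$ through $y$ contains points $p\in P$ and $q\in Q$ at or beyond $y$; the segment $[p,q]$ lies in the convex set $P\cup Q$, and connectedness of $[p,q]$ forces a point of $P\cap Q$ on it, which lies at or beyond $y$, so $y\in[P\cap Q,o]$. Hence $P\mapsto V([P,o])$ is a valuation for every valuation $V$ (this is in substance Lemma \ref{lem:ref}), and no cancellation over a ``defect region'' is needed; building the additivity of the tilded terms on such a cancellation would start from a false picture. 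With that correction the ``if'' direction is routine from Lemmas \ref{lem:val1}, \ref{lem:momf} and \ref{lem:suppf}, together with a direct check of regularity for the explicit formulas.
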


\begin{proof}
Let $\MTon$ be the set of simplices in $\R^n$ with one vertex at the origin.
In the proof of Theorem \ref{mthm:sln}, we have shown that a regular and $\sln$ covariant valuation $Z: \MTon \to \CFr{n}$ can be described as
\begin{align*}
  Z(sT^d)(x)&=\eta_1(h_P(x)) +\eta_1^R(h_{-P}(x)) + \eutr{-}{\eta_2}(sT^d)(x) + \eutr{-}{\eta_2^R}(-sT^d)(x) \notag \\ &\qquad +\frac{1}{|x|} \int_{\R} V_{n-1}(sT^d \cap H_{x,t}) d\nu(t)
\end{align*}
for any $s>0$, $0 \leq d \leq n$ and $x \in \ro{n}$ with some $\eta_1,\eta_2 \in \CF{}$ and $\nu \in \cms$.
Further by Corollary \ref{co:eusupp} and $\eutr{+}{\eta}P(x)=\sum_{F \in \face{}{}(P)}(-1)^{\dim F}\eta \left(h_{F}(x)\right)$ when $o \in P$,
we get
\begin{align}\label{val:sl}
Z(sT^d)(x)&= \eutr{+}{\eta_1}(sT^d)(x) +\eutr{+}{\eta_1^R}(-sT^d)(x) + \eutr{-}{\eta_2}(sT^d)(x) + \eutr{-}{\eta_2^R}(-sT^d)(x) \notag \\ &\qquad +\frac{1}{|x|} \int_{\R} V_{n-1}(sT^d \cap H_{x,t}) d\nu(t).
\end{align}

For any $T \in \MTon \setminus \{o\}$, we write $\tilde{T}$ for its $(\dim T-1)$-dimensional face opposite to the origin.
We define the new map $\tilde{Z}: \MTon \to \CFr{n}$ by $\tilde{Z} T = Z \tilde{T}$ for any $T \in \MTon \setminus \{o\}$ and $\tilde{Z}\{o\}=0$.
It is not hard to show that $\tilde{Z}$ is also a regular and $\SLn$ covariant valuation on $\MTon$.
Hence, there are $\tilde{\eta}_1,\tilde{\eta}_2 \in \CF{}$ and $\tilde{\nu} \in \cms$ such that
\begin{align}\label{val:sl2}
Z(s[e_1,\dots,e_d])(x) &=\tilde{Z}(sT^d) \notag\\
&= \eutr{+}{\tilde{\eta}_1}(sT^d)(x) +\eutr{+}{(\tilde{\eta}_1)^R}(-sT^d)(x) + \eutr{-}{\tilde{\eta}_2}(sT^d)(x) + \eutr{-}{(\tilde{\eta}_2)^R}(-sT^d)(x) \notag \\
& \qquad +\frac{1}{|x|} \int_{\R} V_{n-1}(sT^d \cap H_{x,t}) d\tilde{\nu}(t)
\end{align}
for any $s>0$, $1 \leq d \leq n$ and $x \in \ro{n}$.
Set $\tilde{\zeta}_i=\tilde{\eta}_i$ and $\zeta_i=\eta_i-\tilde{\eta}_i$ for $i=1,2$ and $\tilde{\mu}=\tilde{\nu},~\mu=\nu-\tilde{\nu}$.
The valuation defined by the right side of \eqref{val:sl3} satisfies \eqref{val:sl} and \eqref{val:sl2}.
Now the proof is completed by Lemma \ref{lemuq2}.
\end{proof}

The extension of Theorem \ref{thm:co0} to $\MP^n$ is similar to Theorem \ref{mthm:sln2}.
\begin{mthm}\label{thm2:co0}
Let $n \ge 3$. A map $Z: \MP^n \to \FFo{n}$ is a $\GL{+}{n}$ covariant valuation
if and only if
there are $\zeta_1,\zeta_2,\tilde{\zeta}_1,\tilde{\zeta}_2 \in \FF{}$ such that
\begin{align*}
ZP(x) &=\eutr{+}{\zeta_1}(P)(x) +\eutr{+}{\zeta_1^R}(-P)(x) + \eutr{-}{\zeta_2}(P)(x) + \eutr{-}{\zeta_2^R}(-P)(x)
  \\
&\qquad +\eutr{+}{\tilde{\zeta}_1}([P,o])(x) + \eutr{+}{\tilde{\zeta}_1^R}(-[P,o])(x) + \eutr{-}{\tilde{\zeta}_2}([P,o])(x) + \eutr{-}{\tilde{\zeta}_2^R}(-[P,o])(x)
\end{align*}
for every $P \in \MP^n$ and $x \in \ro{n}$.

A map $Z: \MP^2 \to \FFo{2}$ is a $\GL{}{2}$ covariant valuation of weight $0$ if and only if the above relation holds for $n=2$.
\end{mthm}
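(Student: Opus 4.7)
The plan is to adapt the proof of Theorem \ref{mthm:sln2} line by line, with Theorem \ref{thm:co0} replacing Theorem \ref{mthm:sln}. For the ``if'' direction, each of the eight summands is a $\GL{+}{n}$ (respectively $\GL{}{2}$) covariant valuation of weight $0$: the four summands in $P$ come directly from Lemma \ref{lem:val1}, and the four summands in $[P,o]$ arise by composing $\eutr{\pm}{\zeta}$ with $P \mapsto [P,o]$, a map that commutes with every $\phi$ fixing the origin and carries the valuation identity on $\MP^n$ to itself.

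For the ``only if'' direction, I first restrict $Z$ to $\mpo{n}$ and apply Theorem \ref{thm:co0} to obtain $\eta_1,\eta_2 \in \FF{}$ with
\begin{equation*}
ZP(x) = \eta_1(h_P(x)) + \eta_1^R(h_{-P}(x)) + \eutr{-}{\eta_2}(P)(x) + \eutr{-}{\eta_2^R}(-P)(x)
\end{equation*}
for all $P \in \mpo{n}$. Since $o \in P$ forces $\face{+}{}(P) = \face{}{}(P)$, Corollary \ref{co:eusupp} identifies $\eutr{+}{\eta_1}(P)(x) = \eta_1^R(h_{-P}(x))$ and $\eutr{+}{\eta_1^R}(-P)(x) = \eta_1(h_P(x))$ on $\mpo{n}$, so the representation rewrites as
\begin{equation*}
ZP(x) = \eutr{+}{\eta_1}(P)(x) + \eutr{+}{\eta_1^R}(-P)(x) + \eutr{-}{\eta_2}(P)(x) + \eutr{-}{\eta_2^R}(-P)(x).
\end{equation*}

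Following the template of the proof of Theorem \ref{mthm:sln2}, I define $\tilde Z : \MTon \to \FFo{n}$ by $\tilde Z T := Z \tilde T$, where $\tilde T$ is the face of $T$ opposite to the origin (and $\tilde Z \{o\} := 0$), and verify that $\tilde Z$ is a $\GL{+}{n}$ (respectively $\GL{}{2}$) covariant valuation of weight $0$ on $\MTon$. Because Lemma \ref{lemuq} shows that the relevant covariant valuations are determined by their values on $sT^d \in \MTon$, and because the proofs of Theorems \ref{thm:low} and \ref{thm:simgl0} feeding Theorem \ref{thm:co0} test their hypothesis only on such simplices, the same chain of reasoning applied to $\tilde Z$ produces $\tilde\eta_1, \tilde\eta_2 \in \FF{}$ with
\begin{equation*}
Z(s[e_1,\dots,e_d])(x) = \tilde Z(sT^d)(x) = \eutr{+}{\tilde\eta_1}(sT^d)(x) + \eutr{+}{\tilde\eta_1^R}(-sT^d)(x) + \eutr{-}{\tilde\eta_2}(sT^d)(x) + \eutr{-}{\tilde\eta_2^R}(-sT^d)(x).
\end{equation*}
Setting $\tilde\zeta_i := \tilde\eta_i$ and $\zeta_i := \eta_i - \tilde\eta_i$ for $i = 1, 2$, the right-hand side of the theorem built from these functions agrees with $Z$ on every $sT^d$ (where $[sT^d, o] = sT^d$ recombines $\zeta_i$ and $\tilde\zeta_i$ into $\eta_i$) and on every $s[e_1,\dots,e_d]$ (where $[s[e_1,\dots,e_d], o] = sT^d$, so the $\tilde\zeta_i$-block delivers $\tilde Z(sT^d) = Z(s[e_1,\dots,e_d])$ while the $\zeta_i$-block vanishes on the origin-free simplex by the face-counting paralleling that in the proof of Theorem \ref{mthm:sln2}). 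Lemma \ref{lemuq2} then forces agreement on all of $\MP^n$. The $n=2$ case is handled identically, invoking the second part of Theorem \ref{thm:co0}.

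The main obstacle is verifying that $\tilde Z$ is genuinely a valuation on $\MTon$: when a simplex $T \in \MTon$ is split by a hyperplane into pieces still in $\MTon$, one must show that the opposite-to-origin faces satisfy an inclusion--exclusion identity mirroring that of $T$ itself. This face-combinatorial check is asserted without detail in the proof of Theorem \ref{mthm:sln2} and is the only nontrivial piece one must supply to run the whole argument.
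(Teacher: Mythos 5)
Your proposal is correct and is essentially the paper's own argument: the paper gives no separate proof of this theorem, stating only that the extension of Theorem \ref{thm:co0} to $\MP^n$ is ``similar to Theorem \ref{mthm:sln2}'', and your adaptation (restrict to $\mpo{n}$, rewrite via Corollary \ref{co:eusupp}, pass to $\tilde Z T = Z\tilde T$ on $\MTon$, recombine $\zeta_i=\eta_i-\tilde\eta_i$, $\tilde\zeta_i=\tilde\eta_i$, and conclude with Lemma \ref{lemuq2}) is precisely that intended argument. The verification you flag (that $\tilde Z$ is a covariant valuation on $\MTon$ and that the $E^{\pm}$-blocks vanish on $s[e_1,\dots,e_d]$) is likewise left implicit in the paper's proof of Theorem \ref{mthm:sln2} and checks out.
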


Some readers may wonder why the valuation $\zeta(h_P)$ seems to be missing in Theorem \ref{mthm:sln2}.
In fact, it is included in the function classified in Theorem \ref{mthm:sln2} as
\begin{align}\label{eq:eu4}
\eutr{-}{\zeta}P-\eutr{-}{\zeta}[P,o]=\zeta(-h_{-P})-\zeta(-h_{[-P,o]})
\end{align}
This formula will be proved in the Theorem \ref{mthm:sln4}.

Analogs to Lemma \ref{lem:contin2}, the following lemma will be used in the proof of Theorem \ref{mthm:sln4}.

\begin{lem}\label{lem:contin3}
Let $\zeta_1,\tilde \zeta_1,\zeta_2,\tilde \zeta_2 \in \CF{}$ and $Z:\MP^n \to \FFo{n}$ defined by
\begin{align}\label{eq8041}
ZP(x)&=\eutr{+}{\zeta_1}(P)(x) +\eutr{+}{\zeta_1^R}(-P)(x) + \eutr{-}{\zeta_2}(P)(x) + \eutr{-}{\zeta_2^R}(-P)(x)
  \notag \\
&\qquad +\eutr{+}{\tilde{\zeta}_1}([P,o])(x) + \eutr{+}{\tilde{\zeta}_1^R}(-[P,o])(x) + \eutr{-}{\tilde{\zeta}_2}([P,o])(x) + \eutr{-}{\tilde{\zeta}_2^R}(-[P,o])(x)
\end{align}
for every $x \in \ro{n}$, $P \in \mpo{n}$ with $\dim P<n$ and for every $P \in \MP^n$ with $\dim P<n-1$.
If $Z$ is continuous, then
\begin{align*}
 \zeta_2+\tilde{\zeta}_2 \equiv 0,~\zeta_1 \equiv 0.
\end{align*}
\end{lem}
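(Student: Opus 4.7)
The plan is to imitate the proof of Lemma~\ref{lem:contin2} in two steps: first to extract $\zeta_2+\tilde{\zeta}_2\equiv 0$ from sequences inside $\mpo{n}$, then to extract $\zeta_1\equiv 0$ from a sequence in $\MP^n\setminus\mpo{n}$ converging to a polytope in $\mpo{n}$.

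For the first step, I would exploit the fact that $[P,o]=P$ whenever $P\in\mpo{n}$. Since $\face{+}{}(P)=\face{}{}(P)$ in this case, Corollary~\ref{co:eusupp} gives $\eutr{+}{\eta}(P)=\eta(-h_{-P})$ and $\eutr{+}{\eta^R}(-P)=\eta(h_P)$ for any $\eta\in\FF{}$, so the formula \eqref{eq8041} collapses on $\mpo{n}$ (with $\dim P<n$) to
\begin{align*}
ZP(x)=(\zeta_1+\tilde{\zeta}_1)(h_P(x))+(\zeta_1+\tilde{\zeta}_1)^R(h_{-P}(x))+\eutr{-}{\zeta_2+\tilde{\zeta}_2}(P)(x)+\eutr{-}{(\zeta_2+\tilde{\zeta}_2)^R}(-P)(x),
\end{align*}
which is exactly \eqref{eq:low} with $\zeta_i$ replaced by $\zeta_i+\tilde{\zeta}_i$. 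Applying Lemma~\ref{lem:contin2} to the restriction of $Z$ to $\mpo{n}$ then yields $\zeta_2+\tilde{\zeta}_2\equiv 0$.

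For the second step, I would use the translated segment $P_r=[re_n,e_1+re_n]$ for $r>0$, which lies in $\MP^n\setminus\mpo{n}$ with $\dim P_r=1<n-1$, so case~(b) of the defining domain applies; as $r\to 0^+$, $P_r\to P_0=[o,e_1]\in\mpo{n}$, to which case~(a) applies. A direct check of the three normal cones of $P_r$ shows that $h_{P_r}$ is sign-indefinite on each of them, so $\face{\pm}{}(P_r)=\face{\pm}{}(-P_r)=\emptyset$ and the four $\zeta_i$-contributions to $ZP_r$ vanish identically. For the $\tilde{\zeta}_i$-terms, Corollary~\ref{co:eusupp} applies to $[P_r,o]$ and $-[P_r,o]$ (both containing the origin) and simplifies $\eutr{+}{\tilde{\zeta}_1}([P_r,o])(se_1)+\eutr{+}{\tilde{\zeta}_1^R}(-[P_r,o])(se_1)$ to $\tilde{\zeta}_1(0)+\tilde{\zeta}_1(s)$, while the two $\eutr{-}{\cdot}$-sums over the four faces containing $o$ of the triangle $[P_r,o]$ (resp.\ $-[P_r,o]$) telescope to zero at $x=se_1$. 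The net value is $ZP_r(se_1)=\tilde{\zeta}_1(0)+\tilde{\zeta}_1(s)$ for every $r>0$ and $s\neq 0$. Computing $ZP_0(se_1)$ directly from \eqref{eq8041}, equating limits, and substituting $\zeta_2+\tilde{\zeta}_2\equiv 0$, collapses to the single relation $\zeta_1(s)+\zeta_1(0)=0$ for all $s\neq 0$. Continuity of $\zeta_1\in\CF{}$ at $0$ then forces $\zeta_1(0)=0$, and hence $\zeta_1\equiv 0$.

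The main obstacle is the combinatorial evaluation of $\eutr{+}{}$ and $\eutr{-}{}$ on the triangle $[P_r,o]$ and on its reflection; Corollary~\ref{co:eusupp} renders the $\eutr{+}{}$-sums trivial (since both triangles contain the origin), and the $\eutr{-}{}$-sums reduce to alternating support-function evaluations that telescope. A secondary subtlety is verifying that every face of $P_r$ admits a normal direction on which $h_{P_r}$ is not sign-definite, so that all four $\zeta_i$-terms cleanly drop out of $ZP_r$, allowing the continuity equation to isolate $\zeta_1$ after Step~1 has killed $\zeta_2+\tilde{\zeta}_2$.
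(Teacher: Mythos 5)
Your proposal is correct and takes essentially the same route as the paper: collapsing \eqref{eq8041} on $\mpo{n}$ via Corollary \ref{co:eusupp} and invoking Lemma \ref{lem:contin2} is exactly the paper's first step. Your second step is the paper's argument with the translated segments $[-te_1,e_1]+re_2$ replaced by $[o,e_1]+re_n$, which gives the relation $\zeta_1(s)+\zeta_1(0)=0$ instead of $\zeta_1(s)+\zeta_1(-st)=0$, and concludes in the same way from continuity of $\zeta_1$ at $0$.
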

\begin{proof}
By \eqref{defval3}, Corollary \ref{co:eusupp}, the description \eqref{eq8041} shows that
\begin{align*}
  ZP(x)&=(\zeta_1 + \tilde \zeta_1)(h_P(x)) + (\zeta_2 + \tilde \zeta_2)(h_{-P}(x))
  + \eutr{-}{\zeta_2 + \tilde \zeta_2 }(P)(x) + \eutr{-}{(\zeta_2 + \tilde \zeta_2)^R}(-P)(x)
\end{align*}
for every $x \in \ro{n}$ and $P \in \mpo{n}$ with $\dim P<n$.
Hence the relation $\zeta_2+\tilde{\zeta}_2 \equiv 0$ follows from Lemma \ref{lem:contin2}.

Let $r \ge 0$, $s,t>0$ and $P_{r}=[-te_1,e_1]+re_2$.
The description \eqref{eq8041} gives that
\begin{align*}
Z P_r(se_1)= \tilde{\zeta}_1(s)+ \tilde{\zeta}_1(-st)
\end{align*}
for $r>0$, and
\begin{align*}
Z P_0(se_1)= \tilde{\zeta}_1(s)+\tilde{\zeta}_1(-st)+\zeta_1(s)+\zeta_1(-st).
\end{align*}
Similar to the proof in Lemma \ref{lem:contin2}, we have $\zeta_1 \equiv 0$.
\end{proof}

\begin{mthm}\label{mthm:sln4}
Let $n \ge 3$. A map $Z: \MP^n \to \FFo{n}$ is a continuous and $\sln$ covariant valuation
if and only if
there are $\zeta,\tilde{\zeta} \in \CF{}$ and $\mu,\tilde{\mu} \in \cms$ such that
\begin{align*}
ZP(x)&= \zeta(h_P(x)) +\zeta(-h_{-P}(x)) + \frac{1}{|x|} \int_{\R} V_{n-1}(P \cap H_{x,t}) d\mu(t)
\\
    &\qquad + \tilde{\zeta}(h_{[P,o]}(x)) +\tilde{\zeta}(-h_{-[P,o]}(x)) + \frac{1}{|x|} \int_{\R} V_{n-1}([P,o] \cap H_{x,t}) d\tilde{\mu}(t)
\end{align*}
for every $P \in \MP^n$ and $x \in \ro{n}$.
\end{mthm}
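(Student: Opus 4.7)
The plan is to refine the representation of Theorem \ref{mthm:sln2} (the regular case on $\MP^n$) by using continuity, in strict analogy with how Theorem \ref{mthm:sln3} refines Theorem \ref{mthm:sln}. For the if direction, the key observation is that $P\mapsto[P,o]$ is valuation-compatible on $\MP^n$: whenever $K,L,K\cup L,K\cap L\in\MP^n$, one has $[K,o]\cup[L,o]=[K\cup L,o]$ (immediate from convexity of $K\cup L$) and $[K,o]\cap[L,o]=[K\cap L,o]$ (by a ray-wise argument showing that each ray from $o$ meets both sides in the same interval). Combined with Lemmas \ref{lem:suppf}, \ref{lem:ref}, and \ref{lem:momf}, this makes each summand of the theorem a continuous $\sln$ covariant valuation, and therefore so is their sum.

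For the only if direction, I would first apply Theorem \ref{mthm:sln2} (valid since continuity implies regularity) to obtain the representation with parameters $\zeta_1,\zeta_2,\tilde{\zeta}_1,\tilde{\zeta}_2\in\CF{}$ and $\mu,\tilde{\mu}\in\cms$. Restricting to polytopes $P$ with $\dim P<n$ and $\dim[P,o]<n$ makes both section integrals vanish, and the resulting expression is exactly the one classified in Lemma \ref{lem:contin3}. Continuity of $Z$ then forces $\zeta_1\equiv 0$ and $\tilde{\zeta}_2\equiv -\zeta_2$. Since $o\in[P,o]$, we have $\face{+}{}([P,o])=\face{}{}([P,o])$, so Corollary \ref{co:eusupp} yields $\eutr{+}{\tilde{\zeta}_1}([P,o])(x)=\tilde{\zeta}_1(-h_{-[P,o]}(x))$ and $\eutr{+}{\tilde{\zeta}_1^R}(-[P,o])(x)=\tilde{\zeta}_1(h_{[P,o]}(x))$.

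The critical remaining step is the auxiliary identity
\begin{equation*}
\eutr{-}{\zeta}(P)(x)-\eutr{-}{\zeta}([P,o])(x)=\zeta(-h_{-P}(x))-\zeta(-h_{-[P,o]}(x)),
\end{equation*}
valid for all $P\in\MP^n$ and $\zeta\in\FF{}$; this is the formula \eqref{eq:eu4} foreshadowed after Theorem \ref{mthm:sln2}. Both sides are $\GL{}{n}$ covariant valuations of weight $0$ on $\MP^n$ (the LHS by Lemma \ref{lem:val1} together with valuation-compatibility of $[\cdot,o]$; the RHS via the sorting identity $\zeta(a)+\zeta(b)=\zeta(\min\{a,b\})+\zeta(\max\{a,b\})$ combined with $h_{-(K\cup L)}=\max\{h_{-K},h_{-L}\}$ and $h_{-(K\cap L)}=\min\{h_{-K},h_{-L}\}$), so by Lemma \ref{lemuq2} it suffices to verify equality on $sT^d$ and $s[e_1,\dots,e_d]$. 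The case $P=sT^d$ is trivial because $[sT^d,o]=sT^d$. For $P=s[e_1,\dots,e_d]$ one first observes $\face{-}{}(s[e_1,\dots,e_d])=\emptyset$: for every face $F$ there is a direction $u\in N(P,F)$ supported on the vertex set of $F$ with $h_P(u)=s>0$. Hence the LHS collapses to $-\eutr{-}{\zeta}(sT^d)(x)$, a sum over subsets $S\subseteq\{e_1,\dots,e_d\}$ which I would split according to the sign of $\min_i x_i$: when $\min_i x_i\ge 0$ the pairing $\zeta(\max_{i\in S} x_i)+\zeta(\max_{i\in S\cup\{j\}} x_i)=\zeta(\min)+\zeta(\max)$ telescopes it to $\zeta(s\min_i x_i)-\zeta(0)$, while for $\min_i x_i<0$ the contributions cancel in pairs; both outcomes agree with the RHS obtained from $-h_{-s[e_1,\dots,e_d]}(x)=s\min_i x_i$ and $-h_{-sT^d}(x)=\min\{0,s\min_i x_i\}$.

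Applying this identity to the pairs $(\zeta_2,P)$ and $(\zeta_2^R,-P)$ rewrites the four residual $\eutr{-}{}$ terms as support-function expressions, and setting $\zeta:=\zeta_2$ and $\tilde{\zeta}:=\tilde{\zeta}_1-\zeta_2$ assembles the whole representation into the form asserted by Theorem \ref{mthm:sln4}. I expect the main obstacle to be the combinatorial telescoping in the auxiliary identity on $s[e_1,\dots,e_d]$: the sign split on $\min_i x_i$ and the bookkeeping over subsets of $\{e_1,\dots,e_d\}$ are elementary but require care to see the collapse cleanly.
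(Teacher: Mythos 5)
Your argument has one genuine gap, at its very first step of the ``only if'' direction: you invoke Theorem \ref{mthm:sln2} for $Z$ on all of $\MP^n$, justified only by ``continuity implies regularity''. But the hypotheses of Theorem \ref{mthm:sln2} also require $Z$ to take values in $\CFr{n}$, i.e.\ $\lim_{r \to 0} r\,ZP(rx)=0$ for every $P$ and every $x \in \ro{n}$. Continuity of $Z$ gives, via Lemma \ref{lem:contin}, that each $ZP$ is continuous on $\ro{n}$ and that $s \mapsto Z(sP)(s^{-1}x)$ is regular, but it does not give this decay condition a priori: writing $ZP(rx)=Z(\phi_{1/r}P)(x)$ with the maps $\phi_s$ from Lemma \ref{lem:contin}, the bodies $\phi_{1/r}P$ do not converge as $r \to 0$, so continuity yields no control there. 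In the paper the decay is obtained only \emph{after} subtracting the lower-dimensional part, from simplicity together with the homogeneity relation \eqref{homco1} (Lemma \ref{lem:fe1}~(b) and the ``moreover'' clause of Lemma \ref{lem:contin}); and it is used essentially, since property $(\ast)$ in the proof of Lemma \ref{lem:simple1} (hence the construction of $\mu,\tilde\mu$) needs $ZT^n \in \CFr{n}$. This is exactly why the paper does not cite Theorem \ref{mthm:sln2} wholesale but applies it only to lower-dimensional polytopes (where $ZP \in \CFo{n}$ suffices, as remarked in its proof of Theorem \ref{mthm:sln4}) and then treats $sT^n$ and $s[e_1,\dots,e_n]$ through the remark after Theorem \ref{mthm:simpsln}, Lemma \ref{lemuq2} and the $\tilde Z T=Z\tilde T$ device of Theorem \ref{mthm:sln2}. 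Your proof needs the same detour (or an independent a priori proof that $ZP\in\CFr{n}$); as written, the step fails.

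The remainder is correct and partly a genuine variant of the paper. The use of Lemma \ref{lem:contin3} to force $\zeta_1 \equiv 0$ and $\tilde\zeta_2 \equiv -\zeta_2$, and the final bookkeeping $\zeta=\zeta_2$, $\tilde\zeta=\tilde\zeta_1-\zeta_2$ after rewriting via \eqref{eq:eu4} and Corollary \ref{co:eusupp}, reproduce the paper's computation. Your proof of the key identity \eqref{eq:eu4} is genuinely different: the paper proves it for all $P$ at once by showing $\face{}{}(P)\setminus\face{-}{}(P)=\face{}{}([P,o])\setminus\face{-}{}([P,o])$ and applying Corollary \ref{co:eusupp}, whereas you reduce by Lemma \ref{lemuq2} to $sT^d$ (trivial, since $[sT^d,o]=sT^d$) and to $s[e_1,\dots,e_d]$, where indeed $\face{-}{}(s[e_1,\dots,e_d])=\emptyset$ and the alternating sum over the faces of $sT^d$ containing $o$ collapses (group the subsets $S\subseteq\{1,\dots,d\}$ by the element realizing $\max_{i\in S}x_i$ when $\min_{1\le i\le d}x_i\ge 0$, and by $S\cap\{i:x_i>0\}$ when $\min_i x_i<0$) to $\zeta(s\min_i x_i)-\zeta(0)$, respectively $0$, matching $\zeta(-h_{-s[e_1,\dots,e_d]}(x))-\zeta(-h_{-sT^d}(x))$; your displayed ``pairing'' identity is garbled, but the conclusion is right, so this route works and is a reasonable alternative to the face-matching argument, at the price of also checking that both sides of \eqref{eq:eu4} are valuations (which your compatibility of $P\mapsto[P,o]$ with unions and intersections supplies; note that the intersection identity does require convexity of $K\cup L$, which your hypotheses provide).
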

\begin{proof}
We first prove that \eqref{eq:eu4} holds for every $P \in \MP^n$ and $\zeta \in \FF{}$.
By \eqref{defval3}, \eqref{defval4} and \eqref{eq:eusupp}, we only need to show that
\begin{align*}
\face{}{}(P) \setminus \face{-}{}(P)= \face{}{}([P,o]) \setminus \face{-}{}([P,o]).
\end{align*}
Let $F \in \face{}{}(P) \setminus \face{-}{}(P)$ first.
There is a $u \in \relint N(P,F)$ such that $h_P(u)>0$.
Hence $P \subset H_{u,h_P(u)}^-$, $o \in \relint H_{u,h_P(u)}^-$ and $F = P \cap H_{u,h_P(u)}$, where $H_{u,h_P(u)}^- :=\{x \in \R^n: x \cdot u \leq h_P(u)\}$.
Thus $F \in \face{}{}([P,o]) \setminus \face{-}{}([P,o])$.
Also, let $F' \in \face{}{}([P,o]) \setminus \face{-}{}([P,o])$.
Similarly there is a plane $H_{u,h_{[P,o]}(u)}$ that does not contain the origin and $F'=[P,o] \cap H_{u,h_{[P,o]}(u)}$.
Hence $F' \in \face{}{}(P) \setminus \face{-}{}(P)$.
Thus \eqref{eq:eu4} holds.

Lemma \ref{lem:contin} shows that $ZP \in \CFo{n}$ for every $P \in \MP^n$.
Now applying Theorem \ref{mthm:sln2} to $P \in \mpo{n}$ with $\dim P<n$ and $P \in \MP^n$ with $\dim P<n-1$ (by the proof of Theorem \ref{mthm:sln} and Theorem \ref{mthm:sln2}, the assumption $ZP \in \CFo{n}$ is enough for this case), we get \eqref{eq8041} holds
for every $P \in \mpo{n}$ with $\dim P<n$ and for every $P \in \MP^n$ with $\dim P<n-1$.
By the continuity of $Z$ and Lemma \ref{lem:contin3}, we get $\zeta_2+\tilde{\zeta}_2 \equiv 0$ and $\zeta_1 \equiv 0$.

Combining \eqref{eq8041} with \eqref{eq:eu4}, \eqref{defval3} and Corollary \ref{co:eusupp}, there are suitable functions $\zeta,\tilde{\zeta} \in \CF{}$ such that
\begin{align*}
ZP(x)=\zeta(h_P(x)) +\zeta(-h_{-P}(x))+\tilde{\zeta}(h_{[P,o]}(x)) +\tilde{\zeta}(-h_{-[P,o]}(x))
\end{align*}
for every $P \in \MP^n$ if $\dim P<n-1$ and for every $P \in \mpo{n}$ if $\dim P<n$.

Define $\bar Z: \MP^n \to \FFo{n}$ by
\begin{align*}
  \bar ZP(x)=ZP(x)-(\zeta(h_P(x)) +\zeta(-h_{-P}(x))+\tilde{\zeta}(h_{[P,o]}(x)) +\tilde{\zeta}(-h_{-[P,o]}(x))).
\end{align*}
By Lemma \ref{lemuq2}, we only need to show that
\begin{align*}
\bar ZP(x)=\frac{1}{|x|} \int_{\R} V_{n-1}(P \cap H_{x,t}) d\mu(t)
+ \frac{1}{|x|} \int_{\R} V_{n-1}([P,o] \cap H_{x,t}) d\tilde{\mu}(t)
\end{align*}
for all $P=sT^n$ and $P=s[e_1,\dots,e_n]$ with $s>0$.
By Lemma \ref{lem:contin}, the remark after Theorem \ref{mthm:simpsln},
and the same idea of the proofs of Theorem \ref{mthm:sln2}, we get the desired result.
\end{proof}

Follows directly from Theorem \ref{mthm:sln4} and conclusions in \S \ref{sec:val}, we get the following.
\begin{mthm}\label{mthm:sln5}
Let $n \ge 3$. A map $Z: \MK^n \to \FFo{n}$ is a continuous and $\sln$ covariant valuation
if and only if
there are $\zeta,\tilde{\zeta} \in \CF{}$ and $\mu,\tilde{\mu} \in \cms$ such that
\begin{align*}
ZK(x)&= \zeta(h_K(x)) +\zeta(-h_{-K}(x)) + \frac{1}{|x|} \int_{\R} V_{n-1}(K \cap H_{x,t}) d\mu(t)
\\
    &\qquad + \tilde{\zeta}(h_{[K,o]}(x)) +\tilde{\zeta}(-h_{-[K,o]}(x)) + \frac{1}{|x|} \int_{\R} V_{n-1}([K,o] \cap H_{x,t}) d\tilde{\mu}(t)
\end{align*}
for every $K \in \MK^n$ and $x \in \ro{n}$.
\end{mthm}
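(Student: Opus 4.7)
\medskip

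\noindent\textbf{Proof plan for Theorem \ref{mthm:sln5}.}
The ``if'' direction will follow immediately from the results already assembled in \S \ref{sec:val}. Indeed, Lemma \ref{lem:suppf} shows that $K \mapsto \zeta(h_K(x))$ is a continuous $\GL{}{n}$ covariant valuation of weight $0$ whenever $\zeta \in \CF{}$, and Lemma \ref{lem:ref} then gives that the reflected and $[K,o]$-composed variants $K \mapsto \zeta(-h_{-K}(x))$, $K \mapsto \tilde\zeta(h_{[K,o]}(x))$, $K \mapsto \tilde\zeta(-h_{-[K,o]}(x))$ are also continuous $\sln$ covariant valuations on $\MK^n$. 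Finally, Lemma \ref{lem:momf} yields that the measure-theoretic terms $K \mapsto \frac{1}{|x|}\int_\R V_{n-1}(K \cap H_{x,t}) d\mu(t)$ and $K \mapsto \frac{1}{|x|}\int_\R V_{n-1}([K,o] \cap H_{x,t}) d\tilde\mu(t)$ are continuous, $\sln$ covariant $\CFr{n}$-valued valuations (for the second, using again that $K \mapsto [K,o]$ is a continuous map with respect to the Hausdorff metric, so the composition inherits continuity and the valuation property from Lemma \ref{lem:ref}).

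For the ``only if'' direction, the plan is to exploit Theorem \ref{mthm:sln4} by restriction and then extend by density. First, restrict the given continuous $\sln$ covariant valuation $Z: \MK^n \to \FFo{n}$ to the subspace $\MP^n$ of polytopes. This restriction is still a continuous $\sln$ covariant valuation, so by Theorem \ref{mthm:sln4} there exist $\zeta,\tilde\zeta \in \CF{}$ and $\mu,\tilde\mu \in \cms$ such that the desired representation
\begin{align*}
ZP(x) &= \zeta(h_P(x)) + \zeta(-h_{-P}(x)) + \frac{1}{|x|}\int_\R V_{n-1}(P \cap H_{x,t}) d\mu(t)\\
      &\quad + \tilde\zeta(h_{[P,o]}(x)) + \tilde\zeta(-h_{-[P,o]}(x)) + \frac{1}{|x|}\int_\R V_{n-1}([P,o] \cap H_{x,t}) d\tilde\mu(t)
\end{align*}
holds for every $P \in \MP^n$ and $x \in \ro{n}$.

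Next, denote by $\tilde Z: \MK^n \to \FFo{n}$ the map defined by the same right-hand side with $P$ replaced by an arbitrary $K \in \MK^n$. By the ``if'' direction already established, $\tilde Z$ is a continuous $\sln$ covariant valuation. Hence $Z$ and $\tilde Z$ are both continuous $\sln$ covariant maps on $\MK^n$ that coincide on the dense subspace $\MP^n$ (density in the Hausdorff metric is standard, see Schneider \cite{Schb2}). Fix any $K \in \MK^n$ and $x \in \ro{n}$; choosing a sequence $P_i \in \MP^n$ with $P_i \to K$ and using the pointwise continuity of both $Z$ and $\tilde Z$ at $x$, we conclude $ZK(x) = \lim_i ZP_i(x) = \lim_i \tilde Z P_i(x) = \tilde Z K(x)$, which finishes the argument.

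The only genuine work beyond invoking Theorem \ref{mthm:sln4} is the density/continuity argument in the last step, and there is no obstacle of substance there, since continuity of $Z$ is assumed pointwise in $x$ and the candidate representation is pointwise continuous in $K$ for each fixed $x$ by Lemmas \ref{lem:momf}--\ref{lem:ref}. The subtlest point to verify is continuity of the $[K,o]$-based moment term, which reduces to continuity of $K \mapsto [K,o]$ in the Hausdorff metric combined with Lemma \ref{lem:momf}; this is routine.
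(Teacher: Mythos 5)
Your proposal is correct and follows essentially the same route as the paper, which proves Theorem \ref{mthm:sln5} precisely by combining Theorem \ref{mthm:sln4} with the continuity and valuation properties established in \S \ref{sec:val}, together with approximation of convex bodies by polytopes in the Hausdorff metric. Your write-up simply makes explicit the density-plus-continuity step that the paper leaves implicit.
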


\section{Further results}
In this section, we add some assumptions to Theorem \ref{mthm:sln3} and Theorem \ref{thm:co0} to get more specific classifications and characterizations.
Some results are new (Corollaries \ref{co:hom2}, \ref{co:hom3} and some part of Corollary \ref{co:hom1}),
and some results are known (Corollaries \ref{co:real}, \ref{co:Min}, \ref{co:harm} and some part of Corollary \ref{co:hom1}).

A map $Z: \mpo{n} \to \FFo{n}$ is $q$-homogeneous if $Z(\a P)=\a^q ZP$ for any $\a >0$ and $P \in \mpo{n}$. For $x \in \ro{n}$, the half-space $H_{x,0}^+=\{y\in\R^n: x\cdot y \ge 0\}$ and $H_{x,0}^-=\{y\in\R^n: x\cdot y \le 0\}$.
\begin{cor}\label{co:hom2}
Let $n \ge 3$ and $q \in \R$. A map $Z:\mpo{n} \to \FFo{n}$ is a continuous, $\sln$ covariant and $q$-homogeneous valuation
if and only if
there are $c_1,c_2,c_3,c_4 \in \R$ such that
\begin{align*}
&ZP(x) \\
&=\begin{cases}
  c_1 h_P(x)^q +c_2h_{-P}(x)^q + c_3 \int_{P \cap H_{x,o}^+} (x \cdot y) ^{q-n} dy + c_4 \int_{P \cap H_{x,o}^-} \ab{-x \cdot y}^{q-n} dy, & q > n-1, \\
  c_1 h_P(x)^q +c_2h_{-P}(x)^q , & 0 \leq q \leq n-1,\\
  0, &q < 0
\end{cases}
\end{align*}
for every $P \in \mpo{n}$ and $x \in \ro{n}$.
\end{cor}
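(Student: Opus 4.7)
The plan is to start from Theorem \ref{mthm:sln3}, which already gives
\[ ZP(x) = \zeta(h_P(x)) + \zeta(-h_{-P}(x)) + \frac{1}{|x|}\int_{\R} V_{n-1}(P \cap H_{x,t})\,d\mu(t) \]
for some $\zeta \in C(\R)$ and $\mu \in \cms$, and then use $q$-homogeneity to pin down $\zeta$ and $\mu$. The ``if'' direction is immediate from the verifications in \S\ref{sec:val}, so only the ``only if'' direction requires work.

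To isolate $\zeta$, I would test $Z$ on lower-dimensional polytopes. If $\dim P < n$, then $V_{n-1}(P \cap H_{x,t}) = 0$ except possibly at one value of $t$, and continuity of $\mu$ kills the moment integral. Applying $q$-homogeneity to $P = [o, e_1]$ and $P = [-e_1, o]$ with $x = e_1$ yields
\[ \zeta(\alpha) + \zeta(0) = \alpha^q(\zeta(1) + \zeta(0)), \qquad \zeta(-\alpha) + \zeta(0) = \alpha^q(\zeta(-1) + \zeta(0)) \]
for every $\alpha > 0$. A short case analysis, combined with continuity of $\zeta$ at $0$, then gives: $\zeta(0) = 0$, $\zeta(t) = c_1 t^q$ on $[0,\infty)$ and $\zeta(t) = c_2 |t|^q$ on $(-\infty, 0]$ when $q > 0$; $\zeta$ constant when $q = 0$; and $\zeta \equiv 0$ when $q < 0$.

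Next I would subtract off the support function part. The map $S(P)(x) := \zeta(h_P(x)) + \zeta(-h_{-P}(x))$, with $\zeta$ as found, is itself a continuous, $\sln$ covariant, $q$-homogeneous valuation (because $h_{\alpha P} = \alpha h_P$ and $\zeta$ is $q$-homogeneous on each half-line). Hence $\tilde Z := Z - S$ is a continuous, $\sln$ covariant, $q$-homogeneous valuation that vanishes on $\mpo{n} \cap \{\dim P < n\}$, so it must coincide with the moment valuation $M_\mu$. Using $V_{n-1}(\alpha P \cap H_{x,t}) = \alpha^{n-1}V_{n-1}(P \cap H_{x,t/\alpha})$ and the substitution $s = t/\alpha$, $q$-homogeneity of $M_\mu$ converts into the scaling identity $\mu^\alpha = \alpha^{q-n+1}\mu$ for every $\alpha > 0$, where $\mu^\alpha(A) := \mu(\alpha A)$; this is a genuine equality of measures because the family $\{s \mapsto V_{n-1}(P \cap H_{x,s}) : P \in \mpo{n},\, x \in \ro{n}\}$ is dense enough to separate signed Radon measures on $\R$.

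Finally I would analyze $\mu$ via its scaling. On each of $(0,\infty)$ and $(-\infty, 0)$, the only signed Radon measures satisfying this scaling relation are constant multiples of $t^{q-n}\,dt$ and $|t|^{q-n}\,dt$ respectively; local finiteness near $0$ requires $q - n > -1$, so $\mu \equiv 0$ whenever $q \leq n - 1$, while for $q > n-1$ the density is $c_3 \kik{(0,\infty)}(t)\, t^{q-n} + c_4 \kik{(-\infty,0)}(t)\, (-t)^{q-n}$. Continuity of $\mu$ handles the singleton $\{0\}$. In the regime $q > n-1$, Fubini's identity \eqref{eq:fubini} turns $M_\mu P(x)$ into the two half-space integrals of $(x\cdot y)^{q-n}$ and $(-x\cdot y)^{q-n}$ stated in the corollary. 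The main obstacle is the scaling-to-density step for $\mu$: ruling out nonzero $\mu$ in the regime $q \leq n - 1$ by a local-finiteness contradiction (using that $\mu^\alpha(A) = \alpha^{q-n+1}\mu(A)$ would blow up on any small neighborhood of $0$ as $\alpha\to 0^+$), and matching homogeneous densities with the correct constants in the regime $q > n - 1$.
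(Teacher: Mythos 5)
Your proposal is correct and follows essentially the same route as the paper's proof: start from Theorem \ref{mthm:sln3}, use $q$-homogeneity on the segment $[o,e_1]$ (with both orientations) to force $\zeta(t)=c_1t^q$ for $t\ge 0$, $\zeta(t)=c_2|t|^q$ for $t<0$ when $q\ge 0$ and $\zeta\equiv 0$ when $q<0$, and then use $q$-homogeneity of the remaining moment term to force $\mu$ to have density proportional to $t^{q-n}$ on each half-line, with local finiteness of $\mu$ near $0$ eliminating it when $q\le n-1$. The only (minor) difference is that you phrase the measure step as a scaling identity $\mu^\alpha=\alpha^{q-n+1}\mu$ plus a classification of homogeneously scaling signed Radon measures, whereas the paper obtains the same conclusion directly by evaluating the moment valuation on the dilated boxes $\alpha([0,1]\times[0,1]^{n-1})$, getting $\alpha^{n-1}\mu(0,\alpha)=\alpha^{q}\mu(0,1)$ and hence the density on intervals; that box computation is also exactly what justifies the separation claim you assert.
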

\begin{proof}
Let $\zeta_1(t)$ be $c_1 t^q$ for $t \ge 0$ and $c_2 (-t)^q$ for $t<0$ ($q>0$), and the density of $\mu$ be $c_3t^{q-n}$ for $t > 0$ and $c_4 (-t)^{q-n}$ for $t<0$ ($q>n-1$) in Theorem \ref{mthm:sln3} (and \eqref{eq:fubini}).
Hence the ``if" part is true.

For the ``only if" part, by Theorem \ref{mthm:sln3}, there are $\zeta \in \CF{}$ and $\mu \in \cms$ such that
\begin{align*}
ZP(x)&= \zeta(h_P(x)) +\zeta(-h_{-P}(x)) + \frac{1}{|x|} \int_{\R} V_{n-1}(P \cap H_{x,t}) d\mu(t)
\end{align*}
for every $P \in \mpo{n}$ and $x \in \ro{n}$.
Since $Z$ is $q$-homogeneous, we have
\begin{align*}
  \zeta(h_{\a P}(x)) +\zeta(-h_{-\a P}(x))=\a^q (\zeta(h_P(x)) +\zeta(-h_{-P}(x)))
\end{align*}
for $\dim P <n$, $x \in \ro{n}$ and $\a >0$.
Further setting $P=[o,e_1]$ and $x=e_1$ or $x=-e_1$, we get
\begin{align*}
  \zeta(\a)=\a^q \zeta(1), \zeta(-\a)=\a^q \zeta(-1).
\end{align*}
Let $c_1=\zeta(1)$ and $c_2=\zeta(-1)$.
For $q \ge 0$, the condition $\zeta \in \CF{}$ gives
\begin{align*}
  ZP(x)=c_1h_P(x)^q+c_2h_{-P}(x)^q
\end{align*}
for any $\dim P <n$, $x \in \ro{n}$.
For $q<0$, the condition $\zeta \in \CF{}$ gives $c_1=0$ and $c_2=0$.

Now let
\begin{align*}
  \tilde Z P(x)=\frac{1}{|x|} \int_{\R} V_{n-1}(P \cap H_{x,t}) d\mu(t).
\end{align*}
By the above argument, $\tilde Z P(x)=ZP(x)-c_1h_P(x)^q-c_2h_{-P}(x)^q$.
Hence $\tilde Z P (\cdot) $ is also $q$-homogeneous.
Set $P=[0,1] \times [0,1]^{n-1}$ and $x=e_1$.
Since $\tilde Z (\a P)( x) = \a^q \tilde Z P(x)$, we get
\begin{align*}
  \a^{n-1} \mu(0,\a )= \a^q \mu(0,1).
\end{align*}
For arbitrary $0<r<s$, together with $\mu \in \cms$, then we have
\begin{align*}
  \mu(r,s)=\mu(0,s)-\mu(0,r)=\mu(0,1) (s^{q-n+1}-r^{q-n+1}) = \mu(0,1) (q-n+1)\int_{r}^{s} t^{q-n} dt
\end{align*}
Let $c_3=\mu(0,1) (q-n+1)$.
Since the Radon measure $\mu$ on $(0,\infty)$ is determined by its values on $(r,s)$ for all $0<r<s$ (regularity of the Radon measure), $\mu$ has a density
\begin{align*}
  \zeta(t) = c_3 t^{q-n}
\end{align*}
for $t>0$.
Further since $\mu(0,s)< \infty$ for $s>0$, we have $c_3=0$ if $q \leq n-1$.
Similarly, change $x$ to $-e_1$, then $\mu$ has a density
$\zeta(t) = c_4 t^{q-n}$
for $t<0$ with a constant $c_4 \in \R$, and we have $c_4=0$ if $q \leq n-1$.
Now the proof is completed by \eqref{eq:fubini}.
\end{proof}

We can also assume that $ZP$ is $q$-homogeneous for every $P$.
For $q \in \R$, a function $f \in \FFo{n}$ is $q$-homogeneous if $f(\alpha x)=\alpha^q f(x)$ for any $\alpha >0$.
The following Corollary was established by Liu and Wang \cite{LW2020sln} for $q >-1$ such that valuations are simple and Parapatits \cite{Par14b} for $q \ge 1$ for all continuous valuations.
The proof is quite similar to that of Corollary \ref{co:hom1} (using the same specific polytopes), so we omit the proof.

\begin{cor}\label{co:hom1}
Let $n \ge 3$ and $q \in \R$. A map $Z$, mapping $\mpo{n}$ to the space of $q$-homogeneous functions on $\ro{n}$, is a continuous and $\sln$ covariant valuation
if and only if
there are $c_1,c_2,c_3,c_4 \in \R$ such that
\begin{align*}
ZP(x)=\begin{cases}
  c_1 h_P(x)^q +c_2h_{-P}(x)^q + c_3 \int_{P \cap H_{x,o}^+} (x \cdot y) ^q dy + c_4 \int_{P \cap H_{x,o}^-} \ab{-x \cdot y}^q dy, & q \ge 0, \\
  c_3 \int_{P \cap H_{x,o}^+} (x \cdot y) ^q dy + c_4 \int_{P \cap H_{x,o}^-} \ab{-x \cdot y}^q dy, & -1< q < 0,\\
  0, &q \le -1
\end{cases}
\end{align*}
for every $P \in \mpo{n}$ and $x \in \ro{n}$.
\end{cor}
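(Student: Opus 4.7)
The plan is to apply Theorem \ref{mthm:sln3} directly to reduce to the representation
\begin{align*}
ZP(x) = \zeta(h_P(x)) + \zeta(-h_{-P}(x)) + \frac{1}{|x|}\int_\R V_{n-1}(P \cap H_{x,t}) \, d\mu(t)
\end{align*}
with $\zeta \in C(\R)$ and $\mu \in \cms$, and then translate the hypothesis that $ZP$ is $q$-homogeneous in $x$ into pointwise constraints on $\zeta$ and scaling constraints on $\mu$. These constraints will force $\zeta$ and the density of $\mu$ to be power functions $|t|^q$ (or zero), exactly as in the proof of Corollary \ref{co:hom2}, but now driven by homogeneity in the \emph{argument} $x$ rather than in $P$.

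First I would pin down $\zeta$. On the $1$-dimensional polytope $P = [o,e_1]$ the section term vanishes, so evaluating at $x = \alpha e_1$ with $\alpha > 0$ yields
\begin{align*}
\zeta(\alpha) + \zeta(0) = \alpha^q \bigl(\zeta(1) + \zeta(0)\bigr).
\end{align*}
For $q > 0$, letting $\alpha \to 0$ and using continuity of $\zeta$ forces $\zeta(0) = 0$, whence $\zeta(t) = c_1 t^q$ on $[0,\infty)$; for $q = 0$ the equation and continuity make $\zeta$ constant on $[0,\infty)$; for $q < 0$, the blow-up $\alpha^q \to \infty$ forces $\zeta(1) + \zeta(0) = 0$, so $\zeta$ is constant on $(0,\infty)$ and continuity at $0$ then pins that constant to $0$. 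Evaluating at $x = -\alpha e_1$ symmetrically determines $\zeta$ on $(-\infty,0]$ as $c_2 |t|^q$ for $q \ge 0$ and $0$ for $q < 0$.

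Next I would determine $\mu$ by passing to the simple part $\tilde Z P(x) := \frac{1}{|x|}\int_\R V_{n-1}(P \cap H_{x,t})\, d\mu(t)$, which remains continuous, $\sln$ covariant, and $q$-homogeneous in $x$. Evaluating at $P = [0,1]^n$ and $x = \alpha e_1$, and using $V_{n-1}(P \cap H_{\alpha e_1, t}) = \kik{[0,\alpha]}(t)$, I obtain $\mu([0,\alpha])/\alpha = \alpha^q \mu([0,1])$, i.e., $\mu([0,\alpha]) = \alpha^{q+1}\mu([0,1])$. For $q \le -1$, continuity of $\mu$ at $0$ (so $\mu([0,\alpha]) \to 0$ as $\alpha \to 0$) forces $\mu([0,1]) = 0$ and hence $\mu \equiv 0$ on $(0,\infty)$; for $q > -1$, extending to arbitrary subintervals $(r,s)$ shows that $\mu$ has density $c_3 t^q$ on $(0,\infty)$ with $c_3 = (q+1)\mu([0,1])$. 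The case $x = -\alpha e_1$ produces the analogous density $c_4 |t|^q$ on $(-\infty,0)$. Fubini's formula \eqref{eq:fubini} then converts the $\mu$-integral into the two claimed integrals $c_3 \int_{P \cap H_{x,o}^+} (x \cdot y)^q dy + c_4 \int_{P \cap H_{x,o}^-} (-x \cdot y)^q dy$.

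The ``if'' direction is routine: for $q \ge 0$ the maps $P \mapsto h_P^q$ and $P \mapsto h_{-P}^q$ are continuous $\sln$ covariant valuations by Lemmas \ref{lem:suppf} and \ref{lem:ref}; for $q > -1$, Lemma \ref{lem:momf} applied to the measures with densities $c_3 t^q \kik{(0,\infty)}$ and $c_4 |t|^q \kik{(-\infty,0)}$ gives the required integral valuations, and $q$-homogeneity in $x$ is a direct change-of-variable computation. The main obstacle I expect is handling the three critical thresholds cleanly: the $q = 0$ case requires a careful interpretation of $t^0$ at $t = 0$ via continuity; the $q = -1$ case uses the hypothesis that $\mu$ is \emph{continuous} in an essential way to kill an otherwise surviving constant mass; and the integrability justifying Fubini for the $-1 < q < 0$ regime rests on the boundedness of $t \mapsto V_{n-1}(P \cap H_{x,t})$ on its compact support, which makes $t^q$ locally $\mu$-integrable even though $t^q \to \infty$ as $t \to 0$.
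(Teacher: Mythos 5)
Your proposal is correct and takes essentially the same route the paper intends: the paper omits this proof, noting it parallels the argument for Corollary \ref{co:hom2} with the same test polytopes, and your reduction via Theorem \ref{mthm:sln3} followed by evaluating on $P=[o,e_1]$ and the unit cube with $q$-homogeneity in $x$ (rather than in $P$) to force $\zeta$ and the density of $\mu$ to be the power functions $c_1t^q,c_2|t|^q,c_3t^q,c_4|t|^q$, and to vanish for $q<0$ resp.\ $q\le-1$, is exactly that reconstruction. The threshold cases you single out ($q=0$, $q=-1$, and local integrability of $t^q$ for $-1<q<0$) are treated correctly.
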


A special case of $q=0$ in Corollary \ref{co:hom1} is the classification of constant function-valued valuations, which is equivalent with the classification of $\sln$ invariant real-valued valuations.

\begin{cor}\label{co:real}
Let $n \ge 3$. A map $Z:\mpo{n} \to \R$ is a continuous and $\sln$ invariant valuation
if and only if
there are $c_1,c_2\in \R$ such that
\begin{align*}
ZP=c_1V_0(P)+c_2 V_n(P)
\end{align*}
for every $P \in \mpo{n}$.
\end{cor}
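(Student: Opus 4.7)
The plan is to derive Corollary \ref{co:real} as the $q = 0$ specialization of Corollary \ref{co:hom1}, applied to $Z$ viewed as a constant function-valued valuation.

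For the ``if" direction, I would verify directly that $V_0$ and $V_n$ are continuous $\sln$ invariant valuations on $\mpo{n}$: the Euler characteristic is identically $1$ (hence trivially invariant, continuous, and a valuation by inclusion-exclusion), while Lebesgue volume is a continuous valuation on $\MK^n$ with $V_n(\phi P) = |\det \phi|\, V_n(P) = V_n(P)$ for every $\phi \in \sln$. Linear combinations inherit these properties.

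For the ``only if" direction, I would define $\tilde Z : \mpo{n} \to \FFo{n}$ by $\tilde Z P(x) := ZP$ for every $x \in \ro{n}$. Then $\tilde Z P$ is a constant function of $x$, so $\tilde Z P$ is $0$-homogeneous, and $\tilde Z$ inherits the valuation property from $Z$. The $\sln$ invariance of $Z$ translates directly into $\sln$ covariance of $\tilde Z$, since
\[
\tilde Z(\phi P)(x) = Z(\phi P) = ZP = \tilde Z P(\phi^t x),
\]
and continuity of $\tilde Z$ in each $x$ and in $P$ follows from continuity of $Z$. Thus $\tilde Z$ satisfies the hypotheses of Corollary \ref{co:hom1} with $q = 0$.

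Applying Corollary \ref{co:hom1} then yields constants $c_1, c_2, c_3, c_4 \in \R$ with
\[
\tilde Z P(x) = c_1 h_P(x)^0 + c_2 h_{-P}(x)^0 + c_3 \int_{P \cap H_{x,o}^+} (x \cdot y)^0 \dif y + c_4 \int_{P \cap H_{x,o}^-} |{-x \cdot y}|^0 \dif y,
\]
which, with the usual convention that the integrands equal $1$ off a null set, simplifies to $(c_1 + c_2) + c_3 V_n(P \cap H_{x,o}^+) + c_4 V_n(P \cap H_{x,o}^-)$. Since the left-hand side is independent of $x$, I would exploit this by comparing the expressions at $x$ and at $-x$: this swaps the two half-spaces and gives $(c_3 - c_4)\bigl(V_n(P \cap H_{x,o}^+) - V_n(P \cap H_{x,o}^-)\bigr) = 0$. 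Choosing $P$ and $x$ so that the two half-volumes differ (e.g., $P = T^n$ and $x = e_1$) forces $c_3 = c_4$. Setting $c_1' := c_1 + c_2$ and $c_2' := c_3$ and using $V_0(P) = 1$ on $\mpo{n}$ gives $ZP = c_1' V_0(P) + c_2' V_n(P)$.

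The main delicate step is the correct interpretation of Corollary \ref{co:hom1} at the endpoint $q = 0$, in particular treating $h_P(x)^0$ and $(x \cdot y)^0$ as the constant function $1$ (either by convention or by taking the continuous limit from $q > 0$), and recognizing that the $x$-independence of $\tilde Z P(x)$ is what collapses the four parameters from Corollary \ref{co:hom1} down to the two parameters $c_1', c_2'$ of the final statement.
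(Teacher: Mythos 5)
Your proposal is correct and follows essentially the same route as the paper: embed $Z$ as the constant function-valued valuation $\tilde Z P(x)=ZP$, apply Corollary \ref{co:hom1} with $q=0$, and use the $x$-independence (equivalently $\tilde ZP(x)=\tilde ZP(-x)$ for a suitably asymmetric $P$) to force $c_3=c_4$ and collapse to $c_1V_0(P)+c_2V_n(P)$. The only extra care you add, the $q=0$ convention $h_P(x)^0=(x\cdot y)^0=1$, matches how the paper itself reads that corollary.
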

\begin{proof}
Set $\tilde Z P(x)= ZP$ for any $P \in \mpo{n}$ and $x \in \ro{n}$.
Hence $\tilde Z$ is a continuous and $\sln$ covariant valuation.
Also, $\tilde ZP$ is a $0$-homogeneous function.
By Corollary \ref{co:hom1},
\begin{align*}
  \tilde ZP(x)=a_1 +a_2 + a_3 V_n(P \cap H_{x,o}^+) + a_4 V_n(P \cap H_{x,o}^-)
\end{align*}
with some constants $a_1,a_2,a_3,a_4 \in \R$.
Choosing an asymmetric $P \in \mpo{n}$, we get $a_3=a_4$ since $\tilde ZP(x) = \tilde ZP(-x)$.
Thus we get the desired result.
\end{proof}

The function $ZP(x)=\int_P \log |x \cdot y| dy$ is not homogeneous on $x$,
but
\begin{align*}
  ZP(\a x)= ZP(x) + V(P) \log \a
\end{align*}
for any $P \in \mpo{n}$, $\a >0$ and $x \in \ro{n}$.

\begin{cor}\label{co:hom3}
Let $n \ge 3$. A map $Z:\mpo{n} \to \FFo{n}$ is a continuous and $\sln$ covariant valuation
which satisfies the following property that
\begin{align}\label{eq:exphom}
  ZP(\a x)= ZP(x) + \bar Z P(x) \log \a
\end{align}
with some $\bar Z P(x) \in \R$ for any $P \in \mpo{n}$, $\a >0$ and $x \in \ro{n}$,
if and only if
there are $c_1,c_2,c_3,c_4,c_5 \in \R$ such that
\begin{align*}
ZP(x)&=
  c_1 V_0(P) + c_2 V_n(P \cap H_{x,o}^+) + c_3 V_n(P \cap H_{x,o}^-) \\
  &\qquad
  + c_4 \int_{P \cap H_{x,o}^+} \log |x \cdot y| dy + c_5 \int_{P \cap H_{x,o}^-} \log |x \cdot y| dy
\end{align*}
for every $P \in \mpo{n}$ and $x \in \ro{n}$.
\end{cor}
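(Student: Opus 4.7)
The plan is to start from the representation provided by Theorem~\ref{mthm:sln3}, which gives
$$ZP(x) = \zeta(h_P(x)) + \zeta(-h_{-P}(x)) + \frac{1}{|x|}\int_{\R} V_{n-1}(P \cap H_{x,t})\, d\mu(t)$$
for some $\zeta \in \CF{}$ and $\mu \in \cms$. Since the two summands transform independently under $x \mapsto \alpha x$, my strategy is to use the hypothesis \eqref{eq:exphom} with well-chosen pairs $(P,x)$ to pin down $\zeta$ first and then the density of $\mu$.

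For $\zeta$, I would restrict attention to lower-dimensional $P \in \mpo{n}$. Since $\mu$ is atomless, the sectional integral over such $P$ vanishes, leaving $ZP(x) = \zeta(h_P(x)) + \zeta(-h_{-P}(x))$. Taking $P = [o,e_1]$ and $x = re_1$ with $r>0$ reduces \eqref{eq:exphom} to $\zeta(\alpha r) = \zeta(r) + c(r)\log\alpha$ for all $\alpha,r > 0$, with $c(r) := \bar Z P(re_1)$. Comparing $r=1$ with $r=2$ forces $c(r)$ to be a constant $c$, and continuity of $\zeta$ at $0$ (forced by $\zeta \in \CF{}$) rules out any nonzero $c$, since $\log\alpha \to -\infty$ as $\alpha \to 0^+$. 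Hence $\zeta$ is constant on $[0,\infty)$. Running the same argument with $x = -re_1$ pins down $\zeta$ on $(-\infty,0]$, and continuity at $0$ matches the two constants. Thus $\zeta \equiv \zeta(0)$ is a universal constant, and since $V_0 \equiv 1$ on $\mpo{n}$, the support-function piece collapses to $c_1 V_0(P)$ with $c_1 = 2\zeta(0)$.

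For $\mu$, I would choose $P = [0,1]^n$ and $x = \alpha e_1$ with $\alpha > 0$. Direct computation of $V_{n-1}(P \cap H_{\alpha e_1, t})$ reduces the sectional integral to $\tfrac{1}{\alpha}\mu([0,\alpha])$, so \eqref{eq:exphom} becomes $\mu([0,\alpha]) = c\alpha + d\alpha\log\alpha$ for constants $c,d$. Differentiating this distribution function identifies the density of $\mu$ restricted to $(0,\infty)$ as $(c+d) + d\log t$. A symmetric computation with $x = -\alpha e_1$ gives a density $(c'+d') + d'\log|t|$ on $(-\infty,0)$. Both densities are locally integrable, so Fubini's formula \eqref{eq:fubini} applies, and splitting the resulting integral over $P$ according to the sign of $x\cdot y$ produces the four required volume and logarithmic-integral terms with appropriate constants $c_2,c_3,c_4,c_5$. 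The ``if'' direction is then a direct scaling computation: the half-spaces $H_{x,0}^\pm$ are invariant under $x \mapsto \alpha x$ for $\alpha > 0$, while $\log|\alpha x\cdot y| = \log\alpha + \log|x\cdot y|$ produces a $\log\alpha$ term weighted by the appropriate half-volume, exactly matching the structure of \eqref{eq:exphom}.

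The main obstacle is the functional-equation stage: one must carefully distinguish which part of the representation absorbs the $\log\alpha$ behaviour. A priori both $\zeta$ and the density of $\mu$ could carry a logarithmic term, but continuity $\zeta \in \CF{}$ prevents any such term in $\zeta$ while local integrability still permits a $\log|t|$ term in the density of $\mu$. This asymmetry channels all the ``exponential-type'' information into $\mu$, and the rest of the proof is a clean extraction of the distribution function and an application of Fubini.
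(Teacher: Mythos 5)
Your proposal is correct and follows essentially the same route as the paper: it reduces to the representation of Theorem \ref{mthm:sln3}, pins down $\zeta$ on the segment $[o,e_1]$ using continuity against the $\log\a$ term, extracts the distribution function of $\mu$ from the unit cube, and concludes via \eqref{eq:fubini}. The only (harmless) deviation is that you treat the values $\bar Z P(x)$ at the test pairs as unknown constants instead of first classifying $\bar Z$ through Corollary \ref{co:hom1} with $q=0$, as the paper does, which slightly shortens the argument without changing its substance.
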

\begin{proof}
Let $\zeta_1 \equiv c_1$ and the density of $\mu$ be $c_2+c_4 \log |t|$ for $t > 0$ and $c_3+c_5 \log |t|$ for $t<0$ in Theorem \ref{mthm:sln3} (and \eqref{eq:fubini}).
Also, \eqref{eq:exphom} holds for $\bar ZP(x)=c_4V_n(P \cap H_{x,o}^+) + c_5 V_n(P \cap H_{x,o}^-)$.
Hence the ``if" part is true.

Since $Z: \mpo{n} \to \FFo{n}$ is a continuous, $\sln$ covariant valuation,
the property \eqref{eq:exphom} implies directly that $\bar Z: \mpo{n} \to \FFo{n}$ is also a continuous, $\sln$ covariant valuation.
Moreover, for any $\a,\beta >0$, \eqref{eq:exphom} gives
\begin{align*}
  ZP(\a \beta x)= ZP(x) + \bar Z P(x) \log (\a \beta)
\end{align*}
and
\begin{align*}
  ZP(\a \beta x)= ZP(\a x) + \bar Z P(\a x) \log \beta = ZP(x) + \bar Z P (x) \log \a + \bar Z P(\a x) \log \beta.
\end{align*}
Hence $\bar Z P (\cdot)$ is $0$-homogeneous.
Now by Corollary \ref{co:hom1} for $q=0$, there are $b_0,b_1,b_2 \in \R$ such that
\begin{align}\label{eq:exphom2}
  \bar Z P(x)=b_0V_0(P)+b_1 V_n(P \cap H_{x,o}^+) + b_2 V_n(P \cap H_{x,o}^-).
\end{align}
for any $P \in \mpo{n}$ and $x \in \ro{n}$.

Next we apply Theorem \ref{mthm:sln3} and use a similar argument in Corollary \ref{co:hom2} to prove the ``only if" part.
By Theorem \ref{mthm:sln3}, there are $\zeta \in \CF{}$ and $\mu \in \cms$ such that
\begin{align*}
ZP(x)&= \zeta(h_P(x)) +\zeta(-h_{-P}(x)) + \frac{1}{|x|} \int_{\R} V_{n-1}(P \cap H_{x,t}) d\mu(t)
\end{align*}
Consider $P=[o,e_1]$ and $x=e_1$ or $x=-e_1$.
By \eqref{eq:exphom} and \eqref{eq:exphom2}, we get
\begin{align*}
  \zeta(\a)=\zeta(1) + b_0 \log \a,~\zeta(-\a)=\zeta(-1)+b_0 \log \a
\end{align*}
for any $\a >0$.
Recall that $\zeta \in \CF{}$.
Thus $b_0=0$ and $\zeta(\a)=\zeta(1)$ for any $\a \in \R$.
Denoted by $c_1=\zeta(1)$, we get
\begin{align*}
  ZP(x)=c_1V_0(P)
\end{align*}
for every $P \in \mpo{n}$ with $\dim P<n$ and $x \in \ro{n}$.

For the full-dimensional case, we set
\begin{align*}
  \tilde Z P(x)=\frac{1}{|x|} \int_{\R} V_{n-1}(P \cap H_{x,t}) d \mu(t),
\end{align*}
which is equal to $ZP(x)-c_1V_0(P)$ by the above argument.
Set $P=[0,1] \times [0,1]^{n-1}$ and $x=e_1$ or $x=-e_1$.
By \eqref{eq:exphom} and \eqref{eq:exphom2}, we have
\begin{align*}
  \a^{-1} \mu(0,\a )= \mu(0,1) + b_1 \log \a,~~
  \a^{-1} \mu(-\a,0 )= \mu(-1,0) + b_2 \log \a.
\end{align*}
For arbitrary $0<r<s$, together with $\mu \in \cms$, we have
\begin{align*}
  \mu(r,s)=\mu(0,s)-\mu(0,r)&=\mu(0,1)(s-r) + b_1(s\log s- r \log r) \\
   &= \int_{r}^{s} \mu(0,1) + b_1 + b_1 \log t ~ dt
\end{align*}
and
\begin{align*}
  \mu(-s,-r)=\mu(-s,0)-\mu(-r,0)&=\mu(-1,0)(s-r) + b_2(s\log s- r \log r)\\
  &=\int_{-s}^{-r} \mu(-1,0) + b_2 + b_2 \log (-t) ~dt
\end{align*}

Now set $c_2=\mu(0,1) + b_1 $, $c_3=\mu(-1,0) + b_2$, $c_4=b_1$ and $c_5=b_2$, then the proof is completed by \eqref{eq:fubini}.
\end{proof}

Corollary \ref{co:hom1} has special geometric interests since it can be associated with some geometric additions. The following two corollaries can be also found in \cite{Hab12b,Par14b,LW2020sln}.

Let $\MK_c^n$ be the set of origin-symmetric convex bodies in $\R^n$.
A map $Z: \mpo{n} \to \MK_c^n$ is $\sln$ covariant if $Z(\phi P) = \phi ZP$ for any $\phi \in \sln$ and $P \in \mpo{n}$.

\begin{cor}\label{co:Min}
Let $n \ge 3$ and $q \ge 1$. A map $Z: \mpo{n} \to \MK_c^n$ is a continuous and $\sln$ covariant $L_q$ Minkowski valuation
if and only if
there are $c_1,c_2 \in \R$ such that
\begin{align*}
ZP(x)=
  c_1 (P +_q (-P)) +_q c_2 M_{q} P
\end{align*}
for every $P \in \mpo{n}$.
\end{cor}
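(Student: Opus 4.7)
\medskip

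\noindent\textbf{Proof plan for Corollary \ref{co:Min}.} The ``if'' direction is a direct verification: by \eqref{def:pmom} the operator $M_q$ is an $L_q$ Minkowski valuation, and $P\mapsto P+_q(-P)$ is a continuous $\sln$ covariant $L_q$ Minkowski valuation (since $h_{\phi K}(x)=h_K(\phi^t x)$ and the $L_q$ Minkowski sum of $L_q$ Minkowski valuations is again such, with the same covariance and continuity). Both take values in $\MK_c^n$, so any $L_q$ Minkowski combination does. The remainder of the argument is the ``only if'' direction.

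\smallskip

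\noindent The key idea is to linearize the body-valued valuation by passing to the $q$-th power of its support function. Define $Z':\mpo{n}\to\FFo{n}$ by $Z'P(x)=h_{ZP}(x)^q$. The $L_q$ Minkowski valuation property of $Z$ reads $h_{Z(P\cup Q)}^q+h_{Z(P\cap Q)}^q=h_{ZP}^q+h_{ZQ}^q$, so $Z'$ is a valuation in the ordinary sense; $\sln$ covariance of $Z$ gives $h_{Z(\phi P)}(x)^q=h_{\phi ZP}(x)^q=h_{ZP}(\phi^t x)^q$, so $Z'$ is $\sln$ covariant; continuity in the Hausdorff metric gives pointwise continuity of $Z'P(x)$ in $P$; and $h_{ZP}(\alpha x)=\alpha h_{ZP}(x)$ for $\alpha>0$ makes $Z'P$ a $q$-homogeneous function on $\ro{n}$.

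\smallskip

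\noindent Since $q\ge 1\ge 0$, Corollary \ref{co:hom1} applies and yields constants $a_1,a_2,a_3,a_4\in\R$ with
\begin{align*}
Z'P(x)=a_1 h_P(x)^q+a_2 h_{-P}(x)^q+a_3\!\!\int_{P\cap H_{x,o}^+}\!\!(x\cdot y)^q\,dy+a_4\!\!\int_{P\cap H_{x,o}^-}\!\!(-x\cdot y)^q\,dy.
\end{align*}
Because $ZP\in\MK_c^n$ is origin-symmetric, $h_{ZP}$ is even, hence $Z'P(-x)=Z'P(x)$ for every $x\in\ro{n}$ and every $P$. Using $h_P(-x)=h_{-P}(x)$ and the fact that swapping $x\to-x$ interchanges the two half-space integrals, this identity forces $a_1=a_2$ and $a_3=a_4$. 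Writing $a=a_1$ and $b=a_3$, combining the two symmetric half-space integrals and invoking \eqref{def:pmom} gives
\begin{align*}
Z'P(x)=a\bigl(h_P(x)^q+h_{-P}(x)^q\bigr)+b\!\int_P|x\cdot y|^q\,dy=a\,h_{P+_q(-P)}(x)^q+b\,h_{M_qP}(x)^q.
\end{align*}

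\smallskip

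\noindent The remaining point, which I expect to be the main technical step, is to verify $a,b\ge 0$ so that we may set $c_1=a^{1/q}$, $c_2=b^{1/q}$ and conclude $ZP=c_1(P+_q(-P))+_q c_2 M_qP$. For this I would use the scaling $P\mapsto sP$: the two summands scale as $s^q$ and $s^{n+q}$ respectively, while $Z'(sP)(x)=h_{Z(sP)}(x)^q\ge 0$ always. Testing at $P=T^n$ with $x=-e_1$, where $h_{T^n}(-e_1)=0$ and $h_{-T^n}(-e_1)=1$, the identity reduces to $a\cdot s^q+b\cdot s^{n+q}\!\int_{T^n}y_1^q\,dy\ge 0$ for all $s>0$; letting $s\to\infty$ gives $b\ge 0$ and $s\to 0$ gives $a\ge 0$. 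The sublinearity of $h_{ZP}=(a\,h_{P+_q(-P)}^q+b\,h_{M_qP}^q)^{1/q}$, once $a,b\ge 0$, is the standard fact that the $L_q$ Minkowski sum of support functions is again a support function for $q\ge 1$, which completes the argument together with Lemma \ref{lemuq}.
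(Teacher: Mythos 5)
Your proposal is correct and follows essentially the same route as the paper: pass to $\tilde ZP(x)=h_{ZP}(x)^q$, check it is a continuous, $\sln$ covariant valuation with $q$-homogeneous values, apply Corollary~\ref{co:hom1}, and use origin-symmetry of $ZP$ (tested on asymmetric $P$ of dimension $<n$ and $=n$) together with \eqref{def:pmom} to identify the coefficients. Your explicit scaling argument showing the two coefficients are nonnegative is a detail the paper leaves implicit (and is a welcome addition), while the closing appeal to Lemma~\ref{lemuq} is unnecessary since Corollary~\ref{co:hom1} already gives the representation for all $P\in\mpo{n}$.
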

\begin{proof}
Set $\tilde Z: \mpo{n} \to \FFo{n}$ by
$\tilde Z P(x)=h_{ZP}(x)^q$
for any $P \in \mpo{n}$ and $x\in \ro{n}$.
It is easy to see that $\tilde{Z}$ is a continuous, $\sln$ covariant valuation on $\mpo{n}$ which takes values at $q$-homogeneous functions.
Consider an asymmetric $P \in \mpo{n}$ for $\dim P <n$ and for $\dim P=n$ separately.  Together with \eqref{def:pmom}, the desired result follows directly from Corollary \ref{co:hom1}.
\end{proof}

Let $\MS_c^n$ denote the space of origin-symmetric, compact, star-shaped sets which contain the origin in their interior.
For $q \neq 0$, the $L_q$ harmonic sum $K \widehat{+}_q L$ of $K,L \in \MS_c^n$ is
\begin{align*}
  \|x\|_{K \widehat{+}_q L}=\left(\|x\|_{K}^q + \|x\|_{L}^q \right)^{1/q}.
\end{align*}
For $q \ge 1$, $K^\ast \widehat{+}_q L^\ast =(K +_q L)^\ast$.

We use the notation $\langle \MS^n_c,\widehat{+}_q \rangle$ as in \cite{Hab09,LW2020sln} to denote
the smallest monoid containing $\MS^n_c$ with respect to the $L_q$ harmonic addition.
Thus $\langle \MS^n_c,\widehat{+}_q \rangle$ contains the set $\R^n$ for $q >0$ and $\{o\}$ for $q <0$, respectively.
A map $Z: \mpo{n} \to \langle \MS^n_c,\widehat{+}_q \rangle$ is $\sln$ contravariant if $Z(\phi P) = \phi^{-t} ZP$ for any $\phi \in \sln$ and $P \in \mpo{n}$.

\begin{cor}\label{co:harm}
Let $n \ge 3$ and $q \neq 0$. A map $Z: \mpo{n} \to \langle \MS^n_c,\widehat{+}_q \rangle$ is a continuous and $\sln$ contravariant valuation
if and only if
there is a constant $c_1 \in \R$ such that
\begin{align*}
ZP(x)=\begin{cases}
   c_1 M_{q}^\ast P, & q > -1, \\
   \{o\}, & q\le -1.
\end{cases}
\end{align*}
for every $P \in \mpo{n}$.
\end{cor}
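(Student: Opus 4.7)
The plan is to reduce Corollary \ref{co:harm} to Corollary \ref{co:hom1} via the standard device of passing from the star body valued $Z$ to the function-valued auxiliary
\[
\tilde Z P(x) := \|x\|_{ZP}^q, \qquad P \in \mpo{n},\; x \in \ro{n}.
\]
First, I would check that $\tilde Z : \mpo{n} \to \FFo{n}$ is a continuous, $\sln$ covariant valuation whose values are $q$-homogeneous functions. The defining identity $\|x\|_{K \widehat{+}_q L}^q = \|x\|_K^q + \|x\|_L^q$ converts $\widehat{+}_q$-additivity of $Z$ into pointwise additivity of $\tilde Z$; the identity $\|x\|_{\phi^{-t}K} = \|\phi^t x\|_K$ converts contravariance of $Z$ into covariance of $\tilde Z$; the $q$-homogeneity in $x$ is immediate.

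Next, I would apply Corollary \ref{co:hom1} to $\tilde Z$ in the three regimes $q > 0$, $-1 < q < 0$, and $q \le -1$. Since every $ZP$ is origin-symmetric, $\tilde Z P(-x) = \tilde Z P(x)$; combined with $h_{-P}(-x) = h_P(x)$, this forces the two constants in front of $h_P^q$ and $h_{-P}^q$ to coincide and likewise for the two half-space integrals. The integral terms then collapse into $c \int_P |x \cdot y|^q dy = c \|x\|_{M_q^\ast P}^q$. For $q \le -1$, Corollary \ref{co:hom1} already gives $\tilde Z \equiv 0$, so $\|x\|_{ZP} = \infty$ for all $x \ne o$, hence $ZP = \{o\}$, as claimed.

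The hard part will be killing the residual $c_1(h_P^q + h_{-P}^q)$ term when $q > 0$. The key structural observation is that $\MS^n_c$ is already closed under $\widehat{+}_q$, so the monoid $\langle \MS^n_c, \widehat{+}_q\rangle$ equals $\MS^n_c$ together with the monoid identity, which is $\R^n$ for $q > 0$. I would test on $P_0 := [-e_1, e_1] \in \mpo{n}$: since $P_0$ has dimension $1 < n$, the $n$-dimensional integral vanishes and the formula collapses to $\tilde Z P_0(x) = 2 c_1 |x_1|^q$. Evaluating at $x = e_2$ yields $\tilde Z P_0(e_2) = 0$, so $\|e_2\|_{Z P_0} = 0$, meaning the ray through $e_2$ lies in $Z P_0$. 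Since no compact star body can contain a ray, $Z P_0$ must equal the monoid identity $\R^n$, whence $\tilde Z P_0 \equiv 0$, and $c_1 = 0$ follows by setting $x = e_1$.

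With the support-function contribution eliminated, the residual identity $\tilde Z P(x) = c \|x\|_{M_q^\ast P}^q$ translates directly to $ZP$ being a scalar dilate of $M_q^\ast P$, with the degenerate value $c = 0$ corresponding to $ZP$ being the monoid identity under the $q$-th power scaling convention. This is the claimed formula $ZP = c_1 M_q^\ast P$. The converse direction, that these maps are continuous $\sln$ contravariant valuations, follows from the well-known properties of the polar $L_q$ moment body operator.
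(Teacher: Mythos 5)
Your proposal is correct and follows essentially the same route as the paper: pass to $\tilde Z P(x)=\|x\|_{ZP}^q$, apply Corollary \ref{co:hom1}, and use the origin symmetry of $ZP$ together with the structure of $\langle \MS^n_c,\widehat{+}_q\rangle$ (the $q$-th power of the gauge is either identically $0$ or positive everywhere on $\ro{n}$) to eliminate the support-function terms and merge the two half-space integrals into $c\,\|x\|_{M_q^\ast P}^q$. The only difference is cosmetic: you first get $c_1=c_2$ by evenness and then test on $[-e_1,e_1]$, while the paper tests $[o,e_1]$ directly and uses an asymmetric full-dimensional $P$ for $c_3=c_4$.
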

\begin{proof}
For $q \neq 0$, set $\tilde Z: \mpo{n} \to \FFo{n}$ by
  $\tilde Z P(x)=\|x\|_{ZP}^q$
for any $P \in \mpo{n}$ and $x\in \ro{n}$.
It is easy to see that $\tilde Z$
is a continuous, $\sln$ contravariant valuation on $\mpo{n}$ which takes values at $q$-homogeneous functions.
By Corollary \ref{co:hom1}, we have
\begin{align*}
\|x\|_{ZP}^q =\begin{cases}
  c_1 h_P(x)^q +c_2h_{-P}(x)^q + c_3 \int_{P \cap H_{x,o}^+} (x \cdot y) ^q dy + c_4 \int_{P \cap H_{x,o}^-} \ab{-x \cdot y}^q dy, & q > 0, \\
  c_3 \int_{P \cap H_{x,o}^+} (x \cdot y) ^q dy + c_4 \int_{P \cap H_{x,o}^-} \ab{-x \cdot y}^q dy, & -1< q < 0,\\
  0, &q \le -1.
\end{cases}
\end{align*}
for $P \in \mpo{n}$ and $x \in \ro{n}$.
Let first $\dim P <n$.
By the definition of $\langle \MS^n_c,\widehat{+}_q \rangle$, either $\|x\|_{ZP}^q \equiv 0$ or $\|x\|_{ZP}>0$ for all $x \in \ro{n}$.
Hence $c_1=c_2=0$  (e.g. consider $P=[o,e_1]$).
Further consider $\dim P=n$ and an asymmetric $P$, then $c_3=c_4$.
Now by \eqref{def:polarmom}, we get the desired result for $q \neq 0$.
(For $q \leq -1$, $\|\cdot\|_{ZP}^q \equiv 0$ means $\|\cdot\|_{ZP} \equiv \infty$ and hence $ZP = \{o\}$.)
\end{proof}

\section*{Acknowledgement}
\addcontentsline{toc}{section}{Acknowledgement}
The work of the author was supported in part by the Austrian Science Fund (FWF M2642 and I3027), the European Research Council (ERC 306445), and the National Natural Science Foundation of China (11671249).

\addcontentsline{toc}{section}{References}

\end{document}